\theoremstyle{plain}
\newtheorem*{claim*}{Claim}
\newtheorem{thm}{Theorem}[section]
\newtheorem{corollary}[thm]{Corollary}
\newtheorem{lemma}[thm]{Lemma}
\newtheorem{prop}[thm]{Proposition}
\theoremstyle{definition}
\newtheorem{defn}[thm]{Definition}
\newtheorem{ex}[thm]{Example}
\newtheorem{remark}[thm]{Remark}
\newtheorem{con}[thm]{Construction}
\newtheorem{prob}[thm]{Open Problem}
\begin{document}
\subjclass[2010]{20M10, 20M18}
\title{\large{Right noetherian semigroups}}
\author{Craig Miller and Nik Ru{\v s}kuc}
\address{School of Mathematics and Statistics, St Andrews, Scotland, UK, KY16 NSS}
\email{cm380@st-andrews.ac.uk, nik.ruskuc@st-andrews.ac.uk}

\begin{abstract}
A semigroup $S$ is {\em right noetherian} if every right congruence on $S$ is finitely generated.
In this paper we present some fundamental properties of right noetherian semigroups, 
discuss how semigroups relate to their substructures with regard to the property of being right noetherian,
and investigate whether this property is preserved under various semigroup constructions.
\end{abstract}

\maketitle

\section{\large{Introduction}\nopunct}

A semigroup $S$ is said to be {\em right noetherian} if every right congruence on $S$ is finitely generated;
equivalently, $S$ satisfies the ascending chain condition on its right congruences.\footnote{There are other uses of the term `right noetherian' in semigroup theory:
it may be used to refer to semigroups for which every right ideal is finitely generated,
and to semigroups for which there is no infinite descending sequence with respect to the proper right-divisibility relation \cite{Dehornoy}.}
Left noetherian semigroups are defined in an analogous way.
In this paper we will only consider right noetherian semigroups.\par
For groups, being right noetherian means that every subgroup is finitely generated, 
and this is equivalent to being left noetherian.
Groups with this property are commonly known as {\em noetherian} groups and have received a fair amount of attention.
They were first studied systematically by Baer in \cite{Baer},
and it was shown in \cite{Hirsch} that polycyclic groups are noetherian.
Noetherian groups can have a rather complex structure, as explained in Chapter 9 of \cite{Olshanskii}.
In particular, it is shown there that there exist infinite simple noetherian groups that are periodic or, on the other extreme, torsion-free \cite[Theorem 28.1, Theorem 28.3]{Olshanskii}.
However, every noetherian group is finitely generated.\par
The study of right noetherian semigroups was initiated by Hotzel in \cite{Hotzel}.
He obtained particularly comprehensive results regarding weakly periodic semigroups (which include all regular semigroups);
in particular, he showed that if they are right noetherian, then they contain only finitely many right ideals and are finitely generated.
The question of whether every right noetherian semigroup is finitely generated has been considered by several authors (see \cite{Bergman}, \cite{Hotzel}, \cite{Kozhukhov2}) and remains an open problem.
Right noetherian semigroups were further studied by Kozhukhov in \cite{Kozhukhov1},
where he discusses the structure of right noetherian $0$-simple semigroups and right noetherian inverse semigroups.\par
The purpose of this paper is to investigate how the property of being right noetherian behaves under various semigroup constructions.
It is structured as follows.
In Section 2, we provide some fundamental facts about right noetherian semigroups; 
many of these results are folklore, but we gather them for completeness.
In Section 3, we investigate whether the property of being right noetherian is inherited by substructures in certain situations.
The remainder of the paper is concerned with various semigroup constructions.
Specifically, we consider the following:
direct products (Section 4), semidirect and wreath products (Section 5), $0$-direct unions and (strong) semilattices of semigroups (Section 6), and semigroup free products and monoid free products (Section 7).

\section{\large{Fundamental properties}\nopunct}

We begin this section by presenting several standard equivalent characterisations of right noetherian semigroups in terms of the ascending chain condition and maximal condition on right congruences,
paralleling the situation for rings.

\begin{prop}
\label{acc}
Let $S$ be a semigroup.  Then the following are equivalent:
\begin{enumerate}
\item $S$ is right noetherian;
\item $S$ satisfies the {\em ascending chain condition} on its right congruences: 
every infinite ascending chain $\rho_1\subseteq\rho_2\subseteq\dots$ of right congruences on $S$ eventually terminates;
\item every non-empty set of right congruences on $S$ has a maximal element.
\end{enumerate}
\end{prop}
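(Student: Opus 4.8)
The plan is to prove the three conditions equivalent via the cyclic chain of implications $(1)\Rightarrow(2)\Rightarrow(3)\Rightarrow(1)$, which is the usual route for noetherian-type conditions and keeps each individual argument short.

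For $(1)\Rightarrow(2)$, I would take an ascending chain $\rho_1\subseteq\rho_2\subseteq\cdots$ and form its union $\rho=\bigcup_i\rho_i$. The first task is to check that $\rho$ is again a right congruence: reflexivity, symmetry and right-compatibility pass to the union immediately, while transitivity follows by observing that any two related pairs already lie together in a single $\rho_i$ (take the larger index). Granting this, hypothesis $(1)$ supplies a finite generating set for $\rho$; each of the finitely many generators lies in some $\rho_i$, hence all lie in $\rho_N$ where $N$ is the maximum of these indices. Then $\rho=\rho_N$, forcing $\rho_n=\rho_N$ for every $n\ge N$, so the chain terminates.

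For $(2)\Rightarrow(3)$, I would argue by contradiction: if a non-empty family $\Sigma$ of right congruences had no maximal element, then starting from any $\rho_1\in\Sigma$ one could repeatedly choose a strictly larger member, producing a strictly ascending chain in $\Sigma$ that never stabilises, contradicting $(2)$. For $(3)\Rightarrow(1)$, given an arbitrary right congruence $\rho$, I would apply the maximal condition to the set $\Sigma$ of all finitely generated right congruences contained in $\rho$; this set is non-empty, since the diagonal relation belongs to it, so it has a maximal element $\sigma$. The crux is then to show $\sigma=\rho$: if there were a pair $(a,b)\in\rho\setminus\sigma$, the right congruence generated by $\sigma\cup\{(a,b)\}$ would be finitely generated, still contained in $\rho$, and strictly larger than $\sigma$, contradicting maximality. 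Hence $\rho=\sigma$ is finitely generated.

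I expect no serious obstacle, as each step is formal; the two points warranting a line of care are the verification in $(1)\Rightarrow(2)$ that the union of the chain is a right congruence (with the accompanying argument that finitely many generators can be pushed into a single term of the chain), and the fact that adjoining one pair to a finitely generated right congruence yields another finitely generated one in $(3)\Rightarrow(1)$. The construction of the strictly ascending chain in $(2)\Rightarrow(3)$ is the only place a choice principle is invoked, which I would mention but not dwell on.
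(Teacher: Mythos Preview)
Your proposal is correct and follows the same cyclic scheme $(1)\Rightarrow(2)\Rightarrow(3)\Rightarrow(1)$ as the paper, with the first two implications argued identically. The only point of departure is $(3)\Rightarrow(1)$: the paper proceeds by contrapositive, starting from a non-finitely generated right congruence $\rho$ and successively adjoining pairs $(a_i,b_i)\in\rho$ to build a strictly increasing sequence $\rho_1\subset\rho_2\subset\cdots$ of finitely generated right congruences, thereby exhibiting a non-empty family with no maximal element; you instead apply the maximal condition directly to the family of finitely generated right congruences contained in a given $\rho$ and show the maximal member must equal $\rho$. Both arguments are standard and equally short; yours avoids an explicit inductive construction, while the paper's version makes the failure of the maximal condition more visibly concrete.
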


\begin{proof}
$(1)\Rightarrow(2)$.  Consider an infinite ascending chain 
$$\rho_1\subseteq\rho_2\subseteq\dots$$ of right congruences on $S.$
Let $\rho$ be the right congruence $\bigcup_{i=1}^{\infty}\rho_i.$
Since $S$ is right noetherian, we have that $\rho$ is generated by some finite set $X.$
Let $k$ be minimal such that $X\subseteq\rho_k.$
Then $\rho\subseteq\rho_k.$ 
But $\rho_k\subseteq\rho,$ so $\rho=\rho_k.$
Hence, we have that $\rho_n=\rho_k$ for all $n\geq k.$\par
$(2)\Rightarrow(3)$.  Suppose there exists a non-empty set $F$ of right congruences on $S$ with no maximal element.
Choose $\rho_1\in F.$  Since $\rho_1$ is not maximal, there exists $\rho_2\in F$ with $\rho_1\subset\rho_2.$
Since $\rho_2$ is not maximal, there exists $\rho_3\in F$ with $\rho_2\subset\rho_3.$
Continuing in this way, there exists an infinite strictly ascending chain
$$\rho_1\subset\rho_2\subset\rho_3\subset\dots$$
of right congruences on $S.$\par
$(3)\Rightarrow(1)$.  Suppose there exists a non-finitely generated right congruence $\rho$ on $S.$
Choose $(a_1, b_1)\in\rho,$ and let $\rho_1$ be the right congruence on $S$ generated by $(a_1, b_1).$
Since $\rho$ is not finitely generated, we have that $\rho_1\neq\rho.$ 
Choose $(a_2, b_2)\in\rho\setminus\rho_1,$ and let $\rho_2$ be the right congruence on $S$ generated by $\{(a_1, b_1), (a_2, b_2)\}.$
We have that $\rho_1\subset\rho_2.$
Continuing in this way, we have a non-empty set $\{\rho_1, \rho_2, \dots\}$ of right congruences on $S$ with no maximal element.
\end{proof}

Now, let $S$ be a semigroup and let $X\subseteq S\times S$.  We introduce the notation
$$\overline{X}=X\cup\{(x, y) : (y, x)\in X\},$$
which will be used throughout the paper.
For $a, b\in S$, an {\em $X$-sequence connecting} $a$ and $b$ is any sequence
$$a=x_1s_1, \; y_1s_1=x_2s_2, \; y_2s_2=x_3s_3, \; \dots, \; y_ks_k=b,$$
where $(x_i, y_i)\in\overline{X}$ and $s_i\in S^1$ for $1\leq i\leq k.$\par
We now introduce the following definition.

\begin{defn}
Let $S$ be a semigroup, let $X\subseteq S\times S,$ and let $s, t\in S.$
We say that $(s, t)$ is a {\em consequence} of $X$ if either $s=t$ or there exists an $X$-sequence connecting $s$ and $t.$
\end{defn}

We have the following basic lemma.

\begin{lemma}
Let $S$ be a semigroup, let $X\subseteq S\times S,$ let $\rho$ be a right congruence on $S$ generated by $X,$ and let $s, t\in S$.
Then $s\,\rho\,t$ if and only if $(s, t)$ is a consequence of $X$.
\end{lemma}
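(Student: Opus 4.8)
The plan is to show that the relation defined by "being a consequence of $X$" is precisely the right congruence generated by $X$. Write $\sigma$ for the set of pairs $(s,t)$ that are consequences of $X$. The strategy is to prove two things: first, that $\sigma$ is itself a right congruence containing $X$, and second, that $\sigma$ is contained in every right congruence containing $X$. Together these force $\sigma=\rho$, which is exactly the claimed biconditional once we unwind that $s\,\rho\,t$ means $(s,t)\in\rho$.

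**For the first part**, I would verify that $\sigma$ is an equivalence relation. Reflexivity is built into the definition ($s=t$ is allowed). For symmetry, observe that reversing an $X$-sequence connecting $a$ and $b$ yields an $X$-sequence connecting $b$ and $a$; this is where the symmetrised set $\overline{X}$ does the work, since each step $(x_i,y_i)\in\overline{X}$ can be read backwards as $(y_i,x_i)\in\overline{X}$. For transitivity, concatenate an $X$-sequence from $a$ to $b$ with one from $b$ to $c$ to obtain one from $a$ to $c$. Next, right-compatibility: given an $X$-sequence $a=x_1s_1,\ y_1s_1=x_2s_2,\ \dots,\ y_ks_k=b$ and any $u\in S$, multiply every equation on the right by $u$; since each $s_iu\in S^1$ (as $s_i\in S^1$ and $u\in S\subseteq S^1$, using that $S^1$ is closed under multiplication), the result $au=x_1(s_1u),\ \dots,\ y_k(s_ku)=bu$ is a valid $X$-sequence connecting $au$ and $bu$. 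Finally $X\subseteq\sigma$: for $(x,y)\in X$ take the one-step sequence with $s_1=1\in S^1$.

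**For the second part**, let $\tau$ be any right congruence containing $X$; I must show every consequence of $X$ lies in $\tau$. If $s=t$ this is reflexivity of $\tau$. Otherwise, take an $X$-sequence connecting $s$ and $t$ and argue step by step. Each generating pair $(x_i,y_i)\in\overline{X}$ lies in $\tau$ (as $X\subseteq\tau$ and $\tau$ is symmetric), so by right-compatibility $(x_is_i,\,y_is_i)\in\tau$; chaining these pairs together with the linking equalities $y_is_i=x_{i+1}s_{i+1}$ and using transitivity of $\tau$ gives $(s,t)\in\tau$. One mild technical point is handling the factor $s_i=1$, i.e.\ when $s_i\in S^1\setminus S$: here $x_is_i=x_i$ and $y_is_i=y_i$, and the pair $(x_i,y_i)\in\tau$ directly, so the compatibility step is vacuous but harmless.

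**The main obstacle** I anticipate is purely bookkeeping rather than conceptual: carefully managing the adjoined identity $S^1$ so that the right-multiplication arguments remain valid in both directions, and making the ``chaining'' of a sequence into a single transitivity argument rigorous without an unwieldy induction. A clean way to package the second part is to induct on the length $k$ of the $X$-sequence, which keeps the linking equalities and the repeated application of transitivity under control.
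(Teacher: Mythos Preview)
Your argument is correct and is the standard one: verify that the relation ``is a consequence of $X$'' is a right congruence containing $X$, and that it is contained in every such right congruence. The paper itself states this lemma without proof, treating it as folklore (it is introduced simply as ``the following basic lemma''), so there is nothing to compare against; your write-up would serve perfectly well as the omitted justification.
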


The following lemma is a specialisation of a well-known fact from general algebra (see \cite[Theorem 2.5.5]{Burris} for more details), and will be used repeatedly throughout the paper.

\begin{lemma}
\label{finite subset}
Let $S$ be a semigroup, let $\rho$ be a finitely generated right congruence on $S,$ and let $X$ be a generating set for $\rho.$
Then there exists a finite set $X^{\prime}\subseteq X$ such that $\rho$ is generated by $X^{\prime}.$
\end{lemma}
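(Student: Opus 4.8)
The plan is to exploit the characterisation of membership in $\rho$ furnished by the preceding lemma --- namely that $s\,\rho\,t$ holds precisely when $(s,t)$ is a consequence of a generating set --- together with the fact that each $X$-sequence has finite length.

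First I would use the hypothesis that $\rho$ is finitely generated to fix a finite generating set $F=\{(a_1,b_1),\dots,(a_n,b_n)\}$ for $\rho$. Since $X$ also generates $\rho$ and each $(a_i,b_i)\in\rho$, the preceding lemma tells me that every $(a_i,b_i)$ is a consequence of $X$: either $a_i=b_i$, or there is an $X$-sequence connecting $a_i$ and $b_i$. Such a sequence has the form $a_i=x_1s_1,\ y_1s_1=x_2s_2,\ \dots,\ y_ks_k=b_i$ with each $(x_j,y_j)\in\overline{X}$, and, crucially, it is finite. Hence only finitely many pairs of $\overline{X}$ --- and therefore only finitely many pairs of $X$ (replacing any reversed pair $(y_j,x_j)$ by the original $(x_j,y_j)\in X$) --- are needed to witness that $(a_i,b_i)$ is a consequence. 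I would collect these into a finite set $X_i\subseteq X$.

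Next I would set $X^{\prime}=\bigcup_{i=1}^n X_i$, which is a finite union of finite sets and so is a finite subset of $X$. To show that $X^{\prime}$ generates $\rho$, I would establish two inclusions; write $\rho^{\prime}$ for the right congruence generated by $X^{\prime}$. Since $X^{\prime}\subseteq X$, we have $\rho^{\prime}\subseteq\rho$. For the reverse inclusion, each generator $(a_i,b_i)$ of $\rho$ is a consequence of $X_i$, hence of $X^{\prime}$, because the witnessing $X$-sequence uses only pairs drawn from $X_i\subseteq X^{\prime}$; applying the preceding lemma again, now with $X^{\prime}$ in place of $X$, gives $(a_i,b_i)\in\rho^{\prime}$. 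Thus $F\subseteq\rho^{\prime}$, and since $\rho$ is generated by $F$ we conclude $\rho\subseteq\rho^{\prime}$. Combining the inclusions yields $\rho=\rho^{\prime}$, as required.

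The only point demanding care --- and the main obstacle, such as it is --- is the bookkeeping at the passage from $\overline{X}$ to $X$: each step of an $X$-sequence is governed by a pair of $\overline{X}$, which may be a pair of $X$ or the reverse of one, so I must be sure to place the corresponding element of $X$ itself into $X_i$, so that the very same sequence remains a valid $X^{\prime}$-sequence. Everything else is the standard observation that a finitely generated object retains a finite generating subset inside any prescribed generating set, driven entirely by the finiteness of the individual $X$-sequences.
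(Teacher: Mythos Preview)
Your argument is correct. The paper itself does not supply a proof of this lemma; it merely remarks that the statement is a specialisation of a well-known fact from universal algebra and cites \cite[Theorem 2.5.5]{Burris}. Your proof is precisely that standard argument, carried out explicitly in the language of $X$-sequences and consequences developed just before the lemma: pick a finite generating set $F$, witness each pair of $F$ by a finite $X$-sequence, and collect the finitely many pairs of $X$ used. The care you take over $\overline{X}$ versus $X$ is appropriate and handled correctly. There is nothing to add.
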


\begin{defn}
A semigroup $S$ is said to be {\em weakly right noetherian} if every right ideal of $S$ is finitely generated.
\end{defn}

\begin{remark}
Being weakly right noetherian for a semigroup $S$ is equivalent to $S$ satisfying the ascending chain condition on right ideals.
\end{remark}

\begin{lemma}
\label{weakly}
Let $S$ be a semigroup.  If $S$ is right noetherian, then $S$ is also weakly right noetherian.
\end{lemma}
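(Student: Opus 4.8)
The plan is to lean on the two characterisations already in place: by Proposition~\ref{acc} a semigroup is right noetherian exactly when it satisfies the ascending chain condition on right congruences, and by the preceding Remark it is weakly right noetherian exactly when it satisfies the ascending chain condition on right ideals. So it suffices to deduce the latter chain condition from the former, and the device linking them is the \emph{Rees right congruence}. To each right ideal $I$ of $S$ I would associate the relation $\rho_I = (I\times I)\cup\Delta_S$, where $\Delta_S=\{(s,s):s\in S\}$ is the diagonal. First I would verify that $\rho_I$ is genuinely a right congruence: reflexivity and symmetry are immediate, transitivity is a short case check, and right compatibility holds precisely because $I$ is a right ideal, so that $a,b\in I$ forces $as,bs\in I$ for every $s\in S$.

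Next, starting from an ascending chain $I_1\subseteq I_2\subseteq\cdots$ of right ideals, I would note that $I_m\subseteq I_n$ gives $I_m\times I_m\subseteq I_n\times I_n$ and hence $\rho_{I_m}\subseteq\rho_{I_n}$. Thus $\rho_{I_1}\subseteq\rho_{I_2}\subseteq\cdots$ is an ascending chain of right congruences, which by the ascending chain condition stabilises: there is some $k$ with $\rho_{I_n}=\rho_{I_k}$ for all $n\geq k$.

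The heart of the argument is to pass back from equality of congruences to equality of ideals. For right ideals $I\subseteq J$ I would show that $\rho_I=\rho_J$ forces $I=J$ whenever $|J|\geq 2$: if there were $x\in J\setminus I$, I could pick $y\in J$ with $y\neq x$, and then $(x,y)\in J\times J\subseteq\rho_J$ while $(x,y)\notin\rho_I$ (as $x\neq y$ and $x\notin I$), contradicting $\rho_I=\rho_J$. Applied along the stabilised tail this yields $I_n=I_k$ for all $n\geq k$, so the original chain of ideals terminates and $S$ is weakly right noetherian.

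I expect the only real nuisance to be the degenerate cases in which $\rho_I$ collapses to $\Delta_S$, namely $|I|\leq 1$; here the recovery step above does not apply, but such ideals are trivially finitely generated (a singleton or the empty set serves as its own generating set), so a brief separate remark disposes of them. As an alternative that produces the generating set directly, one can work with a single right ideal $I$: since $\rho_I$ is finitely generated by hypothesis and is generated by $I\times I$, Lemma~\ref{finite subset} supplies a finite $X'\subseteq I\times I$ generating $\rho_I$, and the finite set $A$ of all coordinates occurring in $X'$ then generates $I$ as a right ideal. The key observation for this variant is that any $X'$-sequence connecting $c\in I$ to another element of $I$ begins with $c=x_1s_1$ where $x_1\in A$ and $s_1\in S^1$, whence $c\in AS^1$.
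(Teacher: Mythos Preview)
Your proposal is correct. Your \emph{alternative} route---form the Rees right congruence $\rho_I$, note it is generated by $I\times I$, extract a finite generating set $X'\subseteq I\times I$ via Lemma~\ref{finite subset}, and observe that the coordinates of $X'$ generate $I$ as a right ideal because any $X'$-sequence from $a\in I$ begins $a=x_1s_1$---is exactly the paper's proof. (The paper tacitly assumes $|I|\geq2$ when it picks $b\in I$ with $b\neq a$; you are right to flag the degenerate case separately.)

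Your \emph{main} route is genuinely different: rather than producing a generating set, you transport an ascending chain of right ideals to an ascending chain of Rees right congruences and pull stabilisation back via the injectivity of $I\mapsto\rho_I$ on ideals with at least two elements. This is a clean, conceptual argument that exploits the ACC characterisations on both sides and avoids any explicit generating-set bookkeeping. The paper's approach, by contrast, is constructive: it hands you the finite generating set for $I$ directly, which is occasionally useful downstream (e.g.\ in the proof of Corollary~\ref{indecomposable}). One small wrinkle in your write-up: in the degenerate case your justification (``such ideals are trivially finitely generated'') belongs to the direct approach, not the ACC one; for the ACC argument you instead want to note that if $\rho_{I_k}=\Delta_S$ then $\rho_{I_n}=\Delta_S$ forces $|I_n|\leq1$ for all $n\geq k$, whence the chain of ideals stabilises for trivial size reasons.
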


\begin{proof}
Let $I$ be a right ideal of $S.$
Consider the Rees right congruence $\rho_I$ on $S$ given by
$$a\,\rho_I\,b\iff a=b\text{ or }a, b\in I.$$
Since $S$ is right noetherian, $\rho_I$ is generated (as a right congruence) by some finite set $X\subseteq I\times I.$
We claim that $I$ is generated (as a right ideal) by the finite set 
$$U=\{x\in I : (x, y)\in\overline{X}\text{ for some }y\in I\}.$$
Indeed, let $a\in I,$ and choose $b\in I$ with $a\neq b.$
Since $a\,\rho_I\,b,$ there exists an $X$-sequence connecting $a$ and $b.$
In particular, there exist some $x\in U$ and $s\in S^1$ such that $a=xs.$
\end{proof}

\begin{corollary}\cite[Lemma 1.6]{Hotzel}
\label{weaklycorollary}
Let $S$ be a right noetherian semigroup.  Then $S$ contains no infinite set of pairwise incomparable right ideals.
\end{corollary}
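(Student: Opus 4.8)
The plan is to route the corollary through Lemma~\ref{weakly}. Since $S$ is right noetherian it is weakly right noetherian, so $S$ satisfies the ascending chain condition on right ideals and, moreover, every right ideal is a \emph{finite} union of principal right ideals $aS^1$. Suppose for contradiction that $S$ has an infinite set $I_1, I_2, \dots$ of pairwise incomparable right ideals. The natural idea is to manufacture an infinite strictly ascending chain of right ideals and contradict the ascending chain condition; the obvious candidate is the chain of partial unions $I_1 \subseteq I_1\cup I_2 \subseteq \dots$, each term being again a right ideal.

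I would first dispose of the case of \emph{principal} right ideals, where this idea works cleanly. If $a_1S^1, a_2S^1, \dots$ were pairwise incomparable, note that $a_{n+1}\in a_{n+1}S^1$; were $a_{n+1}S^1 \subseteq a_1S^1\cup\dots\cup a_nS^1$, then $a_{n+1}$ would lie in some $a_iS^1$, forcing $a_{n+1}S^1\subseteq a_iS^1$ and contradicting incomparability. Hence the partial unions ascend strictly, contradicting the ascending chain condition. So $S$ admits no infinite antichain of principal right ideals; together with the ascending chain condition (a special case of which forbids infinite ascending chains of principal right ideals), this says precisely that the principal right ideals of $S$---equivalently, $S$ under the right-divisibility preorder $a\preceq b\iff b\in aS^1$---form a well-quasi-order.

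The main obstacle is the passage from principal to arbitrary right ideals, where the partial-union argument breaks down: a right ideal contained in a finite union $I_{m_1}\cup\dots\cup I_{m_r}$ need not be contained in any single $I_{m_j}$, so the chain of partial unions may stabilise even though the $I_n$ form an antichain. Concretely, since $\bigcup_n I_n$ is finitely generated it already equals some finite subunion $I_{m_1}\cup\dots\cup I_{m_r}$, whence every $I_n$ lies in this finite union---yet possibly in no single member---and the bare chain construction stalls. This is exactly the step that must be governed by a well-quasi-order argument rather than by a direct chain. (The contrast with rings is instructive: there the analogous argument fails already for principal ideals, since membership in a \emph{sum} of ideals does not localise to one summand, which is why a noetherian ring may have infinitely many pairwise incomparable ideals.)

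To close this gap I would invoke Higman's lemma. Every right ideal is a finite union of principal ones, i.e.\ a finitely generated up-set of the well-quasi-order $(S,\preceq)$, generated by its finite antichain of $\preceq$-minimal elements (these generators exist because there are no infinite ascending chains). Thus the right ideals of $S$ embed, as a partially ordered set, into the finite subsets of the well-quasi-order $(S,\preceq)$ under the induced domination ordering, and Higman's lemma guarantees that the latter contains no infinite antichain; this contradicts the existence of the $I_n$. The delicate points are matching the inclusion ordering of right ideals with the \emph{correct} domination ordering on their finite generating sets, and verifying that each right ideal really is generated by a finite antichain of $\preceq$-minimal elements. One could instead inline the minimal-bad-sequence proof of Higman's lemma directly in this setting.
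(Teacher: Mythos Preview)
The paper gives no proof of this corollary --- it simply cites Hotzel --- so there is nothing in the paper to compare against, and your proposal must stand on its own. Your first stage, establishing that $(S,\preceq)$ is a well-quasi-order (no infinite ascending chain of principal right ideals by ACC, no infinite antichain of principal right ideals via the partial-union argument), is correct and cleanly argued.

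The genuine gap is the appeal to Higman's lemma for the passage to arbitrary right ideals. The ordering on finite generating antichains that corresponds to inclusion of right ideals is the \emph{Smyth} (upper) preorder, $A\sqsubseteq B\iff\uparrow_\preceq A\subseteq\uparrow_\preceq B\iff$ every $a\in A$ has some $b\in B$ with $b\preceq a$. Higman's lemma shows that finite subsets of a WQO are well-quasi-ordered by the \emph{embedding} order (hence also by the Hoare lower preorder), but neither of these implies comparability in the Smyth order, and for a general WQO $X$ it is simply \emph{false} that the upward-closed subsets have no infinite antichain. Rado's poset $R=\{(i,j):i<j\}$, ordered by $(i,j)\leq(k,l)$ iff ($i=k$ and $j\leq l$) or $j\leq k$, is a WQO, yet the upward-closed sets
\[
U_n=\{(m,k)\in R:\ m\neq n,\ k>n\}=\uparrow\{(0,n{+}1),(1,n{+}1),\dots,(n{-}1,n{+}1)\}\qquad(n\geq 1)
\]
are pairwise incomparable (e.g.\ $(n{-}1,n{+}1)\in U_n\setminus U_{n-1}$ and $(0,n)\in U_{n-1}\setminus U_n$). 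This is precisely the gap between well-quasi-orders and better-quasi-orders, and no minimal-bad-sequence rewrite of Higman will close it. Since your argument uses only that $(S,\preceq)$ is a WQO --- equivalently, only weak right noetherianness --- it cannot succeed as written; a correct proof must bring in further structure, either the full right-noetherian hypothesis on right congruences or some semigroup-specific feature of the divisibility preorder that excludes Rado-type behaviour.
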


\begin{defn}
Let $S$ be a semigroup.  An element $s\in S$ is said to be {\em decomposable} if $s\in S^2.$
An element is {\em indecomposable} if it is not decomposable.
\end{defn}

\begin{corollary}
\label{indecomposable}
Let $S$ be a right noetherian semigroup.  Then $S$ has only finitely many indecomposable elements.
\end{corollary}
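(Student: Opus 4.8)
The plan is to reduce the statement to Lemma~\ref{weakly}, which guarantees that $S$ is weakly right noetherian, together with the elementary observation that every indecomposable element of $S$ must lie in any generating set for $S$ viewed as a right ideal. Since $S$ is a right ideal of itself, this will immediately bound the indecomposable elements by a finite set.

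Concretely, I would first note that $S$ is a right ideal of $S$, so by Lemma~\ref{weakly} it is finitely generated as a right ideal; let $A$ be a finite generating set. Recalling that the right ideal generated by $A$ equals $A\cup AS$, we have $S=A\cup AS$. The key step is then to show that every indecomposable element belongs to $A$: if $s\in S$ is indecomposable, then $s\notin S^2$, and since $A\subseteq S$ gives $AS\subseteq S^2$, we get $s\notin AS$. As $s\in S=A\cup AS$, it follows that $s\in A$. Because $A$ is finite, $S$ can have only finitely many indecomposable elements.

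The argument is short, and there is no substantial computational obstacle. The one point that carries the weight is the realisation that indecomposable elements are forced into every right-ideal generating set; this is precisely what converts the finite generation of the right ideal $S$, supplied by Lemma~\ref{weakly}, into the desired finiteness. Identifying this connection is the crux, after which the verification is routine.
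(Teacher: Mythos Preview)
Your proposal is correct and follows essentially the same approach as the paper: the paper uses Lemma~\ref{weakly} to write $S=XS^1$ for a finite set $X$ and observes that $S\setminus X\subseteq S^2$, which is exactly your argument that every indecomposable element must lie in the finite generating set $A$ (your $A\cup AS$ is the paper's $XS^1$).
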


\begin{proof}
Since $S$ is weakly right noetherian, $S$ is finitely generated as a right ideal; 
that is, there exists a finite set $X\subseteq S$ such that $S=XS^1.$
Therefore, we have that $S\!\setminus\!X\subseteq S^2,$ so $S$ has at most $|X|$ indecomposable elements.
\end{proof}

\begin{remark}
The class of right noetherian semigroups is strictly contained in the class of weakly right noetherian semigroups.
Indeed, every group is weakly right noetherian, but there exist groups that are not right noetherian.
\end{remark}

It is clear that the property of being right noetherian is a finiteness condition for semigroups:

\begin{lemma}
Finite semigroups are right noetherian.
\end{lemma}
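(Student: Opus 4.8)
The plan is to exploit the most basic finiteness at hand: if $S$ is finite, then the Cartesian square $S\times S$ is a finite set, and every right congruence on $S$ is by definition a subset of $S\times S$. Hence every right congruence is itself a finite set. The crux of the argument is then the trivial observation that a finite set generates itself: if $\rho$ is a right congruence and $\rho$ is finite, then $\rho$ is a generating set (as a right congruence) of itself, and so $\rho$ is finitely generated. Since this holds for every right congruence on $S$, the semigroup $S$ is right noetherian by definition.

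First I would fix a finite semigroup $S$ and an arbitrary right congruence $\rho$ on $S$. Next I would note $\rho\subseteq S\times S$ and $|S\times S|=|S|^2<\infty$, so $\rho$ is finite. Finally I would invoke the fact that the right congruence generated by a set $X$ always contains $X$ and is the smallest such right congruence, so the right congruence generated by $\rho$ is $\rho$ itself; taking $X=\rho$ exhibits a finite generating set. This shows $\rho$ is finitely generated, completing the proof.

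As an alternative one could argue via Proposition~\ref{acc}: any ascending chain $\rho_1\subseteq\rho_2\subseteq\dots$ of right congruences is an ascending chain of subsets of the finite set $S\times S$, and such a chain must eventually stabilise, whence the ascending chain condition holds and $S$ is right noetherian. I would favour the direct argument, since it is slightly shorter and avoids appealing to the characterisation.

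There is essentially no obstacle here, as the result is an immediate consequence of finiteness; the only point requiring the briefest care is to phrase the statement \emph{``a finite set is finitely generated''} correctly, namely that a finite set serves as its own finite generating set, rather than conflating the right congruence $\rho$ with a generating set for it.
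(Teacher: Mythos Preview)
Your argument is correct; the paper itself offers no proof for this lemma, stating only that it is ``clear'' that being right noetherian is a finiteness condition. Your direct observation that every right congruence on a finite semigroup is a finite subset of $S\times S$, and hence serves as its own finite generating set, is exactly the kind of immediate justification the paper leaves implicit.
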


For commutative semigroups, one-sided congruences coincide with (two-sided) congruences.
Right noetherian commutative semigroups have the following characterisation, which is due to Budach \cite{Budach} (necessity) and R{\'e}dei \cite{Redei} (sufficiency).

\begin{thm}
\label{commutative}
Let $S$ be a commutative semigroup.  Then $S$ is right noetherian if and only if $S$ is finitely generated.
\end{thm}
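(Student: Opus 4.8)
The statement combines two classical facts: finite generation implies the noetherian condition (R\'edei) and its converse (Budach). Throughout I use that for commutative $S$ the right congruences are exactly the (two-sided) congruences, so ``right noetherian'' means the ACC holds for \emph{all} congruences. I would treat the two implications separately, expecting the converse (necessity) to be the substantial one.

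For sufficiency (finitely generated $\Rightarrow$ right noetherian) the plan is to reduce to the free case. Write $S=F/\sigma$, where $F$ is the free commutative semigroup on a finite alphabet and $\sigma$ is a congruence. By the correspondence theorem, congruences on $S$ are exactly the congruences on $F$ containing $\sigma$, and this bijection is inclusion-preserving; hence an ascending chain of congruences on $S$ lifts to an ascending chain on $F$, and it suffices to prove that $F$ (equivalently the free commutative monoid $\mathbb{N}^n$) is right noetherian. This last point is R\'edei's theorem, which I would prove using the well-quasi-order structure of $\mathbb{N}^k$ (Dickson's lemma). A congruence $\rho$ on $\mathbb{N}^n$ is a submonoid of $\mathbb{N}^{2n}$, and I would attach to it the binomial ideal $I_\rho=\langle x^a-x^b:(a,b)\in\rho\rangle$ of the polynomial ring $k[x_1,\dots,x_n]$. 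The map $\rho\mapsto I_\rho$ is inclusion-preserving and injective, so the ACC for congruences on $\mathbb{N}^n$ follows from the Hilbert basis theorem, after checking that a finite ideal generating set yields a finite congruence generating set (via Lemma \ref{finite subset}).

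For necessity (right noetherian $\Rightarrow$ finitely generated) I would first extract the structural consequences already available: by Lemma \ref{weakly} and Corollary \ref{indecomposable}, $S$ is weakly right noetherian, it has only finitely many indecomposable elements $X$, and $S=XS^1$ with the ACC on principal ideals. The key observation is that for any $s\in S$, repeatedly factoring $s=x_1s_1$, $s_1=x_2s_2,\dots$ with $x_i\in X$ produces an ascending chain $sS^1\subseteq s_1S^1\subseteq s_2S^1\subseteq\cdots$ of principal ideals; by the ACC this chain stabilises, and the factoring can fail to terminate inside $\langle X\rangle$ only at an element $t$ admitting a local identity, i.e.\ $t=tu$ for some $u\in S$. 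Such elements are exactly those lying in the subgroups of $S$. To deal with them I would pass to the semilattice $Y$ of archimedean components of $S$: since $Y$ is a noetherian quotient of $S$ it is a noetherian semilattice and hence finite, and one then shows that each maximal subgroup inherits the noetherian property and is therefore a finitely generated group. Assembling finitely many finitely generated components yields a finite generating set for $S$.

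The main obstacle is this group part of the necessity direction: the argument via indecomposables alone breaks down precisely inside subgroups, since a nontrivial group has no indecomposable elements. The crux is therefore to transfer the ACC on congruences of $S$ down to the coset congruences of each maximal subgroup, concluding that these subgroups are noetherian and hence finitely generated, and to control how the finitely many archimedean components are glued. By contrast, the free-monoid input to sufficiency, though nontrivial, is the well-isolated and classical R\'edei/Dickson ingredient.
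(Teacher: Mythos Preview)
The paper does not prove this theorem: it is stated with attribution to Budach (necessity) and R\'edei (sufficiency) and used thereafter as a black box. So there is no proof in the paper to compare against, and your outline is being measured only against correctness.

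Your sufficiency direction is fine. Reducing to the free commutative monoid via the correspondence theorem and then pushing the ACC on congruences through the injective, order-preserving map $\rho\mapsto I_\rho$ into ideals of a polynomial ring is a standard and valid route to R\'edei's theorem.

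Your necessity sketch has two real gaps. First, the inference ``$t=tu$ for some $u\in S$, hence $t$ lies in a subgroup'' is false: in $(\mathbb{N}_0,+)$ every element satisfies $t=t+0$, yet only $0$ lies in a subgroup. What the stabilisation of the principal-ideal chain actually gives you is that the tail of the factoring lives in a single $\mathcal{H}$-class, but an $\mathcal{H}$-class of a commutative semigroup need not be a group, so passing to maximal subgroups does not by itself account for all such elements. Second, and more structurally, your plan ``finitely many archimedean components, each with finitely generated maximal subgroup, hence $S$ finitely generated'' conflates the components with their subgroups. Archimedean components are \emph{subsemigroups} of $S$, and by Example~\ref{freecommex} subsemigroups of right noetherian commutative semigroups need not be right noetherian; so you cannot simply say each component is finitely generated. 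Budach's argument requires showing that the ACC on congruences of $S$ forces a usable finiteness condition on each archimedean component, not merely on the groups sitting inside them, and that is where the substance lies. (A minor related point: ``noetherian semilattice $\Rightarrow$ finite'' is itself the semilattice case of the theorem you are proving, so it should be established directly rather than invoked.) Your diagnosis of where the difficulty sits is accurate, but the proposed resolution via maximal subgroups alone does not close it.
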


For groups, being right noetherian is equivalent to every subgroup being finitely generated (or, equivalently, satisfying the ascending chain condition on subgroups).

\begin{prop}
\label{group}
A group $G$ is right noetherian if and only if every subgroup of $G$ is finitely generated.
\end{prop}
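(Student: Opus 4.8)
The plan is to establish a correspondence between subgroups of $G$ and right congruences on $G$, and then to show that finite generation of subgroups translates precisely into finite generation of the associated right congruences. For a group, the right congruences are exactly the partitions into right cosets of subgroups: given a subgroup $H\leq G$, one defines the right congruence $\rho_H$ by $a\,\rho_H\,b\iff ab^{-1}\in H$ (equivalently $Ha=Hb$), and conversely every right congruence $\rho$ on $G$ arises this way from the subgroup $H=\{g\in G: g\,\rho\,1\}$ consisting of the $\rho$-class of the identity. First I would verify that $H\mapsto\rho_H$ is an inclusion-preserving bijection between subgroups of $G$ and right congruences on $G$, so that chains of subgroups correspond to chains of right congruences and vice versa.

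With this correspondence in hand, I would prove both directions. For the forward direction, suppose $G$ is right noetherian and let $H\leq G$. Then $\rho_H$ is finitely generated as a right congruence, say by a finite set $X\subseteq G\times G$. Using the $X$-sequence description of consequences (the generating-set machinery from the earlier lemma), each generating pair $(x_i,y_i)$ contributes a group element $x_iy_i^{-1}\in H$, and I would argue that the finite set of such elements generates $H$ as a subgroup. Intuitively, membership $g\in H$ means $g\,\rho_H\,1$, which is witnessed by an $X$-sequence, and unwinding that sequence expresses $g$ as a product of the elements $x_iy_i^{\pm 1}$.

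For the reverse direction, suppose every subgroup of $G$ is finitely generated, and let $\rho$ be an arbitrary right congruence on $G$. Writing $\rho=\rho_H$ for its associated subgroup $H$, I would take a finite generating set $\{h_1,\dots,h_n\}$ for $H$ and show that the finite set $\{(h_1,1),\dots,(h_n,1)\}$ generates $\rho_H$ as a right congruence. The containment of this finite set in $\rho_H$ is immediate; for the reverse containment one checks that any pair $(a,b)$ with $ab^{-1}\in H$ is a consequence of these generators, again by expressing $ab^{-1}$ as a word in the $h_i$ and translating that word into an $X$-sequence connecting $a$ and $b$.

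The main obstacle I anticipate is carefully managing the bookkeeping in the $X$-sequence arguments, particularly the interplay between left-multiplication (inherent in the subgroup structure, since $ab^{-1}\in H$ involves acting on the left) and the right-multiplication by elements of $S^1$ that appears in the definition of an $X$-sequence. Making sure the translation between group words and $X$-sequences respects the one-sided nature of the congruence — and handling the $\overline{X}$ symmetrisation so that inverses $h_i^{-1}$ are correctly accounted for — is where the care is needed; the algebraic content is otherwise routine once the subgroup/right-congruence dictionary is fixed.
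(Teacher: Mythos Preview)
Your proposal is correct and follows essentially the same route as the paper: set up the dictionary $H\leftrightarrow\rho_H$ via $a\,\rho_H\,b\iff ab^{-1}\in H$, then in each direction translate a finite generating set through this correspondence and verify the translation works by unwinding an $X$-sequence. Your choice of generators $\{(h_i,1)\}$ in the converse direction is in fact slightly cleaner than the paper's, which writes the generating set as $U=\{(x,y):xy^{-1}\in X\}$ (literally an infinite set, though only finitely many pairs are actually used in the displayed sequence); otherwise the arguments are the same, and your anticipated bookkeeping concern about left-versus-right is exactly what the paper handles by writing $a=x_1(y_1^{-1}x_2\cdots y_k^{-1}b)$, etc.
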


\begin{proof}
($\Rightarrow$) Let $H$ be a subgroup of $G$.
We have a right congruence $\rho$ on $G$ given by
$$a\,\rho\,b\iff ab^{-1}\in H.$$
Since $G$ is right noetherian, we have that $\rho$ is generated by some finite subset $U$.
We shall show that $H$ is generated by the finite set 
$$X=\{xy^{-1} : (x, y)\in U\}.$$
Let $h\in H$.  We have that $h\,\rho\,1$, so there exists an $X$-sequence
$$h=x_1g_1, y_1g_1=x_2g_2, \dots, y_kg_k=1.$$
Hence, we have
$$h=(x_1y_1^{-1})x_2g_2=\cdots=(x_1y_1^{-1})\cdots(x_{k-1}y_{k-1}^{-1})x_kg_k=(x_1y_1^{-1})\dots(x_ky_k^{-1})\in\langle X\rangle.$$
($\Leftarrow$) Let $\rho$ be a right congruence on $G$.
The set $$H=\{xy^{-1} : (x, y)\in\rho\}$$ is a subgroup of $G$, so it is generated by some finite set $X$.
We claim that $\rho$ is generated by the set 
$$U=\{(x, y) : xy^{-1}\in X\}.$$
Indeed, let $a\,\rho\,b$ with $a\neq b$.
We have that $ab^{-1}\in H$, and hence
$$ab^{-1}=(x_1y_1^{-1})\dots(x_ky_k^{-1})$$ 
for some $x_iy_i^{-1}\in X$.  Therefore, we have a $U$-sequence
$$a=x_1(y_1^{-1}x_2\dots y_k^{-1}b), y_1(y_1^{-1}x_2\dots y_k^{-1}b)=x_2(y_2^{-1}x_3\dots y_k^{-1}b), \dots, y_k(y_k^{-1}b)=b,$$
as required.
\end{proof}

\begin{corollary}
\label{groupcorollary}
Let $G$ be a group and let $H$ be a subgroup of $G$.  If $G$ is right noetherian, then $H$ is right noetherian.
\end{corollary}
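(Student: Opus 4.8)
The plan is to reduce everything to the characterisation of right noetherian groups furnished by Proposition \ref{group}, which says that a group is right noetherian precisely when all of its subgroups are finitely generated. This turns both the hypothesis and the desired conclusion into statements purely about finite generation of subgroups, at which point the result becomes a formal consequence of the transitivity of the subgroup relation.

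Concretely, I would first apply the forward direction of Proposition \ref{group} to the hypothesis: since $G$ is right noetherian, every subgroup of $G$ is finitely generated. Next, I would aim to verify the criterion of Proposition \ref{group} for $H$, namely that every subgroup of $H$ is finitely generated. So let $K$ be an arbitrary subgroup of $H$. The key observation is that $K$ is then also a subgroup of $G$, because $K\leq H$ and $H\leq G$ together give $K\leq G$. Consequently $K$ is finitely generated by the conclusion of the previous step. As $K$ was arbitrary, every subgroup of $H$ is finitely generated, and the converse direction of Proposition \ref{group} (applied to $H$ in place of $G$) then yields that $H$ is right noetherian.

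I do not anticipate any genuine obstacle here: once Proposition \ref{group} is in hand, the only content is the transitivity of ``is a subgroup of'', and the proof is essentially immediate. The one point worth stating explicitly, so that the argument reads cleanly, is that Proposition \ref{group} is being used twice and in opposite directions --- once to unpack the hypothesis on $G$ and once to repackage the finite-generation condition as right noetherianity of $H$.
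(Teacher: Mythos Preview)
Your proof is correct and matches the paper's intended argument: the corollary is stated without proof immediately after Proposition~\ref{group}, since it follows at once by applying that proposition in both directions together with transitivity of the subgroup relation, exactly as you describe.
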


\begin{remark}
Given Proposition \ref{group}, from now on we will refer to right noetherian groups as {\em noetherian} groups.
It is known that polycyclic groups (and hence supersolvable and finitely generated nilpotent groups) are noetherian \cite{Hirsch}.
Also, there exist noetherian groups that are not finitely presented;
for example, Tarski groups 
(infinite groups $G$ all of whose non-trivial proper subgroups are cyclic of fixed prime order $p$),
which Olshanskii showed exist in \cite[Chapter 9]{Olshanskii}. 
\end{remark}

The following result shows that the property of being right noetherian is closed under homomorphic images (or equivalently quotients).

\begin{lemma}
\label{quotient}
Let $S$ be a semigroup and let $T$ be a homomorphic image of $S.$ 
If $S$ is right noetherian, then $T$ is right noetherian.
\end{lemma}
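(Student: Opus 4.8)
The plan is to exploit the standard correspondence between right congruences on the quotient $T$ and right congruences on $S$ that contain the kernel of the defining surjection. Fix a surjective homomorphism $\phi\colon S\to T$, which we extend to a monoid homomorphism $\phi\colon S^1\to T^1$ by setting $\phi(1)=1$. Let $\sigma$ be an arbitrary right congruence on $T$; the goal is to show that $\sigma$ is finitely generated.

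First I would pull $\sigma$ back along $\phi$, defining
$$\rho=\{(a,b)\in S\times S : (\phi(a),\phi(b))\in\sigma\}.$$
A routine check shows that $\rho$ is a right congruence on $S$: it is an equivalence relation as the preimage of one, and right compatibility follows from that of $\sigma$ together with $\phi(ac)=\phi(a)\phi(c)$. Since $S$ is right noetherian, $\rho$ is finitely generated; let $X$ be a finite generating set for $\rho$.

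The key step is to verify that $\sigma$ is generated by the finite image set
$$Y=\{(\phi(x),\phi(y)) : (x,y)\in X\}.$$
One inclusion is immediate: each $(x,y)\in X$ lies in $\rho$, so $(\phi(x),\phi(y))\in\sigma$, whence $Y\subseteq\sigma$ and the right congruence generated by $Y$ is contained in $\sigma$. For the reverse inclusion I would take an arbitrary pair $(s,t)\in\sigma$ with $s\neq t$ and use surjectivity of $\phi$ to write $s=\phi(a)$, $t=\phi(b)$. Then $(a,b)\in\rho$, so there is an $X$-sequence
$$a=x_1s_1,\; y_1s_1=x_2s_2,\;\dots,\; y_ks_k=b$$
connecting $a$ and $b$ in $S$. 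Applying $\phi$ and using that it is a homomorphism on $S^1$ turns this termwise into a $Y$-sequence
$$s=\phi(x_1)\phi(s_1),\; \phi(y_1)\phi(s_1)=\phi(x_2)\phi(s_2),\;\dots,\; \phi(y_k)\phi(s_k)=t$$
connecting $s$ and $t$ in $T$, with each $(\phi(x_i),\phi(y_i))\in\overline{Y}$. Hence $(s,t)$ is a consequence of $Y$, so it lies in the right congruence generated by $Y$. Combined with the first inclusion, this shows that $\sigma$ equals the right congruence generated by the finite set $Y$, as required.

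I expect the only real subtlety to be the bookkeeping around $S^1$ and $T^1$: one must extend $\phi$ so that adjoined identities are handled correctly, and then verify both inclusions so that the image of $X$ genuinely reconstructs every pair of $\sigma$ rather than merely a subcongruence. The heart of the argument — that a homomorphism carries $X$-sequences to $Y$-sequences — is essentially formal, since $\phi$ respects both the products $x_is_i$ and the symmetrisation passing from $\overline{X}$ to $\overline{Y}$.
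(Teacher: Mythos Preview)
Your proof is correct and follows essentially the same approach as the paper: pull back the right congruence along the surjection, use right noetherianity of $S$ to obtain a finite generating set, push it forward, and apply $\phi$ termwise to $X$-sequences to obtain $Y$-sequences. Your version is slightly more explicit about extending $\phi$ to $S^1\to T^1$ and about checking both inclusions, but the argument is the same.
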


\begin{proof}
Let $\rho$ be a right congruence on $T$.
Let $\theta : S\to T$ be a surjective homomorphism, and define a right congruence $\rho^{\prime}$ on $S$ by 
$$s\,\rho^{\prime}\,t\iff s\theta\,\rho\,t\theta.$$
Since $S$ is right noetherian, $\rho^{\prime}$ is generated by a finite set $X.$
We claim that $\rho$ is generated by the finite set 
$$Y=\{(x\theta, y\theta) : (x, y)\in X\}.$$
Indeed, let $a\,\rho\,b$ with $a\neq b.$
There exist $s, t\in S$ such that $a=s\theta$ and $b=t\theta.$
Since $s\,\rho^{\prime}\,t$ and $s\neq t,$ there exists an $X$-sequence connecting $s$ and $t.$
Applying $\theta$ to every term of this sequence yields a $Y$-sequence connecting $a$ and $b.$
\end{proof}

It is often the case in semigroup theory that for a semigroup $S$ and an ideal $I$ of $S,$
if both $I$ and the Rees quotient $S/I$ have a property $\mathcal{P},$ then $S$ also has property $\mathcal{P}.$
In particular, this holds for the properties of being finitely generated and finitely presented \cite[Theorem 2.1]{Campbell}.
The next result reveals that this is also the case for the property of being right noetherian.

\begin{prop}
\label{idealextension}
Let $S$ be a semigroup and let $I$ be an ideal of $S$.
If both $I$ and $S/I$ are right noetherian, then $S$ is right noetherian.
\end{prop}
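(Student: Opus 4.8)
The plan is to prove that $S$ is right noetherian by taking an arbitrary right congruence $\rho$ on $S$ and producing a finite generating set for it. The natural strategy is to decompose the information carried by $\rho$ into a part that lives inside the ideal $I$ and a part that is visible in the Rees quotient $S/I$, use the right noetherian hypothesis on each piece to get finite generating sets there, and then combine these into a finite generating set for $\rho$ on $S$.

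\par First I would form the restriction of $\rho$ to $I$. Since $I$ is an ideal (hence closed under right multiplication by elements of $S^1$), the relation $\rho_I=\rho\cap(I\times I)$ is a right congruence on $I$, so by the hypothesis on $I$ it is generated by a finite set $X\subseteq I\times I$. In parallel, I would push $\rho$ down to $S/I$. Writing elements of $S/I$ as those of $S\setminus I$ together with a zero $0$ (the class of $I$), the relation $\rho$ induces a right congruence $\bar\rho$ on $S/I$ via the quotient map: declare two images related when their preimages are $\rho$-related (with the convention that all of $I$ collapses to $0$). By the hypothesis on $S/I$, this $\bar\rho$ is generated by a finite set $\bar Y$; I would lift each generating pair to a pair $(u,v)$ of representatives in $S$, obtaining a finite set $Y\subseteq S\times S$. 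The claim is then that $\rho$ is generated by the finite set $X\cup Y$, possibly augmented by finitely many further pairs to repair the mismatch between $S/I$ and $S$ (see below).

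\par The verification that $X\cup Y$ (suitably augmented) generates $\rho$ is where the work lies, and I would do it by analysing an arbitrary related pair $a\,\rho\,b$ with $a\neq b$ according to whether $a,b$ lie in $I$ or in $S\setminus I$. If both lie in $I$, the pair is in $\rho_I$ and is already a consequence of $X$. If neither lies in $I$, then the images $\bar a,\bar b$ are $\bar\rho$-related in $S/I$, so there is a $\bar Y$-sequence connecting them; lifting this sequence term by term to $S$ via the chosen representatives produces an $(X\cup Y)$-sequence whose endpoints match $a$ and $b$ modulo $I$, and the residual discrepancy is a pair inside $I\times I$, which is then absorbed by the $X$-part. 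The genuinely delicate case is a mixed pair with $a\in S\setminus I$ and $b\in I$: here the image pair in $S/I$ is $(\bar a,0)$, and a $\bar Y$-sequence realising this will, when lifted, land $a$ somewhere in $I$ but not necessarily at $b$.

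\par I expect the main obstacle to be precisely this control of the lifted sequences near the ideal: an $X$-sequence in $S/I$ involving the zero class gives, upon lifting, only that $a$ reaches \emph{some} element of $I$, and one must then use $\rho_I$ to travel the remaining distance inside $I$. The key point to nail down is that whenever a lifted multiplication $us$ with $u\in S\setminus I$ lands in $I$, the product $us$ is a genuine element of $I$ on which $\rho_I$ may act, so the finite set $X$ suffices to finish. Making this rigorous may require adding to the generating set finitely many pairs that record, for each lifting generator $(u,v)\in Y$ and each relevant product, how the endpoint sits in $I$; because there are only finitely many generators in $Y$, only finitely many such correction pairs arise, keeping the total generating set finite. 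Once these bookkeeping pairs are identified, assembling the three cases into a single $(X\cup Y\cup \text{corrections})$-sequence connecting $a$ and $b$ completes the argument.
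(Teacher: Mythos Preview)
Your overall strategy matches the paper's, but there is a genuine gap in how you set up the $S/I$ part. The relation $\bar\rho$ you define---$\pi(a)\,\bar\rho\,\pi(b)$ whenever some preimages satisfy $a\,\rho\,b$---is not in general transitive, hence not a right congruence. For a concrete failure, take the five-element null semigroup $S=\{a,b,c,d,0\}$, the ideal $I=\{c,d,0\}$, and let $\rho$ have classes $\{a,c\},\{b,d\},\{0\}$: then $\pi(a)\,\bar\rho\,0$ via $a\,\rho\,c$ and $0\,\bar\rho\,\pi(b)$ via $d\,\rho\,b$, yet $a$ and $b$ are not $\rho$-related, so $\pi(a)\not\mathrel{\bar\rho}\pi(b)$.

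The paper avoids this by working instead with the right congruence $\rho'$ on $S/I$ \emph{generated} by the explicit set $Y=\bigl(\rho\cap(U\times U)\bigr)\cup\{(s,0):s\in U,\ s\,\rho\,i\text{ for some }i\in I\}$, where $U=S\setminus I$. Writing down $Y$ explicitly matters: once right-noetherianity of $S/I$ makes $\rho'$ finitely generated, Lemma~\ref{finite subset} lets one replace an arbitrary finite generating set by a finite subset $Y'\subseteq Y$, and every pair of $Y'$ then lifts canonically to a pair in $\rho$ (for $(s,0)\in Y'$ one lifts $0$ to a chosen $\alpha(s)\in I$ with $s\,\rho\,\alpha(s)$). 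Your proposal glosses over exactly this when you write ``lift each generating pair to a pair of representatives''---without arranging the generators to lie inside $Y$, there is no guarantee the lifts land in $\rho$. Once these two repairs are made, your case analysis (both in $I$; both in $U$; mixed) goes through and coincides with the paper's argument.
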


\begin{proof}
Let $\rho$ be a right congruence on $S.$
We have that $\rho_I$, the restriction of $\rho$ to $I,$ is generated by a finite set $X,$ since $I$ is right noetherian.
Let $U=S\!\setminus\!I$.  For each $s\in U$ such that $s\,\rho\,i$ for some $i\in I$, choose $\alpha(s)\in I$ such that $s\,\rho\,\alpha(s).$
Let $\rho^{\prime}$ be the right congruence on $S/I$ generated by the set 
$$Y=\bigl(\rho\cap(U\times U)\bigr)\cup\{(s, 0) : s\in U, s\,\rho\,i\text{ for some }i\in I\}.$$
Since $S/I$ is right noetherian, we have that $\rho^{\prime}$ is generated by a finite set $Y^{\prime}\subseteq Y$ by Lemma \ref{finite subset}.
Let $Y_1=Y^{\prime}\cap(U\times U)$ and let 
$$Y_2=\{(x, \alpha(x)) : (x, 0)\in Y^{\prime}\}.$$
We now make the following claim.
\begin{claim*}
Let $s\in S$ such that $s\,\rho\,i$ for some $i\in I$.
Then there exists $s^{\prime}\in I$ such that $(s, s^{\prime})$ is a consequence of $Y_1\cup Y_2.$
\end{claim*}
\begin{proof}
If $s\in I$, simply let $s^{\prime}=s$.  Suppose now that $s\in U$.
We have that $s\,\rho^{\prime}\,0$, so there exists a $Y^{\prime}$-sequence
$$s=x_1s_1, y_1s_1=x_2s_2, \dots, y_ks_k=0,$$
where $x_is_i\in U$ for $1\leq i\leq k.$
If $y_k\in U$, then we have a $Y_1$-sequence connecting $s$ and $y_ks_k\in I.$
If $y_k=0$, then $(s, x_ks_k)$ is a consequence of $Y_1,$ and we obtain $\alpha(x_k)s_k\in I$ from $x_ks_k$ using $Y_2.$
\end{proof}
Returning to the proof of Proposition \ref{idealextension}, we claim that $\rho$ is generated by the finite set $X\cup Y_1\cup Y_2.$\par
Indeed, let $s\,\rho\,t$ with $s\neq t.$
If there do not exist any $i\in I$ such that $(s, i)\in\rho$, then $s, t\in U$ and $(s, t)$ is a consequence of $Y_1$.
Otherwise, by the above claim, there exist $s^{\prime}, t^{\prime}\in I$ such that $(s, s^{\prime})$ and $(t, t^{\prime})$ are consequences of $Y_1\cup Y_2.$
Since $s^{\prime}\,\rho_I\,t^{\prime},$ we have that $(s^{\prime}, t^{\prime})$ is a consequence of $X.$
Hence, $(s, t)$ is a consequence of $X\cup Y_1\cup Y_2.$
\end{proof}

It is well known that for modules over Noetherian rings (rings that satisfy the ascending chain condition on right ideals), 
finite generation and finite presentability are equivalent properties.
Monoid acts play the analogous role in the theory of monoids as modules do in the theory of rings.
In the following, we provide some basic definitions about monoid acts; see \cite{Kilp} for more information.\par
Let $S$ be a semigroup.  A {\em (right) $S$-act} is a non-empty set $A$ together with a map 
$$A\times S\to A, (a, s) \mapsto as$$
such that $a(st)=(as)t$ for all $a\in A$ and $s, t\in S.$\par 
If $S$ is a monoid and $a1=a$ for all $a\in A,$ then $A$ is a {\em (monoid) $S$-act}.
For instance, $S$ itself is an $S$-act via right multiplication.\par 
An equivalence relation $\rho$ on an $S$-act $A$ is an {\em ($S$-act) congruence} on $A$ if $(a, b)\in\rho$ implies $(as, bs)\in\rho$ for all $a, b\in A$ and $s\in S.$
Note that the congruences on the $S$-act $S$ are precisely the right congruences on $S.$\par
An $S$-act $A$ is {\em finitely generated} if there exists a finite subset $X\subseteq A$ such that $A=XS^1,$
and $A$ is {\em finitely presented} if it is isomorphic to a quotient of a finitely generated free $S$-act by a finitely generated congruence.
One may consult \cite[Section 1.5]{Kilp} for more details.\par
Finite presentability of monoid acts has been systematically studied in \cite{Miller1}, \cite{Miller2}.
The following result provides a characterisation of right noetherian monoids in terms of finite presentability of their acts.

\begin{prop}
\label{acts}
The following are equivalent for a monoid $S$:
\begin{enumerate}
 \item $S$ is right noetherian;
 \item every cyclic $S$-act is finitely presented;
 \item every finitely generated $S$-act is finitely presented.
\end{enumerate}
\end{prop}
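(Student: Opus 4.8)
The plan is to establish the cycle $(3)\Rightarrow(2)\Rightarrow(1)\Rightarrow(3)$, with $(1)\Rightarrow(3)$ as the main implication. Throughout I would use, without further comment, the evident act-theoretic analogues of the notions of $X$-sequence and consequence, together with the standard facts that a finitely generated $S$-act on $n$ generators is precisely a quotient $F_n/\sigma$ of the free $S$-act $F_n$ on $n$ generators, and that such a quotient is finitely presented exactly when $\sigma$ is finitely generated (the ``only if'' part here, like the argument for $(2)\Rightarrow(1)$ below, rests on the fact that finite presentability does not depend on the chosen finite generating set).

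The implication $(3)\Rightarrow(2)$ is immediate, since cyclic acts are finitely generated. For $(2)\Rightarrow(1)$ I would take an arbitrary right congruence $\rho$ on $S$ and form the cyclic act $S/\rho$, which is the quotient of the free cyclic act $F_1=S$ by $\rho$. By $(2)$ it is finitely presented; since finite presentability of an act does not depend on the chosen finite generating set (the act-theoretic counterpart of Lemma~\ref{finite subset}; see \cite[Theorem~2.5.5]{Burris} and \cite[Section~1.5]{Kilp}), applying this with the one-element generating set consisting of the image of $1$ shows that the kernel $\rho$ of the canonical map $F_1\to S/\rho$ is finitely generated. As $\rho$ was arbitrary, $S$ is right noetherian.

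For $(1)\Rightarrow(3)$ it suffices, by the remarks above, to prove that every congruence on $F_n$ is finitely generated, and I would do this by induction on $n$. The base case $n=1$ is exactly hypothesis $(1)$, since congruences on $F_1=S$ are the right congruences on $S$. For the inductive step write $F_n=F_{n-1}\sqcup x_nS$, where $F_{n-1}$ is a subact isomorphic to the free act on $n-1$ generators, and let $\sigma$ be a congruence on $F_n$. I would restrict $\sigma$ in two ways: to $\tau=\sigma\cap(F_{n-1}\times F_{n-1})$, a congruence on $F_{n-1}$ that is finitely generated (by a finite set $P$) by the inductive hypothesis; and to $\rho=\{(s,t):(x_ns,x_nt)\in\sigma\}$, a right congruence on $S$ that is finitely generated (by a finite set $Q$) by $(1)$. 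It then remains to control the ``mixed'' pairs of $\sigma$ with one coordinate in $x_nS$ and the other in $F_{n-1}$.

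This last point is where I expect the main obstacle to lie, and it is resolved using weak right noetherianity. The set $J=\{s\in S:(x_ns,f)\in\sigma\text{ for some }f\in F_{n-1}\}$ is a right ideal of $S$, since $\sigma$ is a congruence and $F_{n-1}$ is a subact. As $S$ is right noetherian it is weakly right noetherian by Lemma~\ref{weakly}, so $J=\{u_1,\dots,u_m\}S^1$ for some finite set; choosing $f_k\in F_{n-1}$ with $(x_nu_k,f_k)\in\sigma$ yields a finite set $R=\{(x_nu_k,f_k):1\le k\le m\}$. I would then verify that $\sigma$ is generated by $P\cup Q'\cup R$, where $Q'$ is the copy of $Q$ inside $x_nS$, by a case analysis on the location of the coordinates of a pair $(\alpha,\beta)\in\sigma$: pairs within $F_{n-1}$ are consequences of $P$, pairs within $x_nS$ are consequences of $Q'$, and for a mixed pair with $\alpha=x_ns$ one writes $s=u_kv$ (possible since $s\in J$) and uses $R$ to reduce $x_ns$ to $f_kv\in F_{n-1}$, after which $P$ finishes the argument. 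Assembling these finite sets gives a finite generating set for $\sigma$, completing the induction and hence the proof.
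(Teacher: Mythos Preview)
Your proof is correct but follows a different cycle from the paper: you prove $(3)\Rightarrow(2)\Rightarrow(1)\Rightarrow(3)$, whereas the paper establishes $(1)\Rightarrow(2)\Rightarrow(3)\Rightarrow(1)$. The substantive difference lies in the ``hard'' implication. The paper proves $(2)\Rightarrow(3)$ by induction on the number of generators of an act $A$: it splits off a cyclic subact $B=\langle x\rangle$ and then invokes external results from \cite{Miller2} on finite presentability of disjoint unions and of extensions by Rees quotients to conclude that $A$ is finitely presented. Your $(1)\Rightarrow(3)$ instead works directly with congruences on the free act $F_n$, using the inductive hypothesis on $F_{n-1}$, hypothesis $(1)$ on the copy of $S$ carried by $x_n$, and Lemma~\ref{weakly} (weak right noetherianity) to finitely generate the right ideal $J$ controlling the mixed pairs. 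Your route is more self-contained---it stays entirely within the paper and avoids the appeals to \cite{Miller2}---while the paper's route is shorter on the page precisely because it outsources the structural work on acts to that reference. Both the $(2)\Rightarrow(1)$ step here and the paper's $(3)\Rightarrow(1)$ rest on the same underlying fact (finite presentability of $S/\rho$ forces $\rho$ to be finitely generated), which the paper again cites from \cite{Miller2}.
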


\begin{proof}
$(1)\Rightarrow(2)$.  Let $A$ be a cyclic $S$-act.
We have that $A\cong S/\rho$ where $\rho$ is a right congruence on $S$ \cite[Proposition 1.5.17]{Kilp}.  
Since $S$ is right noetherian, the right congruence $\rho$ is finitely generated, so $A$ is finitely presented.\par
$(2)\Rightarrow(3)$.  We prove by induction on the number of generators.  
Let $A=\langle X\rangle$ where $|X|=n.$  If $n=1$, then $A$ is cyclic and hence finitely presented, so assume that $n>1.$  
Assume that every finitely generated $S$-act with a generating set of size less than $n$ is finitely presented.
Choose $x\in X$ and let $B=\langle x\rangle$.  Since $B$ is cyclic, it is finitely presented.
If $A\!\setminus\!B$ is a subact, then it is generated by the set $X\!\setminus\!\{x\}$ and is hence finitely presented by assumption.
By \cite[Corollary 5.9]{Miller2} we have that $A,$ being the disjoint of two finitely presented $S$-acts, is also finitely presented.
If $A\!\setminus\!B$ is not a subact, we consider the Rees quotient $A/B=(A\!\setminus\!B)\cup\{0\},$ which is formed by defining $0\cdot s=0$ and
$$a\cdot s=
  \begin{cases} 
   as & \text{if }as\in A\\
   0 & \text{if }as\in B
  \end{cases}$$
for all $a\in A\!\setminus\!B$ and $s\in S.$ 
Now $A/B$ is generated by the set $X\!\setminus\!\{x\},$ so it is finitely presented by assumption.
It now follows from \cite[Corollary 4.4]{Miller2} that $A$ is finitely presented.\par
$(3)\Rightarrow(1)$.  For any right congruence $\rho$ on $S,$ we have that the cyclic $S$-act $A=S/\rho$ is finitely presented, 
and hence $\rho$ is finitely generated by \cite[Proposition 3.9]{Miller2}.
\end{proof}

\section{\large{Subsemigroups}\nopunct}

In this section we study the relationship between a semigroup and its substructures with regard to being right noetherian.
The following example demonstrates that subsemigroups (indeed, ideals) of right noetherian semigroups are not in general right noetherian.

\begin{ex}
\label{freecommex}
Let $S$ be the free commutative semigroup on $\{a, b\},$ which is right noetherian by Theorem \ref{commutative}, and let $I$ be the ideal of $S$ generated by $\{a\}$.
The set $\{ab^i : i\geq 0\}$ is a minimal (semigroup) generating set for $I,$ so $I$ is not right noetherian by Theorem \ref{commutative}.
\end{ex}

In the following we explore various situations in which the property of being right noetherian \textit{is} inherited by subsemigroups.\par

The first situation we consider is where a subsemigroup has finite complement.
It has been shown that various finiteness properties, including finite generation, finite presentability and periodicity, 
are inherited when passing to subsemigroups of finite complement and extensions; 
see \cite{Ruskuc1} for more details.
The following result shows that this also holds for the property of being right noetherian.

\begin{thm}
\label{large}
Let $S$ be a semigroup with a subsemigroup $T$ such that $S\!\setminus\!T$ is finite.
Then $S$ is right noetherian if and only if $T$ is right noetherian.
\end{thm}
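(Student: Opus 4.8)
The plan is to prove the two implications separately, in both cases working at the level of individual right congruences and the $X$-sequence/consequence calculus introduced above, together with Lemma~\ref{finite subset}. Throughout write $F=S\setminus T$, a finite set, and keep in view the two basic operations relating the semigroups: \emph{restriction} $\rho\mapsto\rho\cap(T\times T)$, which sends a right congruence on $S$ to one on $T$ (since $T$ is a subsemigroup, $T$-multiples of a pair in $T\times T$ stay in $T\times T$), and \emph{$S$-generation} $\sigma\mapsto\sigma^{S}$, the right congruence on $S$ generated by $\sigma\subseteq T\times T$. A $T$-sequence is automatically an $S$-sequence, since multipliers in $T^{1}$ lie in $S^{1}$; so consequences over $T$ are consequences over $S$, and the whole difficulty lies in the reverse passage.

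For the direction $(\Leftarrow)$, suppose $T$ is right noetherian and let $\rho$ be a right congruence on $S$. Its restriction $\rho_{T}=\rho\cap(T\times T)$ is a right congruence on $T$, hence generated over $T$ by a finite set $X$. I would then adjoin the finite set $Z$ consisting of all pairs of $\rho$ lying in $F\times F$, together with one pair $(f,\tau(f))$ for each $f\in F$ whose $\rho$-class meets $T$, where $\tau(f)\in T$ is a chosen element with $f\mathrel{\rho}\tau(f)$. Both $X$ and $Z$ are finite subsets of $\rho$, and a short case analysis shows $X\cup Z$ generates $\rho$: given $(a,b)\in\rho$ with $a\neq b$, if the common $\rho$-class lies inside $F$ then $(a,b)\in Z$; otherwise one first moves each of $a,b$ that lies in $F$ to its representative $\tau(\cdot)\in T$ using $Z$, and then connects the resulting pair of $T$-elements using $X$, their pair lying in $\rho_{T}$ by transitivity. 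Hence $S$ is right noetherian.

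For the direction $(\Rightarrow)$, suppose $S$ is right noetherian and let $\sigma$ be a right congruence on $T$. Form $\rho=\sigma^{S}$; since $S$ is right noetherian, Lemma~\ref{finite subset} gives a finite $X\subseteq\sigma$ with $\rho=X^{S}$. The idea is to rewrite a witnessing $S$-sequence between $T$-elements into a $T$-sequence at the cost of finitely many new generators. In any $X$-sequence $a=x_{1}s_{1},\,y_{1}s_{1}=x_{2}s_{2},\dots,y_{k}s_{k}=b$ the relation-pairs satisfy $x_{i},y_{i}\in T$, and every step whose multiplier $s_{i}$ lies in $T^{1}$ stays inside $T$ and is already a legitimate $T$-step. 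The only obstructions are steps with $s_{i}\in F$ and the resulting excursions of the sequence into $F$. Here finiteness of $F$ is decisive: an excursion can only be entered from an element of the finite set $E=\{\,xf:(x,y)\in\overline{X}\text{ for some }y,\ f\in F\,\}\cap T$, and it runs through the finite set $F$, so all such excursions are captured by a finite set $W\subseteq E\times E$ of pairs, each of which lies in $\rho\cap(T\times T)$. Replacing excursions by single applications of pairs from $W$ converts any $S$-sequence between $T$-elements into a $T$-sequence over $X\cup W$, which shows that the restriction $\rho\cap(T\times T)$ is finitely generated over $T$.

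The main obstacle is that this restriction may be strictly larger than $\sigma$: the congruence $\sigma$ need not be closed under right multiplication by the elements of $F$, so there may be $(x,y)\in\sigma$ and $f\in F$ with $xf,yf\in T$ but $(xf,yf)\notin\sigma$, and such a pair lies in $\sigma^{S}\cap(T\times T)\setminus\sigma$. Equivalently, the restriction map from right congruences on $S$ to right congruences on $T$ is not surjective—its image is precisely the $F$-closed right congruences—so one cannot recover an arbitrary $\sigma$ merely by lifting and restricting. I expect this to be the crux, and I would resolve it by transferring the ascending chain condition faithfully rather than by a naive lift: using a Reidemeister--Schreier-type rewriting that records the action of the finite complement $F$, in the spirit of the finite-complement arguments for finite generation and presentability in \cite{Ruskuc1} and combined with the excursion analysis above, so that any strictly ascending chain of right congruences on $T$ yields a strictly ascending chain on $S$, contradicting the right noetherianity of $S$.
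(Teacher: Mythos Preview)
Your $(\Leftarrow)$ direction matches the paper's. For $(\Rightarrow)$ you correctly observe that $\sigma^{S}\cap(T\times T)$ can strictly contain $\sigma$, but the obstacle you then propose to circumvent with Reidemeister--Schreier machinery is illusory: the fix is simply to take $W=\sigma\cap(E\times E)$ rather than $W\subseteq\sigma^{S}\cap(E\times E)$. This is still finite (since $E$ is), it is contained in $\sigma$, and the bypass pairs you actually need already lie in it.

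Concretely, given $(a,b)\in\sigma$ with $a\neq b$, take an $X$-sequence in $S$ and let $i$ be the \emph{first} and $j$ the \emph{last} index with $s_{i},s_{j}\in F$. The initial segment (indices $<i$) has all multipliers in $T^{1}$, so it is a $T$-sequence over $X$; hence $(a,x_{i}s_{i})\in\sigma$. Likewise $(y_{j}s_{j},b)\in\sigma$ from the final segment. Since $(a,b)\in\sigma$ by hypothesis, transitivity \emph{inside} $\sigma$ gives $(x_{i}s_{i},y_{j}s_{j})\in\sigma\cap(E\times E)=W$. Replacing the entire middle block by this single $W$-step produces a $T$-sequence over $X\cup W$ from $a$ to $b$, so $X\cup W$ generates $\sigma$ over $T$. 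The crucial point you missed is to bypass the whole stretch from the first to the last $F$-multiplier at once, rather than individual steps or excursions: each individual replacement pair $(x_{l}s_{l},y_{l}s_{l})$ with $s_{l}\in F$ need not lie in $\sigma$, but the single global pair does, precisely because you already know $(a,b)\in\sigma$. This is the paper's argument, and no chain-condition transfer is required.
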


\begin{proof}
($\Rightarrow$) Let $\rho$ be a right congruence on $T$.
Denote by $\overline{\rho}$ the right congruence on $S$ generated by $\rho$.
Since $S$ is right noetherian, $\overline{\rho}$ is generated by some finite subset $X\subseteq\rho.$
Define a set $$U=\{xs : (x, y)\in\overline{X}\text{ for some }y\in T, s\in S\!\setminus\!T\}.$$
Note that $U$ is finite, since $X$ and $S\!\setminus\!T$ are finite.
We claim that $\rho$ is generated by the finite set 
$$Y=X\cup\bigl(\rho\cap(U\times U)\bigr).$$
Indeed, let $a\,\rho\,b$ with $a\neq b.$
Since $a\,\overline{\rho}\,b$, there exists an $X$-sequence
\begin{equation}
a=x_1s_1, y_1s_1=x_2s_2, \dots, y_ks_k=b,
\end{equation}
where $(x_i, y_i)\in\overline{X}$ and $s_i\in S^1$ for $1\leq i\leq k.$
If every $s_i\in T^1,$ then $(a, b)$ is a consequence of $X.$
Otherwise, let $i$ be minimal such that $s_i\in S\!\setminus\!T,$ and let $j$ be maximal such that $s_j\in S\!\setminus\!T.$
We then have that $s_l\in T^1$ for every $l\in\{1, \dots, i-1, j+1, \dots, k\},$ and $(x_is_i, y_js_j)\in\rho\cap(U\times U).$ 
Therefore, by omitting the subsequence 
$$y_is_i=x_{i+1}s_{i+1}, \dots, y_{j-1}s_{j-1}=x_js_j$$
from (1), we obtain a $Y$-sequence connecting $a$ and $b,$ as required.\par
($\Leftarrow$) Let $\rho$ be a right congruence on $S.$
Let $\rho_T$ be the restriction of $\rho$ to $T.$
Since $T$ is right noetherian, we have that $\rho_T$ is generated by a finite set $X.$\par
Let $V=S\!\setminus\!T.$
For each $s\in V$ such that $s\,\rho\,t$ for some $t\in T$, choose $\alpha(s)\in T$ such that $s\,\rho\,\alpha(s)$.
We claim that $\rho$ is generated by the finite set 
$$Y=X\cup\{(s, \alpha(s)) : s\in V, s\,\rho\,t\text{ for some }t\in T\}\cup\bigl(\rho\cap(V\times V)\bigr).$$
Indeed, let $s\,\rho\,t$ with $s\neq t$.
If $s, t\in V$, then $(s, t)\in\rho\cap(V\times V)$.
If $s, t\in T$, then $s\,\rho_T\,t$, so $(s, t)$ is a consequence of $X$.
Finally, suppose that $s\in V$ and $t\in T$.
We have that $\alpha(s)\,\rho\,s\,\rho~t$, so $\alpha(s)\,\rho_T\,t$, and hence $(\alpha(s), t)$ is a consequence of $X$.
Therefore, we have that $(s, t)$ is a consequence of $Y$.
\end{proof}

\begin{corollary}
\label{adjoinidentity}
A semigroup $S$ is right noetherian if and only if $S^1$ is right noetherian.
\end{corollary}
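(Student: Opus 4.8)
The plan is to obtain this statement as an immediate consequence of Theorem \ref{large}, with essentially no new work. The key observation is that $S$ is always a subsemigroup of $S^1$ (the multiplication of $S^1$ restricts to that of $S$), and that the complement $S^1\setminus S$ is finite. Indeed, $S^1\setminus S$ is either empty, in the case that $S$ already possesses an identity element, or the singleton $\{1\}$ consisting of the adjoined identity otherwise; in both cases it has at most one element.

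Having recorded this, I would simply apply Theorem \ref{large} with the ambient semigroup taken to be $S^1$ and the subsemigroup of finite complement taken to be $S$. That theorem then yields directly that $S^1$ is right noetherian if and only if $S$ is right noetherian, which is exactly the desired equivalence.

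I do not expect any genuine obstacle, since the entire content of the statement is already captured by Theorem \ref{large}; the only thing to verify is the (immediate) finiteness of $S^1\setminus S$. The one point worth flagging is the degenerate case in which $S$ is already a monoid, where $S^1=S$ and the assertion is a tautology; one should keep the convention $S^1=S$ in force there so that the statement reads correctly, and otherwise the argument is a one-line deduction.
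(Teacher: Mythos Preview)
Your proposal is correct and matches the paper's approach exactly: the paper states the corollary immediately after Theorem~\ref{large} with no separate proof, so the intended argument is precisely the one-line application you describe, using that $S^1\setminus S$ has at most one element.
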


\begin{corollary}
\label{adjoinzero}
A semigroup $S$ is right noetherian if and only if $S^0$ is right noetherian.
\end{corollary}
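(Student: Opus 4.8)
The plan is to recognise this corollary as an immediate special case of Theorem \ref{large}, in exactly the same way that Corollary \ref{adjoinidentity} is. Recall that $S^0$ is obtained from $S$ by adjoining a single fresh zero element $0$, so that $S^0 = S \cup \{0\}$ with $0s = s0 = 0$ for all $s \in S^0$.

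First I would verify the hypotheses of Theorem \ref{large}, taking the ambient semigroup to be $S^0$ and the distinguished subsemigroup to be $S$ itself. The set $S$ is nonempty and closed under the multiplication of $S^0$: for $a, b \in S$ the product $ab$ computed in $S^0$ coincides with the product in $S$ and hence lies in $S$, so the adjoined zero never intrudes. Thus $S$ is genuinely a subsemigroup of $S^0$. Moreover $S^0 \setminus S = \{0\}$, which is finite, being a singleton.

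With the hypotheses in place, Theorem \ref{large} applied to the pair consisting of $S^0$ and its subsemigroup $S$ states precisely that $S^0$ is right noetherian if and only if $S$ is right noetherian, which is the desired equivalence.

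There is essentially no obstacle here, since the only thing requiring verification is that $S$ is a subsemigroup of $S^0$ of finite complement, and this is immediate from the definition of $S^0$. The one borderline case worth a brief remark is when $S$ already possesses a zero element: under the convention that $S^0$ always adjoins a fresh zero, the complement remains the singleton $\{0\}$, and in any event stays finite, so the argument is unaffected.
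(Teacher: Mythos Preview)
Your proof is correct and follows exactly the paper's approach: the corollary is stated immediately after Theorem \ref{large} with no separate proof, being an immediate application of that theorem to the pair $S \subseteq S^0$ with finite complement $\{0\}$. Your remark about the convention for $S^0$ is a nice touch, though under either convention (adjoining a fresh zero always, or only when $S$ lacks one) the complement is finite and the argument goes through.
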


The next situation we consider is where the complement of a subsemigroup is a left ideal.

\begin{prop}
\label{leftideal}
Let $S$ be a semigroup with a subsemigroup $T$ such that $S\!\setminus\!T$ is a left ideal of $S.$
If $S$ is right noetherian, then $T$ is right noetherian.
\end{prop}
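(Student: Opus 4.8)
The plan is to use the extend-and-restrict strategy employed in Theorem \ref{large} and Proposition \ref{idealextension}: given a right congruence $\rho$ on $T$, extend it to a right congruence $\overline{\rho}$ on $S$, extract a finite generating set using right noetherianity of $S$, and pull it back to $T$. The entire argument rests on a single structural observation coming from the hypothesis that $L=S\setminus T$ is a left ideal.

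First I would record the key fact. Since $SL\subseteq L$, for any $x\in T$ and $s\in S$ we have $xs\in T$ if and only if $s\in T$: indeed, if $s\in L$ then $xs\in SL\subseteq L$, so $xs\notin T$. Allowing $s=1$, this says that whenever $x\in T$ and $xs\in T$ with $s\in S^1$, necessarily $s\in T^1$.

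Next, let $\overline{\rho}$ be the right congruence on $S$ generated by $\rho$ (viewing $\rho\subseteq T\times T\subseteq S\times S$). Since $S$ is right noetherian, $\overline{\rho}$ is finitely generated, and because $\rho$ generates it, Lemma \ref{finite subset} yields a finite subset $X\subseteq\rho$ generating $\overline{\rho}$. I claim that $X$ already generates $\rho$ as a right congruence on $T$. As $X\subseteq\rho$, it suffices to show that every pair of $\rho$ is a consequence of $X$ inside $T$. Take $a\,\rho\,b$ with $a\neq b$; then $a\,\overline{\rho}\,b$, so there is an $X$-sequence
$$a=x_1s_1, \; y_1s_1=x_2s_2, \; \dots, \; y_ks_k=b$$
in $S$, with $(x_i,y_i)\in\overline{X}$ and $s_i\in S^1$. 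Because $X\subseteq T\times T$, every $x_i,y_i$ lies in $T$. Applying the structural fact inductively—$a=x_1s_1\in T$ with $x_1\in T$ forces $s_1\in T^1$, whence $y_1s_1\in T$, which forces $s_2\in T^1$, and so on—shows that every $s_i\in T^1$ and that every term of the sequence lies in $T$. Thus the sequence is an $X$-sequence entirely within $T$, so $(a,b)$ is a consequence of $X$ over $T$. Hence $\rho$ is generated by the finite set $X$, and $T$ is right noetherian.

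I expect the only genuine obstacle to be spotting that the left-ideal hypothesis forces all multipliers $s_i$ to remain in $T^1$; this is precisely what prevents the extension from $T$ to $S$ from creating connections that wander outside $T$. Once this is isolated, the remainder is the routine extend/restrict bookkeeping already seen earlier in the paper, and the boundary case $s_i=1$ is handled automatically since $1\in T^1$ and $x\cdot 1=x\in T$.
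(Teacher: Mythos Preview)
Your proof is correct and follows essentially the same approach as the paper's: extend $\rho$ to $\overline{\rho}$ on $S$, extract a finite generating set $X\subseteq\rho$ via Lemma~\ref{finite subset}, and use the left-ideal hypothesis to force every multiplier $s_i$ in an $X$-sequence to lie in $T^1$. The only cosmetic difference is that you isolate the implication ``$x\in T,\ xs\in T\Rightarrow s\in T^1$'' as a separate observation up front, whereas the paper invokes it inline during the induction.
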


\begin{proof}
Let $\rho$ be a right congruence on $T.$
Let $\overline{\rho}$ denote the right congruence on $S$ generated by $\rho.$
Since $S$ is right noetherian, $\overline{\rho}$ is finitely generated, so it is generated by some finite subset $X\subseteq\rho.$
We claim that $\rho$ is generated by $X.$\par
Indeed, let $a\,\rho\,b$ with $a\neq b.$
Since $a\,\overline{\rho}\,b,$ there exists an $X$-sequence
$$a=x_1s_1, y_1s_1=x_2s_2, \dots, y_ks_k=b.$$
Since $a\in T$ and $S\!\setminus\!T$ is a left ideal of $S$, we must have that $s_1\in T^1.$
Therefore, we have that $y_1s_1\in T$, which in turn implies that $s_2\in T^1.$
Continuing in this way, we have that $s_i\in T^1$ for $1\leq i\leq k,$
so $(a, b)$ is a consequence of $X.$
\end{proof}

\begin{corollary}
\label{rnideal}
Let $S$ be a semigroup with a subsemigroup $T$ such that $S\!\setminus\!T$ is a right noetherian ideal of $S.$
Then $S$ is right noetherian if and only if $T$ is right noetherian.
\end{corollary}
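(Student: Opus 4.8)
The plan is to treat the two implications separately, reducing each to results already established in the excerpt. Throughout, I would write $I=S\!\setminus\!T$, so that by hypothesis $I$ is a right noetherian ideal of $S$ and $T$ is a subsemigroup with complement $I$.

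For the forward direction, I would simply observe that since $I$ is a (two-sided) ideal, it is in particular a left ideal of $S$. Thus $T$ is a subsemigroup whose complement $S\!\setminus\!T=I$ is a left ideal, and Proposition \ref{leftideal} immediately yields that $T$ is right noetherian whenever $S$ is. No further work is needed here.

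For the converse, suppose $T$ is right noetherian; the aim is to apply the ideal-extension result, Proposition \ref{idealextension}, to the ideal $I$. Since $I$ is right noetherian by hypothesis, it suffices to show that the Rees quotient $S/I$ is right noetherian. The key observation is that $S/I\cong T^0$. Indeed, as a set $S/I=(S\!\setminus\!I)\cup\{0\}=T\cup\{0\}$, and for $a, b\in T$ the product $ab$ lies in $T$ (as $T$ is a subsemigroup), so it never falls into $I$; hence the Rees multiplication restricted to $T$ coincides with the multiplication in $T$, while $0$ acts as an adjoined zero. This exhibits $S/I$ as $T$ with a zero adjoined. By Corollary \ref{adjoinzero}, $T^0$ is right noetherian since $T$ is, and therefore so is $S/I$.

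With both $I$ and $S/I$ right noetherian, Proposition \ref{idealextension} gives that $S$ is right noetherian, completing the converse. I do not anticipate a genuine obstacle here: the only point requiring any care is the identification $S/I\cong T^0$, which rests entirely on the fact that $T$ is closed under multiplication, so that no product of elements of $T$ is collapsed to $0$ in the Rees quotient. Everything else is a direct appeal to the preceding results.
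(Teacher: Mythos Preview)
Your proof is correct and follows essentially the same route as the paper: the forward direction via Proposition~\ref{leftideal}, and the converse via the identification $S/I\cong T^0$, Corollary~\ref{adjoinzero}, and Proposition~\ref{idealextension}. The only difference is that you spell out the isomorphism $S/I\cong T^0$ in slightly more detail than the paper does.
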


\begin{proof}
The direct implication follows from Proposition \ref{leftideal}.
For the converse, let $I=S\!\setminus\!T$.  
Now $S/I\cong T\cup\{0\}$ is right noetherian by Corollary \ref{adjoinzero}, since $T$ is right noetherian.
It now follows from Proposition \ref{idealextension} that $S$ is right noetherian.
\end{proof}

\begin{remark}
If $S$ is a right noetherian semigroup with a right noetherian subsemigroup $T$ such that $I=S\!\setminus\!T$ is an ideal of $S,$
it is not necessary that $I$ be right noetherian.
Indeed, in Example \ref{freecommex} we have that $S$ and $T=S\!\setminus\!I\cong\mathbb{N}$ are both right noetherian, but $I$ is not right noetherian.
\end{remark}

Let $S$ be a semigroup and let $X$ be a subset of $S.$
We define the {\em right stabiliser} of $X$ in $S$ to be the set
$$\text{Stab}_R(X)=\{s\in S^1 : Xs=X\}.$$ 
It is clear that $\text{Stab}_R(X)$ is a submonoid of $S^1.$\par
Our next result states that the right stabiliser of any subset of a right noetherian semigroup is right noetherian.

\begin{prop}
\label{stabiliser}
Let $S$ be a semigroup and let $X$ be a subset of $S.$
If $S$ is right noetherian, then the right stabiliser $\emph{Stab}_R(X)$ of $X$ is also right noetherian.
\end{prop}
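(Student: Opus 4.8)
The plan is to follow the lift-and-restrict strategy already used for Theorem \ref{large} and Proposition \ref{leftideal}, with the stabiliser condition $Xs=X$ supplying the extra leverage. By Corollary \ref{adjoinidentity} I may assume that $S^1$ is right noetherian, and I write $T=\text{Stab}_R(X)$, which is a submonoid of $S^1$ with $T^1=T$. Given a right congruence $\rho$ on $T$, I regard it as a subset of $S^1\times S^1$ and let $\overline{\rho}$ be the right congruence on $S^1$ that it generates. Since $S^1$ is right noetherian and $\rho$ generates $\overline{\rho}$, Lemma \ref{finite subset} yields a finite subset $G\subseteq\rho$ that also generates $\overline{\rho}$. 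I would then claim that $\rho$ is generated, as a right congruence on $T$, by this finite set $G$.

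Because $G\subseteq\rho$, one inclusion is immediate. For the reverse, take $a\,\rho\,b$ with $a\neq b$; as $\rho\subseteq\overline{\rho}$, there is a $G$-sequence
$$a=x_1s_1,\; y_1s_1=x_2s_2,\; \dots,\; y_ks_k=b$$
in $S^1$, with $(x_i,y_i)\in\overline{G}$ and $s_i\in S^1$. The key point is that this $S^1$-sequence already lives entirely in $T$. Indeed, since $G\subseteq\rho\subseteq T\times T$, every $x_i$ and $y_i$ stabilises $X$. Setting $c_0=a$ and $c_i=y_is_i$ (so that $c_i=x_{i+1}s_{i+1}$ by the gluing equations and $c_k=b$), I would show by induction that each $c_i$ and each $s_i$ lies in $T$: assuming $c_{i-1}\in T$, the relation $c_{i-1}=x_is_i$ together with $Xx_i=X$ gives $Xs_i=(Xx_i)s_i=X(x_is_i)=Xc_{i-1}=X$, whence $s_i\in T$, and then $c_i=y_is_i\in T$ as a product of elements of the submonoid $T$. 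As $c_0=a\in T$, the induction runs; all multipliers $s_i$ then lie in $T=T^1$, so the displayed sequence is a $T$-sequence connecting $a$ and $b$. Hence $(a,b)$ is a consequence of $G$ over $T$, which proves that $(a,b)$ lies in the right congruence on $T$ generated by $G$ and completes the claim.

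The only genuine obstacle is verifying that the multipliers $s_i$ cannot leave $T$, and this is exactly where the defining property of the stabiliser is used: the one-line identity $Xs_i=(Xx_i)s_i=X(x_is_i)$ is the crux of the whole argument, propagating membership in $T$ along the sequence. Everything else — the reduction to $S^1$, the extraction of a finite $G$, and the sequence bookkeeping — is routine. The degenerate case $X=\emptyset$ requires no separate treatment, since then $T=S^1$ is right noetherian outright, and in any case the induction above applies verbatim.
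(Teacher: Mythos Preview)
Your proof is correct and follows essentially the same approach as the paper: lift $\rho$ to a right congruence $\overline{\rho}$ on $S^1$, extract a finite generating set $G\subseteq\rho$ for $\overline{\rho}$, and then propagate membership in $T=\text{Stab}_R(X)$ along any $G$-sequence via the identity $Xs_i=(Xx_i)s_i=X(x_is_i)=Xc_{i-1}=X$. The paper's argument is phrased slightly more tersely (without the auxiliary $c_i$ notation), but the content is identical.
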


\begin{proof}
Let $\rho$ be a right congruence on $\text{Stab}_R(X)$.
Let $\overline{\rho}$ denote the right congruence on $S^1$ generated by $\rho$.
We have that $S^1$ is right noetherian by Corollary \ref{adjoinidentity}, so $\overline{\rho}$ is generated by some finite subset $Y\subseteq\rho.$
We claim that $\rho$ is generated by $Y.$\par
Indeed, let $a\,\rho\,b$ with $a\neq b$.  Since $a\,\overline{\rho}\,b$, there exists a $Y$-sequence
$$a=x_1s_1, y_1s_1=x_2s_2, \dots, y_ks_k=b,$$
where $(x_i, y_i)\in\overline{Y}$ and $s_i\in S^1$ for $1\leq i\leq k$.
Since $a, x_1\in\text{Stab}_R(X)$, we have $$Xs_1=(Xx_1)s_1=Xa=X,$$ so $s_1\in\text{Stab}_R(X)$.
Therefore, we have that $y_1s_1\in\text{Stab}_R(X)$, which in turn implies that $s_2\in\text{Stab}_R(X)$.
Continuing in this way, we have that $s_i\in\text{Stab}_R(X)$ for all $i\in\{1, \dots, k\},$ so $(a, b)$ is a consequence of $Y,$ as required.
\end{proof}

Of particular interest in semigroup theory are maximal subgroups, which coincide with the group $\mathcal{H}$-classes. 
Sch{\"u}tzenberger showed in \cite{Schutzenberger} how one can assign a group to an arbitrary $\mathcal{H}$-class,
so as to reflect the group-like properties of that class.
Here we give his construction; one may consult \cite{Lallement} for more details and basic properties of Sch{\"u}tzenberger groups.\par
Let $S$ be a semigroup and let $H$ be an $\mathcal{H}$-class of $S.$
Define a relation $\sigma_R(H)$ on the right stabiliser $\text{Stab}_R(H)$ by
$$(s, t)\in\sigma_R(H)\iff hs=ht\text{ for all }h\in H.$$
It is easy to see that $\sigma_R(H)$ is a congruence on $\text{Stab}_R(H),$ 
and it turns out that the quotient $\Gamma(H)=\text{Stab}_R(H)/\sigma_R(H)$ is a group.
Of course, by left-right duality, one may also define a group $\Gamma^{\prime}(H)$ by considering the left stabiliser of $H.$
It turns out, however, that $\Gamma(H)\cong\Gamma^{\prime}(H).$
We will therefore refer to $\Gamma(H)$ as the {\em Sch{\"u}tzenberger group} of $H$ and remember that is has two natural actions on $H,$ one on the left and one on the right.\par
Note that if $H$ is a group $\mathcal{H}$-class (that is, a maximal subgroup), then it is isomorphic to $\Gamma(H)$ \cite[Theorem 3.3]{Lallement}.
By \cite[Lemma 1.4]{Kozhukhov1}, maximal subgroups of right noetherian semigroups are noetherian.
Since $\Gamma(H)$ is a quotient of $\text{Stab}_R(H),$ 
Proposition \ref{stabiliser} and Lemma \ref{quotient} together yield the following generalisation of this result.

\begin{corollary}
\label{Schutzenberger}
Let $S$ be a semigroup and let $H$ be an $\mathcal{H}$-class of $S.$
If $S$ is right noetherian, then the Sch{\"u}tzenberger group $\Gamma(H)$ is noetherian.
\end{corollary}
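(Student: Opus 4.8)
The proof is a direct composition of the two results flagged in the preceding paragraph, so the plan is brief. First I would invoke Proposition \ref{stabiliser} with the subset taken to be $H$ itself: since $S$ is right noetherian, the right stabiliser $\text{Stab}_R(H)$ is right noetherian.

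Next I would note that, by its very construction, $\Gamma(H)=\text{Stab}_R(H)/\sigma_R(H)$ is a homomorphic image of $\text{Stab}_R(H)$, the relevant surjective homomorphism being the natural quotient map $s\mapsto s\,\sigma_R(H)$. This map is a genuine semigroup homomorphism precisely because $\sigma_R(H)$ is a congruence on $\text{Stab}_R(H)$, a fact already recorded in the text. Applying Lemma \ref{quotient} then yields that $\Gamma(H)$ is right noetherian.

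Finally, since $\Gamma(H)$ is a group, being right noetherian is the same as being noetherian, in accordance with the convention adopted after Proposition \ref{group}. Hence $\Gamma(H)$ is noetherian, as required. There is no real obstacle here: the only step deserving even a moment's attention is the verification that the quotient map is a homomorphism so that Lemma \ref{quotient} applies, and this is immediate from $\sigma_R(H)$ being a congruence. The substance of the corollary lies entirely in Proposition \ref{stabiliser} and the quotient lemma, both already established.
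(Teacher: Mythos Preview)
Your proposal is correct and matches the paper's approach exactly: the paper simply remarks that since $\Gamma(H)$ is a quotient of $\text{Stab}_R(H)$, the corollary follows immediately from Proposition~\ref{stabiliser} and Lemma~\ref{quotient}.
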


We may now easily deduce that subgroups of right noetherian semigroups are noetherian.

\begin{corollary}\cite[Lemma 1.4]{Kozhukhov1}
\label{subgroup}
Let $S$ be a semigroup and let $G$ be a subgroup of $S$.  If $S$ is right noetherian, then $G$ is noetherian.
\end{corollary}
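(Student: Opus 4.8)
The plan is to realise $G$ as a subgroup of a maximal subgroup of $S$, so that the two immediately preceding corollaries can be chained together. The Schützenberger group machinery only gives information about $\mathcal{H}$-classes, so the first task is to locate the $\mathcal{H}$-class into which $G$ fits.

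First I would let $e$ be the identity element of $G$; since $e^2=e$, it is an idempotent of $S$. The key step is to observe that $G$ is contained in the $\mathcal{H}$-class $H_e$ of $e$. Indeed, for any $g\in G$ with inverse $g^{-1}\in G$, the group axioms give $g=ge=eg$ and $gg^{-1}=g^{-1}g=e$, from which one reads off $gS^1=eS^1$ and $S^1g=S^1e$, so that $g\,\mathcal{H}\,e$ and hence $g\in H_e$. Since $H_e$ contains the idempotent $e$, it is a group $\mathcal{H}$-class, that is, a maximal subgroup of $S$.

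With this containment in place, the remainder is a direct appeal to earlier results. By Corollary \ref{Schutzenberger}, the right noetherianity of $S$ forces the Schützenberger group $\Gamma(H_e)$ to be noetherian; and since $H_e$ is a group $\mathcal{H}$-class it is isomorphic to $\Gamma(H_e)$, so $H_e$ is itself a noetherian group. As $G$ is a subgroup of $H_e$, Corollary \ref{groupcorollary} then yields that $G$ is noetherian, as required.

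The only point requiring genuine verification is the containment $G\subseteq H_e$; everything after that is a straightforward chaining of Corollaries \ref{Schutzenberger} and \ref{groupcorollary}. I do not expect any real obstacle here, since the computation of the $\mathcal{H}$-relation uses nothing beyond the group structure of $G$ and the definitions of Green's relations $\mathcal{R}$, $\mathcal{L}$ and $\mathcal{H}$.
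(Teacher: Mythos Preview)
Your proposal is correct and follows essentially the same approach as the paper: identify $G$ as a subgroup of the maximal subgroup $H_e\cong\Gamma(H_e)$, apply Corollary~\ref{Schutzenberger} to conclude $H_e$ is noetherian, and then invoke Corollary~\ref{groupcorollary}. The paper's version is terser, simply asserting $G\subseteq H_e$ without spelling out the Green's relations computation that you include.
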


\begin{proof}
Let $e$ be the identity of $G$.  Now $G$ is a subgroup of the maximal subgroup $H_e\cong\Gamma(H_e)$.
We have that $H_e$ is noetherian by Corollary \ref{Schutzenberger}, and hence $G$ is noetherian by Corollary \ref{groupcorollary}.
\end{proof}

Kozhukhov proved in \cite{Kozhukhov1} that, given a semigroup $S$ and a subgroup $G$ of $S,$ the lattice of subgroups of $G$ can be embedded in the lattice of right congruences on $S.$
Our next result shows that Sch{\"u}tzenberger groups exhibit the same behaviour.

\begin{prop}
\label{prop:Schutz}
Let $S$ be a semigroup and let $H$ be an $\mathcal{H}$-class of $S.$ 
Then the lattice $L\bigl(\Gamma(H)\bigr)$ of subgroups of $\Gamma(H)$ can be embedded in the lattice $L(S)$ of right congruences on $S.$
\end{prop}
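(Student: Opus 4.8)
The plan is to fix an element $h_0\in H$ and exploit the fact that $\Gamma(H)$ acts regularly (freely and transitively) on $H$ on the right, via $h\cdot[s]=hs$ for $s\in\text{Stab}_R(H)$; thus $\gamma\mapsto h_0\gamma$ is a bijection $\Gamma(H)\to H$. For a subgroup $\Sigma\le\Gamma(H)$ I would set $R_\Sigma=\{(h_0\gamma_1,h_0\gamma_2):\gamma_1\gamma_2^{-1}\in\Sigma\}$, the equivalence on $H$ whose classes are the images of the right cosets $\Sigma\gamma$; by construction $R_\Sigma$ is invariant under the right action of $\Gamma(H)$. Let $\tau_\Sigma$ be the right congruence on $S$ generated by $R_\Sigma$, and take the candidate embedding to be $\Sigma\mapsto\tau_\Sigma$. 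Recovering $\Sigma$ as $\{\gamma:(h_0,h_0\gamma)\in\tau_\Sigma\}$ will give injectivity, provided the next step holds.

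The key step, which I expect to carry most of the weight, is the identity $\tau_\Sigma\cap(H\times H)=R_\Sigma$: generating a right congruence on all of $S$ identifies no further pairs inside $H$. The inclusion $\supseteq$ is clear. For $\subseteq$ I would use the $X$-sequence description of consequences: if $p,q\in H$ and $(p,q)$ is a consequence of $R_\Sigma$ through a sequence $p=x_1s_1,\ y_1s_1=x_2s_2,\ \dots,\ y_ks_k=q$ with $(x_i,y_i)\in R_\Sigma\subseteq H\times H$ and $s_i\in S^1$, then the whole sequence stays inside $H$. This follows from the standard fact that, for $h\in H$ and $s\in S^1$, $hs\in H$ forces $Hs=H$, i.e.\ $s\in\text{Stab}_R(H)$: since $x_1\in H$ and $x_1s_1=p\in H$ we get $s_1\in\text{Stab}_R(H)$, hence $y_1s_1=x_2s_2\in H$, and inductively every $s_i\in\text{Stab}_R(H)$ with every term in $H$. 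Then right-invariance of $R_\Sigma$ gives $x_is_i\,R_\Sigma\,y_is_i$ at each step, and transitivity yields $p\,R_\Sigma\,q$. Granting this, order-preservation is immediate from $\Sigma_1\subseteq\Sigma_2\Rightarrow R_{\Sigma_1}\subseteq R_{\Sigma_2}$, and order-reflection follows by intersecting with $H\times H$, so $\Sigma\mapsto\tau_\Sigma$ is at least an order-embedding.

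To upgrade this to a lattice embedding I would verify joins and meets separately. Joins are comparatively easy: $\tau_{\Sigma_1}\vee\tau_{\Sigma_2}$ is the right congruence generated by $R_{\Sigma_1}\cup R_{\Sigma_2}$, and rerunning the ``stays inside $H$'' argument for this generating set shows that its restriction to $H$ is the $\Gamma(H)$-invariant equivalence generated by $R_{\Sigma_1}\cup R_{\Sigma_2}$, which corresponds to the subgroup $\langle\Sigma_1\cup\Sigma_2\rangle=\Sigma_1\vee\Sigma_2$; comparing generating sets on both sides then gives $\tau_{\Sigma_1}\vee\tau_{\Sigma_2}=\tau_{\Sigma_1\vee\Sigma_2}$.

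The main obstacle is meet-preservation, $\tau_{\Sigma_1}\cap\tau_{\Sigma_2}=\tau_{\Sigma_1\cap\Sigma_2}$, where $\supseteq$ is free but $\subseteq$ is delicate precisely because $\tau_\Sigma$ is \emph{not} concentrated on $H$: generating the congruence propagates pairs out of $H$ along right multiplication into $HS^1$, and a pair $(p,q)$ with $p,q\in HS^1\setminus H$ may lie in both $\tau_{\Sigma_1}$ and $\tau_{\Sigma_2}$ for different reasons. To control this I would seek a complete description of $\tau_\Sigma$ on $HS^1$ using the \emph{left} Sch\"utzenberger action, which commutes with right multiplication by $S$: writing $\Lambda_\Sigma$ for the subgroup corresponding to $\Sigma$ under $\Gamma(H)\cong\Gamma'(H)$, one expects $\tau_\Sigma=\{(p,p):p\in S\}\cup\{(p,up):p\in HS^1,\ u\in\Lambda_\Sigma\}$, which reduces the meet identity to a question about intersecting cosets of point-stabilisers with $\Lambda_1$ and $\Lambda_2$. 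On $H$ itself the left action is regular, so stabilisers are trivial and the meet behaves correctly; the crux is handling the nontrivial stabilisers arising at points of $HS^1\setminus H$, and this is where the real work of the proof lies.
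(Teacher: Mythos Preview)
Your order-embedding argument and the treatment of joins are correct: the $X$-sequence reasoning showing that generation from $R_\Sigma$ cannot escape and return to $H$ without staying inside $\text{Stab}_R(H)$ is exactly the right mechanism, and it also handles joins. But the map $\Sigma\mapsto\tau_\Sigma$ you propose does \emph{not} preserve meets, and this is not merely unfinished work --- it fails. Take $\Gamma(H)\cong(\mathbb{Z}/2)^2=\{1,a,b,c\}$ with $c=ab$, realised as the group of units $H=G$ inside $S=G\cup(G/\langle c\rangle)\cup\{0\}$ (with $G$ acting on $G/\langle c\rangle$ on both sides by multiplication, and all products within $G/\langle c\rangle$ equal to $0$). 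The coset $p=\langle c\rangle\in HS^1$ has left stabiliser $\langle c\rangle$, so $a\cdot p=b\cdot p\neq p$. With $\Sigma_1=\langle a\rangle$ and $\Sigma_2=\langle b\rangle$ your own explicit description of $\tau_\Sigma$ gives $(p,a\cdot p)\in\tau_{\Sigma_1}\cap\tau_{\Sigma_2}$, whereas $\Sigma_1\cap\Sigma_2=\{1\}$ forces $\tau_{\Sigma_1\cap\Sigma_2}=\Delta_S$, which misses this pair. So the nontrivial stabilisers you flagged are not a technicality to be worked through --- they break this particular construction.

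The paper's proof avoids this by \emph{not} using the generated congruence. It defines $\rho_G$ explicitly on $HS^1$ with a crucial extra clause: at any $x=hs$ on which the left $\Gamma(H)$-action is \emph{not} faithful, $\rho_G$ identifies \emph{all} pairs $(hs,h's)$ with $h,h'\in H$, independently of $G$; where the action is faithful, $\rho_G$ agrees with your $\tau_G$. Because the non-faithful clause does not depend on $G$, it contributes the same relation to every $\rho_G$, so on that locus the meet identity $\rho_{G_1}\cap\rho_{G_2}=\rho_{G_1\cap G_2}$ holds trivially, while on the faithful locus your stabiliser-free argument goes through. The missing idea, then, is to \emph{inflate} the congruence at degenerate points so that it becomes constant in $G$ there, rather than to attempt to control the coset combinatorics of the point-stabilisers.
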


\begin{proof}
We consider the left action of $\Gamma(H)$ on $H,$ and denote $\sigma_L(H)$ by $\sigma.$
We define an action of $\Gamma(H)$ on $HS^1$ as follows: for $s/\sigma\in\Gamma(H)$ and $x\in HS^1,$ let $s/\sigma\!\cdot\!x=sx.$  We claim that this action is well-defined.
Indeed, if $(s, t)\in\sigma$ and $x\in HS^1,$ then $x=hu$ for some $h\in H$ and $u\in S^1,$ and hence $$sx=(sh)u=(th)u=tx.$$
We say that $\Gamma(H)$ acts {\em faithfully} on $x\in HS^1$ if $g_1\!\cdot\!x=g_2\!\cdot\!x$ implies that $g_1=g_2$ for all $g_1, g_2\in\Gamma(H).$
Note that for any $h, h^{\prime}\in H$ and $s\in S^1,$ we have that $\Gamma(H)$ acts faithfully on $hs$ if and only if it acts faithfuly on $h^{\prime}s.$\par
Now let $G$ be a subgroup of $\Gamma(H).$ 
We define a relation $\rho_G$ on $S$ as follows: $(x, y)\in\rho_G$ if and only if one of the following holds:
\begin{enumerate}
 \item $x=y$;
 \item $x=hs, y=h^{\prime}s$ for some $h, h^{\prime}\in H, s\in S^1,$ $h=g\!\cdot\!h^{\prime}$ for some $g\in G,$ and $\Gamma(H)$ acts faithfully on $x.$
 \item $x=hs, y=h^{\prime}s$ for some $h, h^{\prime}\in H, s\in S,$ and $\Gamma(H)$ does not act faithfully on $x.$
\end{enumerate}
It can be easily shown that $\rho_G$ is a right congruence.
We claim that the map $G\mapsto\rho_G$ is a lattice embedding of $L\bigl(\Gamma(H)\bigr)$ into $L(S).$
So, let $G_1$ and $G_2$ be subgroups of $\Gamma(H).$
We need to prove the following: (a) $\rho_{G_1\cap G_2}=\rho_{G_1}\cap\rho_{G_2}$; (b) $\rho_U=\langle\rho_{G_1}\cup\rho_{G_2}\rangle$, where $U$ is the subgroup of $G$ generated by $G_1\cup G_2$;
(c) $\rho_{G_1}=\rho_{G_2}$ implies that $G_1=G_2$.\par 
(a) It is clear that $\rho_{G_1\cap G_2}\subseteq\rho_{G_1}\cap\rho_{G_2},$ so we just need to prove the reverse containment.
Let $(x, y)\in\rho_{G_1}\cap\rho_{G_2}.$  
If (1) or (3) are satisfied, then $(x, y)\in\,\rho_{G_1\cap G_2}$ (since (1) and (3) do not refer to $G$), so suppose that (2) holds.
Therefore, there exist $h_1, h_1^{\prime}\in H,$ $s_1\in S^1$ and $g_1\in G_1$ such that $x=h_1s_1, y=h_1^{\prime}s_1, h_1=g_1\!\cdot\!h_1^{\prime},$
and there exist $h_2, h_2^{\prime}\in H,$ $s_2\in S^1$ and $g_2\in G_2$ such that $x=h_2s_2, y=h_2^{\prime}s_2, h_2=g_2\!\cdot\!h_2^{\prime}.$ 
It follows that $g_1\!\cdot\!y=x=g_2\!\cdot\!y,$ which implies that $g_1=g_2,$ and hence $(x, y)\in\rho_{G_1\cap G_2}.$\par 
(b) Again, it suffices to consider $(x, y)$ that satisfies (2).  Suppose first that $(x, y)\in\rho_U.$
Then $x=hs, y=h^{\prime}s$ for some $h, h^{\prime}\in H, s\in S^1,$ and $h=u\!\cdot\!h^{\prime}$ for some $u\in U.$
Now $u=g_1\dots g_n$ for some $g_i\in G_1\cup G_2.$  
Letting $h_i=(u_i\dots u_n)\!\cdot\!h^{\prime}, h_{n+1}=h^{\prime}$ and $x_i=h_is,$ 
we have that $h_i=u_i\!\cdot\!h_{i+1}$ and $(x_i, x_{i+1})\in\rho_{G_1}\cup\rho_{G_2}$ for each $i\in\{1, \dots, n\}.$
Hence, we have $(x, y)=(x_1, x_{n+1})\in\langle\rho_{G_1}\cup\rho_{G_2}\rangle.$\par 
Now suppose that $(x, y)\in\langle\rho_{G_1}\cup\rho_{G_2}\rangle.$  Then there exists a sequence $$x=x_1t_1, y_1t_1=x_2t_2, \dots, y_kt_k=y,$$
where, for each $i\in\{1, \dots, k\},$ there exist $h_i, h_i^{\prime}\in H,$ $s_i\in S^1$ and $g_i\in G_1\cup G_2$ such that $x_i=h_is_i, y_i=h_i^{\prime}s_i, h_i=g_i\!\cdot\!h_i^{\prime}.$
Letting $u=g_1\dots g_k\in U$ and $h=u\!\cdot\!h_k^{\prime}\in H,$ it follows from the above sequence that $x=u\!\cdot\!y=h(s_kt_k),$ and hence $(x, y)\in\rho_U.$\par 
(c) Suppose that $\rho_{G_1}=\rho_{G_2}.$  Consider $g_1\in G_1,$ let $x\in H$ be arbitrary, and let $y=g_1\!\cdot\!x\in H.$
We have that $(x, y)\in\rho_{G_1}=\rho_{G_2}$ via (2), so there exist $h, h^{\prime}\in H,$ $s\in S^1$ and $g_2\in G_2$ such that $x=hs, y=h^{\prime}s, h^{\prime}=g_2\!\cdot\!h.$
Hence, we have that $$g_1\!\cdot\!x=y=h^{\prime}s=g_2\!\cdot\!hs=g_2\!\cdot\!x,$$ which implies that $g_1=g_2\in G_2,$ so $G_1\subseteq G_2.$
A similar argument proves that $G_2\subseteq G_1,$ and hence $G_1=G_2,$ as required.
\end{proof}

A semigroup $S$ is said to be {\em regular} if for every $s\in S$ there exists $x\in S$ such that $s=sxs.$
If, additionally, the inverse of each element of $S$ is unique, then $S$ is an {\em inverse semigroup}.
It is well known that a semigroup is inverse if and only if it is regular and its idempotents commute \cite[Proposition 2.12]{Lallement}.\par
We have the following characterisation, due to Kozhukhov, of right noetherian inverse semigroups.

\begin{thm}\cite[Theorem 4.3]{Kozhukhov1}
\label{inverse}
Let $S$ be an inverse semigroup.  Then the following are equivalent:
\begin{enumerate}
 \item $S$ is right noetherian;
 \item $S$ has finitely many idempotents and all its maximal subgroups are noetherian.
\end{enumerate}
\end{thm}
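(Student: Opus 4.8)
The plan is to prove the two implications separately. For $(1)\Rightarrow(2)$, the assertion about maximal subgroups is immediate: a maximal subgroup is in particular a subgroup of $S$, so it is noetherian by Corollary \ref{subgroup}. All the work lies in showing that a right noetherian inverse semigroup has only finitely many idempotents. For $(2)\Rightarrow(1)$, I would reconstruct $S$ from its principal factors using Proposition \ref{idealextension}.

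For the finiteness of the idempotent set $E$, I would first record that the assignment $e\mapsto eS^1$ is an order-embedding of $(E,\le)$, where $\le$ is the natural partial order, into the poset of right ideals of $S$: for idempotents $e,f$ one has $eS^1\subseteq fS^1$ iff $e=fs$ for some $s$, which forces $fe=e$, i.e. $e\le f$. By Corollary \ref{weaklycorollary} there is no infinite antichain of right ideals, and by Lemma \ref{weakly} together with the ascending chain condition there is no infinite strictly ascending chain in $(E,\le)$. Hence, if $E$ were infinite, a Ramsey argument applied to an infinite sequence of distinct idempotents (colouring pairs as increasing, decreasing or incomparable) would yield an infinite chain, and the absence of infinite ascending chains would force it to be strictly descending, say $e_1>e_2>e_3>\cdots$. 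I would then show that the right congruence $\rho=\langle\{(e_i,e_{i+1}):i\ge1\}\rangle$ is not finitely generated, contradicting right noetherianity. By Lemma \ref{finite subset}, if $\rho$ were finitely generated it would be generated by finitely many of the pairs $(e_i,e_{i+1})$, hence contained in $\tau_N=\langle\{(e_i,e_{i+1}):i\le N\}\rangle$ for some $N$; but $(e_{N+1},e_{N+2})\in\rho$, so it suffices to prove $(e_{N+1},e_{N+2})\notin\tau_N$.

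This last separation is the crux of the argument. I would establish it by exhibiting an invariant of $\tau_N$-classes that distinguishes $e_{N+1}$ from $e_{N+2}$. Because the generating set $\{(e_i,e_{i+1}):i\le N\}$ is closed under right translation (using $e_{i+1}e_i=e_{i+1}$ in the chain, the right multiple $(e_it,e_{i+1}t)$ has the form $(w,e_{i+1}w)$ with $w=e_it$), the congruence $\tau_N$ is simply the equivalence relation generated by the moves $w\leftrightarrow e_{i+1}w$ for $i\le N$. I claim the map $\chi(x)=xx^{-1}e_{N+1}$ is constant along each such move: using $(e_{i+1}w)(e_{i+1}w)^{-1}=e_{i+1}(ww^{-1})e_{i+1}=ww^{-1}e_{i+1}$ and $e_{i+1}e_{N+1}=e_{N+1}$ (as $i+1\le N+1$ in the chain), one gets $\chi(e_{i+1}w)=ww^{-1}e_{N+1}=\chi(w)$. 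Since $\chi(e_{N+1})=e_{N+1}\ne e_{N+2}=\chi(e_{N+2})$, the two idempotents lie in different $\tau_N$-classes, as required.

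For $(2)\Rightarrow(1)$, finitely many idempotents gives finitely many $\mathcal{D}$-classes, and since $\mathcal{D}\subseteq\mathcal{J}$ also finitely many $\mathcal{J}$-classes; every $\mathcal{J}$-class of an inverse semigroup contains idempotents, so each principal factor is a completely $0$-simple inverse semigroup, that is, a Brandt semigroup $B(G,n)$ in which $n$ is finite (the number of idempotents in the class) and $G$ is a maximal subgroup, hence noetherian by hypothesis. Choosing an ideal chain $I_1\subset\cdots\subset I_m=S$ whose successive Rees factors are exactly these principal factors, I would then assemble $S$ by applying Proposition \ref{idealextension} repeatedly (invoking Corollary \ref{adjoinzero} to pass between an ideal quotient and its principal factor). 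The main obstacle in this direction is the base lemma that a Brandt semigroup $B(G,n)$ with $G$ noetherian and $n$ finite is right noetherian; I would prove this by describing the right congruences on $B(G,n)$ --- there are only finitely many combinatorial patterns on the $n$ indices, each refined by subgroup-and-coset data drawn from $G$ --- and deducing the ascending chain condition from the fact that $G$, being noetherian, satisfies the ascending chain condition on subgroups. Alternatively one may cite Kozhukhov's analysis of right noetherian $0$-simple semigroups \cite{Kozhukhov1}. I expect this Brandt-semigroup lemma, rather than the assembly itself, to be the most laborious part of the proof.
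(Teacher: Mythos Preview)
The paper does not supply its own proof of this theorem; it is quoted verbatim from Kozhukhov \cite[Theorem 4.3]{Kozhukhov1} and used as a black box. So there is nothing in the paper to compare your argument against directly.

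That said, your proposal is sound. For $(1)\Rightarrow(2)$, your invariant $\chi(x)=xx^{-1}e_{N+1}$ does exactly what is needed: for a generating move $e_is\leftrightarrow e_{i+1}s$ with $i\le N$ one has, using commutativity of idempotents and $e_je_{N+1}=e_{N+1}$ for $j\le N+1$, that $\chi(e_is)=ss^{-1}e_{N+1}=\chi(e_{i+1}s)$, while $\chi(e_{N+1})=e_{N+1}\neq e_{N+2}=\chi(e_{N+2})$. (Your phrasing ``closed under right translation'' is a little loose---the backward move $e_{i+1}s\to e_is$ is not literally of the form $w\mapsto e_jw$---but the invariant argument applies to the pair $(e_is,e_{i+1}s)$ directly, so this does not matter.) It is worth noting that the paper later gives an independent route to the finiteness of $E(S)$: the proof of Proposition~\ref{inversesubsemigroup} shows that the semilattice of idempotents of any inverse subsemigroup of a right noetherian semigroup is itself right noetherian, hence finite by Theorem~\ref{commutative}. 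That argument avoids Ramsey and the descending-chain invariant entirely.

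For $(2)\Rightarrow(1)$, your principal-series assembly via Proposition~\ref{idealextension} is precisely the strategy the paper employs in the proof of Theorem~\ref{dpinverse}, and the base case---that $B(G,I)$ with $G$ noetherian and $I$ finite is right noetherian---is contained in Lemma~\ref{dpBrandt} (take $M$ trivial), though that lemma is placed after Theorem~\ref{inverse} in the paper and so is not formally available at this point.
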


The above result leads to the following alternative characterisation of right noetherian inverse semigroups,
which is a generalisation of Proposition \ref{group}.

\begin{corollary}
\label{inversesubsemigroupsfg}
Let $S$ be an inverse semigroup.
Then $S$ is right noetherian if and only if every inverse subsemigroup of $S$ is finitely generated.
\end{corollary}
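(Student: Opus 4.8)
The plan is to route both implications through Kozhukhov's characterisation in Theorem~\ref{inverse}, thereby reducing ``right noetherian'' to the two conditions that $S$ has finitely many idempotents and that every maximal subgroup of $S$ is noetherian. The extra ingredient I will need is the structural fact that, for inverse semigroups, these two conditions together are equivalent to finite generation; this is what connects Theorem~\ref{inverse} to the statement about subsemigroups.

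For the ($\Leftarrow$) direction I would verify the two conditions of Theorem~\ref{inverse} assuming every inverse subsemigroup of $S$ is finitely generated. The idempotents $E(S)$ form a subsemilattice, which is itself an inverse subsemigroup and hence finitely generated by hypothesis; since a finitely generated semilattice is finite (a product of generators depends only on the set of generators occurring in it, so there are at most $2^n$ elements), $S$ has finitely many idempotents. Next, for a maximal subgroup $G=H_e$ and any subgroup $K\leq G$, the group $K$ is an inverse subsemigroup of $S$ and so is finitely generated as a semigroup; a group finitely generated as a semigroup is finitely generated as a group, so every subgroup of $G$ is finitely generated and $G$ is noetherian by Proposition~\ref{group}. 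Theorem~\ref{inverse} then yields that $S$ is right noetherian.

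For the ($\Rightarrow$) direction, suppose $S$ is right noetherian and let $T$ be an inverse subsemigroup. By Theorem~\ref{inverse}, $S$ has finitely many idempotents and noetherian maximal subgroups. Every idempotent of $T$ is an idempotent of $S$, so $T$ has finitely many idempotents; and each maximal subgroup $H_e^{T}=H_e\cap T$ of $T$ is a subgroup of the noetherian group $H_e$ and hence finitely generated by Proposition~\ref{group}. It therefore remains to establish the structural lemma: an inverse semigroup with finitely many idempotents and finitely generated maximal subgroups is finitely generated. This is the crux of the argument, and where I expect the main work to lie.

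To prove this lemma I would exploit that in an inverse semigroup each $\mathcal{L}$-class and each $\mathcal{R}$-class contains a unique idempotent, so finitely many idempotents forces finitely many $\mathcal{H}$-classes. For each nonempty $\mathcal{H}$-class $R_e\cap L_f$ I would fix a representative $s_{ef}$, and for each idempotent $e$ a finite generating set $A_e$ of the maximal subgroup $H_e$. The key computation is that any $s\in R_e\cap L_f$ satisfies $s=(s\,s_{ef}^{-1})\,s_{ef}$ with $s\,s_{ef}^{-1}\in H_e$ (one checks $sf=s$ and that $s\,s_{ef}^{-1}$ is fixed by the idempotent tests on both sides), so $s$ is a product of elements of $A_e$ followed by $s_{ef}$. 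Hence the finite set $\{s_{ef}\}\cup\bigcup_e A_e$ generates the whole semigroup. The main obstacle is carrying out this coordinatisation cleanly and confirming that a finitely generated group is finitely generated as a semigroup (its generating set already generates it as a group); once the lemma is in place, finite generation of $T$, and with it the corollary, follows immediately.
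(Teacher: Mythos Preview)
Your proposal is correct and follows essentially the same route as the paper: both directions are routed through Theorem~\ref{inverse}, reducing the problem to finiteness of $E(S)$ and noetherianity of the maximal subgroups, and the key bridge is the structural fact that an inverse semigroup with finitely many idempotents and finitely generated maximal subgroups is itself finitely generated. The only difference is that the paper quotes this structural lemma from \cite[Proposition~3.1]{Ruskuc2}, whereas you sketch a direct proof via $\mathcal{H}$-class representatives; your sketch is sound (one small clarification: take each $A_e$ to be a \emph{semigroup} generating set for $H_e$, i.e.\ group generators together with their inverses, so that every element of $H_e$---including $e$ itself---is an $A_e$-word).
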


\begin{proof}
($\Rightarrow$) Let $T$ be an inverse subsemigroup of $S$.
It follows from Theorem \ref{inverse} that $T$ has finitely many idempotents and all its maximal subgroups are finitely generated,
so $T$ is finitely generated by \cite[Proposition 3.1]{Ruskuc2}.\par
($\Leftarrow$) We have that the semilattice $Y$ of idempotents of $S$ is finite (since it is finitely generated).
Also, since all the subgroups of $S$ are finitely generated, we have that all the maximal subgroups of $S$ are noetherian.
Hence, by Theorem \ref{inverse}, we have that $S$ is right noetherian.
\end{proof}

The next result shows that the property of being right noetherian is inherited by inverse subsemigroups.

\begin{prop}
\label{inversesubsemigroup}
Let $S$ be a semigroup and let $T$ be an inverse subsemigroup of $S.$
If $S$ is right noetherian, then $T$ is right noetherian.
\end{prop}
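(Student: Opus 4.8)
The plan is to reduce the problem to Kozhukhov's characterisation of right noetherian inverse semigroups (Theorem \ref{inverse}). Since $T$ is itself inverse, it suffices to verify two things: that every maximal subgroup of $T$ is noetherian, and that $T$ has only finitely many idempotents. The first is immediate, as a maximal subgroup of $T$ is in particular a subgroup of $S$, and so is noetherian by Corollary \ref{subgroup}. Thus the entire weight of the proof falls on showing that the semilattice $E = E(T)$ of idempotents of $T$ is finite, and this is the main obstacle.

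To control $E$, I would first record the order-theoretic translation into right ideals of $S$: for idempotents $e, f \in E$ one has $e \le f$ in the natural partial order (that is, $ef = fe = e$) if and only if $eS^1 \subseteq fS^1$, so that $e \mapsto eS^1$ is an order-embedding of $(E, \le)$ into the principal right ideals of $S$, and distinct idempotents give distinct right ideals. Suppose, for contradiction, that $E$ is infinite. By Ramsey's theorem the infinite poset $(E, \le)$ contains either an infinite antichain or an infinite chain, and an infinite chain in turn contains an infinite strictly ascending or strictly descending sequence of idempotents. Two of these three cases are dispatched at once through the right-ideal picture: an infinite antichain of idempotents yields an infinite antichain of right ideals of $S$, contradicting Corollary \ref{weaklycorollary}; and an infinite strictly ascending chain $e_1 < e_2 < \cdots$ yields an infinite strictly ascending chain $e_1 S^1 \subsetneq e_2 S^1 \subsetneq \cdots$ of right ideals, contradicting the fact that $S$, being right noetherian, is weakly right noetherian (Lemma \ref{weakly}).

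The remaining, and genuinely hard, case is an infinite strictly descending chain $e_1 > e_2 > \cdots$ of idempotents, for which the right-ideal machinery is useless: the ideals $e_i S^1$ now descend, and descending chains of right ideals are not forbidden (indeed the bicyclic monoid shows that such a configuration is compatible with being weakly right noetherian). Here one must work at the level of right congruences. The plan is to consider the right congruence $\rho$ on $S$ generated by $\{(e_i, e_{i+1}) : i \ge 1\}$. Since $S$ is right noetherian, $\rho$ is finitely generated, so by Lemma \ref{finite subset} it is already generated by $X_N = \{(e_i, e_{i+1}) : 1 \le i \le N\}$ for some $N$. In particular $(e_{N+1}, e_{N+2})$ must be a consequence of $X_N$, and the goal is to show that this is impossible, yielding the desired contradiction.

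Establishing this non-consequence is the crux of the argument. I would analyse a putative $X_N$-sequence connecting $e_{N+1}$ and $e_{N+2}$ by means of the rank function $\nu(p) = \sup\{m : e_m p = p\}$ attached to the nested right ideals $e_m S^1$, noting that $\nu(e_m) = m$ since $e_i e_j = e_{\max(i,j)}$. Every pair in $\overline{X_N}$ has both coordinates among $e_1, \ldots, e_{N+1}$, and the plan is to show that a single step of the sequence -- which passes between $e_a s$ and $e_{a+1} s$ for some $a \le N$ and $s \in S^1$ -- cannot raise this rank above $N+1$; since the gluing equalities between steps preserve the element exactly, it would follow that every term of the sequence has rank at most $N+1$, contradicting $\nu(e_{N+2}) = N+2$. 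I expect the main difficulty to lie precisely in this rank-control step: one must rule out that the interplay between the chain relations $e_i e_j = e_{\max(i,j)}$ and an \emph{arbitrary} ambient multiplier $s \in S^1$ drawn from $S$ conspires to manufacture an element of rank exceeding $N+1$, and it is here that the inverse structure of $T$ and the precise combinatorics of $X_N$-sequences must be exploited with care.
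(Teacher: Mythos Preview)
Your reduction and three-case split are correct, and the approach can be completed, but the rank-control step you flag as the crux needs a sharper invariant than $\nu(p)=\sup\{m:e_mp=p\}$. The bound $\nu(p)\le N+1$ is not obviously preserved by a single step: passing from $p=e_{a+1}s$ to $q=e_as$, nothing about $\nu(p)$ alone forbids $e_{N+2}q=q$. The invariant that does propagate is $e_{N+1}p=e_{N+1}$. It holds at the start; and if $p=x_is_i$ with $x_i\in\{e_1,\dots,e_{N+1}\}$ then $e_{N+1}s_i=e_{N+1}x_is_i=e_{N+1}p=e_{N+1}$, whence $e_{N+1}(y_is_i)=e_{N+1}s_i=e_{N+1}$ as well. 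Since $e_{N+1}e_{N+2}=e_{N+2}\ne e_{N+1}$, no term of the sequence can equal $e_{N+2}$, giving your contradiction. This invariant implies your rank bound but is strictly stronger, and it is what the commutativity of the $e_i$ with one another actually buys.

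The paper takes a different and more uniform route: rather than a Ramsey split, it proves directly that $E(T)$ is right noetherian as a semigroup (hence finitely generated, hence finite, by Theorem~\ref{commutative}). Given any congruence $\rho$ on $E(T)$, extend it to the right congruence $\overline\rho$ on $S$, take a finite $X\subseteq\rho$ generating $\overline\rho$, and let $U$ be the finite subsemilattice generated by the idempotents appearing in $X$. The key trick converts an $X$-sequence in $S$ connecting $e$ and $f$ into one whose multipliers lie in $E(T)$, by replacing each $s_i$ with $e_1\cdots e_is_ie$; an induction using commutativity of idempotents shows these lie in $E(T)$, and the resulting sequence connects $e$ to $(e_1\cdots e_k)(ef)$, with a symmetric sequence from $f$. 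This yields that $\rho$ is generated by the finite set $\rho\cap(U\times U)$. Your approach leans on the right-ideal embedding and Ramsey to dispose of two cases cheaply, concentrating all the work in the descending chain; the paper's argument is more self-contained and handles every congruence on $E(T)$ at once without case analysis.
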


\begin{proof}
We show that the semilattice $Y$ of idempotents of $T$ is right noetherian.  
It then follows from Theorem \ref{commutative} that $Y$ is finitely generated and hence finite.
Since, by Corollary \ref{subgroup}, the maximal subgroups of $T$ are noetherian, 
we then have that $T$ is right noetherian by Theorem \ref{inverse}.\par 
So, let $\rho$ be a (right) congruence on $Y.$
Let $\overline{\rho}$ denote the right congruence on $S$ generated by $\rho.$
Since $S$ is right noetherian, $\overline{\rho}$ is generated by some finite subset $X\subseteq\rho.$
Let $U$ be the subsemigroup of $Y$ generated by the set
$$P=\{e\in Y : (e, f)\in\overline{X}\text{ for some }f\in Y\}.$$
Notice that $U$ is finite since $P$ is finite.
We claim that $\rho$ is generated by the set $Z=\rho\cap(U\times U).$ (Note that $X\subseteq Z.$)\par 
Indeed, let $e\,\rho\,f$ with $e\neq f.$
Since $e\,\overline{\rho}\,f,$ there exists an $X$-sequence
$$e=e_1s_1, f_1s_1=e_2s_2, \dots, f_ks_k=f,$$
where $(e_i, f_i)\in\overline{X}$ and $s_i\in S^1$ for $1\leq i\leq k.$\par 
We claim that $e_1\dots e_is_ie\in Y$ for every $i\in\{1, \dots, k\}.$
Indeed, we have that $e_1s_1e=e\in Y$ and, for $i\in\{2, \dots, k\},$ we have
$$e_1\dots e_is_ie=e_1\dots e_{i-1}f_{i-1}s_{i-1}e=f_{i-1}(e_1\dots e_{i-1}s_{i-1}e),$$
so the claim follows by induction.  We now have an $X$-sequence
$$e=e_1(e_1s_1e), f_1(e_1s_1e)=e_2(e_1e_2s_2e), \dots, f_k(e_1\dots e_ks_ke)=(e_1\dots e_k)(ef).$$
By a symmetrical argument, we also have that $\bigl(f, (f_1\dots f_k)(ef)\bigr)$ is a consequence of $X.$
Since $(e_1\dots e_k, f_1\dots f_k)\in Z,$ it follows that $(e, f)$ is a consequence of $Z.$
\end{proof}

\begin{corollary}
\label{semilattice}
Right noetherian semigroups contain no infinite semilattices.
\end{corollary}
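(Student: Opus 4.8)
The plan is to exploit the fact that every semilattice is an inverse semigroup, thereby reducing the statement to results already established. First I would observe that if $Y$ is a semilattice subsemigroup of $S$, then $Y$ is an inverse subsemigroup of $S$. Indeed, $Y$ is a commutative semigroup of idempotents, so its idempotents (which are all of its elements) commute, and each $e\in Y$ is its own inverse, since $e=eee$ and $ee=e$. Thus $Y$ is regular with commuting idempotents, and hence inverse.

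Given this observation, the argument is short. By Proposition \ref{inversesubsemigroup}, if $S$ is right noetherian then the inverse subsemigroup $Y$ is right noetherian. Since $Y$ is commutative, Theorem \ref{commutative} then tells us that $Y$ is finitely generated.

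It remains to note that a finitely generated semilattice is finite. Indeed, if $Y=\langle X\rangle$ with $X$ finite, then every element of $Y$ is a product of finitely many elements of $X$; by commutativity and idempotency, such a product is determined entirely by the non-empty set of factors occurring in it. Hence $|Y|\leq 2^{|X|}-1$, and so $Y$ is finite.

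I do not anticipate any genuine obstacle here: the whole content lies in recognising that semilattices are inverse semigroups, after which Proposition \ref{inversesubsemigroup} and Theorem \ref{commutative} do all the work. One could alternatively extract the conclusion directly from the proof of Proposition \ref{inversesubsemigroup}, in which the semilattice of idempotents of an inverse subsemigroup is already shown to be finite; but routing through the two cited results gives the cleanest deduction.
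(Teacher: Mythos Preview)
Your proposal is correct and is precisely the argument the paper intends: the corollary is stated without proof immediately after Proposition~\ref{inversesubsemigroup}, and your route---semilattice $\Rightarrow$ inverse subsemigroup $\Rightarrow$ right noetherian by Proposition~\ref{inversesubsemigroup} $\Rightarrow$ finitely generated by Theorem~\ref{commutative} $\Rightarrow$ finite---is exactly the intended deduction. Your closing remark is also apt: the proof of Proposition~\ref{inversesubsemigroup} already contains, verbatim, the step ``$Y$ is finitely generated and hence finite'' for the semilattice of idempotents, so one could equally well cite that internal step.
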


It is well known that if a monoid $M$ has a non-unit $x$ such that $xy=1$ for some $y\in M,$
then the elements $y^ix^i$ are distinct and form a semilattice.
Hence, we have:

\begin{corollary}
\label{unit}
Let $M$ be a right noetherian monoid.
Then every right (or left) unit of $M$ is in fact a (two-sided) unit.
Equivalently, the complement of the group of units of $M$ is an ideal.
\end{corollary}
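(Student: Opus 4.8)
The plan is to read the corollary as an immediate consequence of the observation recorded just before it together with Corollary~\ref{semilattice}. First I would dispose of the right-unit case. Suppose $x$ is a right unit, that is, $xy=1$ for some $y\in M$. If $x$ were not a (two-sided) unit, then by that observation the elements $y^ix^i$ would be pairwise distinct and would constitute a semilattice, so $M$ would contain an infinite semilattice. Since $M$ is right noetherian, this is impossible by Corollary~\ref{semilattice}. Hence $x$ must be a unit, which settles the claim for right units.

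Next I would reduce the left-unit case to the one just handled rather than rerunning the semilattice argument. If $y$ is a left unit, say $xy=1$ for some $x\in M$, then $x$ is a right unit and hence, by the previous paragraph, a two-sided unit; multiplying $xy=1$ on the left by $x^{-1}$ then forces $y=x^{-1}$, so $y$ is a unit as well. Thus every one-sided unit of $M$ is a two-sided unit.

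For the equivalent reformulation I would verify that $N=M\setminus U(M)$, the complement of the group of units $U(M)$, is an ideal (if $M$ is itself a group then $N=\emptyset$ and there is nothing to prove). Take $a\in N$ and $m\in M$. If $am$ were a unit, then $a\bigl(m(am)^{-1}\bigr)=1$ would exhibit $a$ as a right unit, hence a unit by the above, contradicting $a\in N$; so $am\in N$. Symmetrically, if $ma$ were a unit then $\bigl((ma)^{-1}m\bigr)a=1$ would make $a$ a left unit, again a contradiction, so $ma\in N$. Therefore $N$ absorbs multiplication on both sides and is an ideal. Conversely, if $N$ is an ideal, then a product of a non-unit can never equal $1\in U(M)$, which recovers at once the statement that one-sided units are two-sided units.

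The substantive content lies entirely in the cited observation and in Corollary~\ref{semilattice}; once these are available the argument is routine, so I do not expect a serious obstacle. The only points requiring care are bookkeeping ones: keeping straight which side the inverse lies on when passing between the two one-sided notions, and handling the degenerate case in which $M$ is a group and the complement of the unit group is empty.
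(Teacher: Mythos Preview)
Your proposal is correct and follows exactly the route the paper takes: the paper's entire argument is the sentence preceding the corollary (the well-known fact that a one-sided-only unit produces the infinite semilattice $\{y^ix^i\}$) together with Corollary~\ref{semilattice}, and you have simply spelled out the details the paper leaves implicit, including the reduction of the left-unit case to the right-unit case and the verification that the complement of the unit group is an ideal.
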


\section{Direct products\nopunct\\}

In this section we investigate under what conditions the direct product $S\times T$ of two semigroups is right noetherian.
Since $S$ and $T$ are homomorphic images of $S\times T,$ by Lemma \ref{quotient} we have:

\begin{lemma}
\label{directfactors}
Let $S$ and $T$ be two semigroups.  If $S\times T$ is right noetherian, then both $S$ and $T$ are right noetherian.
\end{lemma}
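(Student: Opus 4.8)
The plan is to realise each of $S$ and $T$ as a homomorphic image of $S\times T$ and then appeal directly to Lemma \ref{quotient}, exactly as the sentence preceding the statement anticipates.

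First I would consider the coordinate projection $\pi_S\colon S\times T\to S$ defined by $(s,t)\pi_S=s$. This is a semigroup homomorphism, since
$$\bigl((s_1,t_1)(s_2,t_2)\bigr)\pi_S=(s_1s_2,\,t_1t_2)\pi_S=s_1s_2=\bigl((s_1,t_1)\pi_S\bigr)\bigl((s_2,t_2)\pi_S\bigr)$$
for all $(s_1,t_1),(s_2,t_2)\in S\times T$. Fixing any element $t_0\in T$ (which exists as $T$ is non-empty), every $s\in S$ satisfies $(s,t_0)\pi_S=s$, so $\pi_S$ is surjective. Hence $S$ is a homomorphic image of $S\times T$, and by the symmetric argument the projection $\pi_T\colon S\times T\to T$ given by $(s,t)\pi_T=t$ exhibits $T$ as a homomorphic image of $S\times T$ as well.

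Since $S\times T$ is right noetherian by assumption, Lemma \ref{quotient} applied to each of the two projections yields immediately that both $S$ and $T$ are right noetherian. I expect no genuine obstacle here: the entire content is carried by Lemma \ref{quotient}, and the only point demanding even minor care is the surjectivity of the projections, which rests on the standing convention that the semigroups $S$ and $T$ are non-empty (so that a fixed element $t_0\in T$, respectively $s_0\in S$, is available).
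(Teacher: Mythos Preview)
Your proposal is correct and follows exactly the approach indicated in the paper: the paper simply observes, in the sentence preceding the lemma, that $S$ and $T$ are homomorphic images of $S\times T$ and invokes Lemma~\ref{quotient}. Your write-up just makes the projection maps and their surjectivity explicit, which is entirely in line with the intended argument.
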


The direct product of two finitely generated semigroups is not necessarily finitely generated.
Necessary and sufficient conditions for a direct product of two semigroups to be finitely generated were given in \cite{Robertson}.

\begin{thm}\cite[Theorem 2.1, Theorem 8.2]{Robertson}
\label{dpfg}
Let $S$ and $T$ be two semigroups with $S$ infinite.
\begin{enumerate}
 \item If $T$ is infinite, then $S\times T$ is finitely generated if and only if both $S$ and $T$ are finitely generated and $S^2=S$ and $T^2=T.$
 \item If $T$ is finite, then $S\times T$ is finitely generated if and only if $S$ is finitely generated and $T^2=T.$
\end{enumerate}
\end{thm}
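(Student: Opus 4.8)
The plan is to treat the two implications separately, handling the two parts together where possible.

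For necessity (the forward implication in both parts), I would first observe that the coordinate projections $S\times T\to S$ and $S\times T\to T$ are surjective homomorphisms, so if $S\times T$ is finitely generated then so are its homomorphic images $S$ and $T$. To extract the conditions $S^2=S$ and $T^2=T$, I would use the elementary fact that a finitely generated semigroup $U=\langle A\rangle$ has only finitely many indecomposable elements, since $U\setminus A\subseteq U^2$. Applying this to $U=S\times T$, and noting that $(S\times T)^2=S^2\times T^2$, the set of indecomposable elements of $S\times T$ is
$$\bigl((S\setminus S^2)\times T\bigr)\cup\bigl(S\times(T\setminus T^2)\bigr).$$
Finiteness of this set, together with $S$ being infinite, forces $T\setminus T^2=\emptyset$ in both parts; and together with $T$ being infinite (in part (1)) forces $S\setminus S^2=\emptyset$. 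This yields exactly the stated necessary conditions, with no separate constraint on $S^2$ in part (2), since there $T$ is finite and $S\setminus S^2$ is automatically finite once $S$ is finitely generated.

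For the converse, part (2) is the easy case. Assuming $S=\langle A\rangle$ with $A$ finite and $T^2=T$ with $T$ finite, I would show that the finite set $A\times T$ generates $S\times T$. Given $(s,t)$, write $s=a_1\cdots a_p$ with $a_i\in A$; since $T^2=T$ implies $T=T^p$, we may write $t=t_1\cdots t_p$ with $t_i\in T$, and then $(s,t)=(a_1,t_1)\cdots(a_p,t_p)\in\langle A\times T\rangle$.

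Part (1) of the converse is where the real work lies, and I expect it to be the main obstacle. The naive attempt, to show that $A\times B$ generates (where $B$ is a finite generating set for $T$), produces only those $(s,t)$ for which $s$ and $t$ admit generator-expressions of a \emph{common} length, and this can genuinely fail: for $S=T=\mathbb{Z}$ with generators $\{1,-1\}$ the available expression-lengths of an element are constrained modulo $2$, so $(1,0)$ has no common-length expression even though $\mathbb{Z}\times\mathbb{Z}$ is finitely generated. The plan is therefore to augment $A\times B$ by a finite set of connecting (length-adjusting) elements. Concretely, I would try to write $s=a_1\cdots a_\ell$ and $t=b_1\cdots b_\ell$ as generator-expressions of a common length $\ell$ and read off $(s,t)=(a_1,b_1)\cdots(a_\ell,b_\ell)$; when the available lengths do not match, I would invoke $S=S^2$ and $T=T^2$ (which let each generator be re-expanded as a strictly longer product, and hence let either side be lengthened) to pad the shorter side, falling back on the connecting generators to absorb the bounded residue discrepancy that re-expansion alone cannot remove. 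The crux, and the step demanding the most care, is to verify that \emph{finitely many} connecting elements suffice uniformly for all $(s,t)$; this reduces to a combinatorial analysis showing that the sets of achievable expression-lengths in $S$ and in $T$ are eventually periodic with a common modulus, precisely the phenomenon exhibited by the $\mathbb{Z}$ example, so that a fixed finite family of connectors can bridge every mismatch.
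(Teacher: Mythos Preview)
The paper does not actually prove this theorem: it is quoted verbatim from \cite[Theorem~2.1, Theorem~8.2]{Robertson} and used as a black box, so there is no ``paper's own proof'' to compare against. Your proposal therefore cannot be judged against the present paper, only against the original source.

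That said, your plan is sound and essentially matches the argument in \cite{Robertson}. The necessity direction and part~(2) of sufficiency are exactly as you describe. For part~(1) sufficiency, your diagnosis of the obstruction (mismatched expression-lengths) and your $\mathbb{Z}$ example are on target, but the remedy can be made simpler than a periodicity analysis: since $S^2=S$, each generator $a\in A$ can be written as $a=a'a''$ with $a',a''\in S$; enlarging $A$ to include all such $a',a''$ (and similarly for $B$) gives a finite generating set in which every generator admits both a length-$1$ and a length-$2$ expression. With this enlarged set one can pad any product by exactly one factor at a time, so the set of achievable lengths for any element becomes a final segment $\{n:n\geq m\}$ rather than merely an eventually periodic set, and the common-length argument goes through directly without residual ``connecting'' elements. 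Your more elaborate periodicity route would also work, but it is more machinery than the problem requires.
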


This raises the question as to whether there might exist a direct product $S\times T$ of two finitely generated right noetherian semigroups such that $S\times T$ is right noetherian but not finitely generated.
However, this turns out to not be the case.

\begin{lemma}
\label{dp factor indecomposable}
Let $S$ and $T$ be two semigroups.
If $S$ is infinite and $S\times T$ is right noetherian, then $T^2=T.$ 
\end{lemma}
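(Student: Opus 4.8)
The plan is to argue by contradiction, exploiting the fact recorded in Corollary \ref{indecomposable} that a right noetherian semigroup has only finitely many indecomposable elements. Accordingly, I would assume $T^2 \neq T$ and aim to exhibit infinitely many indecomposable elements of the right noetherian semigroup $S \times T$, which is the required contradiction.

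The one structural observation needed is a coordinate-wise description of the decomposable elements of the direct product. Since
\[
(S\times T)^2 = \{(s_1 s_2,\, t_1 t_2) : s_1, s_2 \in S,\ t_1, t_2 \in T\} = S^2 \times T^2,
\]
an element $(s, t)$ of $S \times T$ is decomposable if and only if both $s \in S^2$ and $t \in T^2$; equivalently, $(s,t)$ is indecomposable precisely when $s \notin S^2$ or $t \notin T^2$.

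With this in hand, assuming $T^2 \neq T$ I would choose an indecomposable element $t_0 \in T \setminus T^2$. Then for every $s \in S$ the second coordinate of $(s, t_0)$ lies outside $T^2$, so $(s, t_0) \notin S^2 \times T^2 = (S\times T)^2$; that is, each $(s, t_0)$ is indecomposable in $S \times T$. Since $S$ is infinite, the set $\{(s, t_0) : s \in S\}$ is then an infinite collection of indecomposable elements of $S \times T$, contradicting Corollary \ref{indecomposable}. Hence $T^2 = T$.

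I do not expect any genuine obstacle here: the entire argument rests on the identity $(S\times T)^2 = S^2 \times T^2$, which is immediate from the definition of multiplication in a direct product, together with the already-established finiteness of the indecomposable elements. A single indecomposable element of $T$ produces an infinite ``column'' of indecomposables in the product, and the finiteness result supplies the contradiction. The only point needing a word of care is the coordinate-wise factorisation of the square, and that is routine.
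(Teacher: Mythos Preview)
Your proposal is correct and follows essentially the same approach as the paper: both pick an indecomposable $t\in T\setminus T^2$, observe that $(s,t)$ is then indecomposable in $S\times T$ for every $s\in S$, and invoke Corollary~\ref{indecomposable} to obtain a contradiction with the infinitude of $S$. Your version simply makes explicit the identity $(S\times T)^2=S^2\times T^2$ that the paper leaves implicit.
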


\begin{proof}
If $T^2\neq T,$ then $T$ has an indecomposable element $t.$
Therefore, we have that $(s, t)$ is indecomposable for every $s\in S,$ so $S\times T$ has infinitely many indecomposable elements.
It follows from Corollary \ref{indecomposable} that $S\times T$ is not right noetherian.
\end{proof}

\begin{corollary}
Let $S$ and $T$ be two finitely generated right noetherian semigroups.
If the direct product $S\times T$ is right noetherian, then it is finitely generated.
\end{corollary}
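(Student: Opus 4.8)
The plan is a short case analysis that combines Robertson's criterion (Theorem \ref{dpfg}) with the decomposability condition supplied by Lemma \ref{dp factor indecomposable}. Observe first that the statement is symmetric in $S$ and $T$: the hypotheses and the conclusion are unchanged under swapping the factors, and $S\times T\cong T\times S$ preserves being right noetherian. So I may argue by cases on how many of $S$ and $T$ are infinite, treating the two ``mixed'' cases as a single case via this symmetry.

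If both $S$ and $T$ are finite, then $S\times T$ is finite, hence finitely generated, and there is nothing to prove. Suppose then that at least one factor is infinite; by the symmetry just noted I may assume $S$ is infinite. Since $S\times T$ is right noetherian and $S$ is infinite, Lemma \ref{dp factor indecomposable} gives $T^2=T$. Now I split into two subcases. If $T$ is finite, then $S$ is finitely generated by hypothesis and $T^2=T$, so Theorem \ref{dpfg}(2) yields that $S\times T$ is finitely generated. If instead $T$ is also infinite, I apply Lemma \ref{dp factor indecomposable} a second time, but to the product $T\times S$ (which is right noetherian, being isomorphic to $S\times T$) with $T$ playing the role of the infinite first factor; this gives $S^2=S$. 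As both $S$ and $T$ are then finitely generated and satisfy $S^2=S$ and $T^2=T$, Theorem \ref{dpfg}(1) shows that $S\times T$ is finitely generated.

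I do not expect a genuine obstacle here: the content is entirely in Lemma \ref{dp factor indecomposable}, which has already been established, and the role of the present argument is simply to extract from it the two conditions $S^2=S$ and $T^2=T$ demanded by Robertson's theorem. The only points requiring a little care are the appeal to symmetry---used both to reduce the case ``$S$ finite, $T$ infinite'' to ``$S$ infinite, $T$ finite'' and to apply the lemma to the second factor in the doubly-infinite case---and the separate, trivial treatment of the case where both factors are finite, which falls outside the scope of Theorem \ref{dpfg} as stated.
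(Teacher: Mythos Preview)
Your proof is correct and follows essentially the same approach as the paper: both argue by cases on which factors are infinite, invoke Lemma~\ref{dp factor indecomposable} to obtain $T^2=T$ (and, in the doubly-infinite case, $S^2=S$), and then appeal to the appropriate part of Theorem~\ref{dpfg}. Your version is slightly more explicit about the symmetry and about the trivial case where both factors are finite, but the substance is identical.
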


\begin{proof}
Assume that $S$ is infinite.  Then $T^2=T$ by Lemma \ref{dp factor indecomposable}.
If $T$ is infinite, then $S^2=S$ by Lemma \ref{dp factor indecomposable}, and hence $S\times T$ is finitely generated by Theorem \ref{dpfg}(1).
If $T$ is finite, then $S\times T$ is finitely generated by Theorem \ref{dpfg}(2).
\end{proof}

It can now be easily seen that the converse of Lemma \ref{directfactors} does not hold.
Indeed, for any infinite right noetherian semigroup $S,$
the direct product $\mathbb{N}\times S,$ where $\mathbb{N}$ is the free monogenic semigroup,
is not right noetherian since $\mathbb{N}$ contains an indecomposable element.
We now present a more striking example:

\begin{ex}
\label{dpex}
\textit{If $S$ is any infinite right noetherian semigroup and $T=\{a, b\}$ is the two-element right zero semigroup, then $S\times T$ is not right noetherian.}\par
Let $\rho$ be the right congruence on $S\times T$ generated by the set 
$$X=\bigl\{\bigl((s, a), (s, b)\bigr) : s\in S\bigr\}.$$
We claim that $X$ is a minimal generating set for $\rho.$
Indeed, if $s\in S$ and we write $(s, a)$ as a product $(s, a)=uv$ for some $u, v\in S\times T,$
we have that $u=(x, c)$ for some $x\in S$ and $c\in T,$ and $v=(y, a)$ for some $y\in S.$
Now, applying the generating pair $\bigl((x, c), (x, d)\bigr),$ 
where $c\neq d\in T,$ to $(s, a),$ we obtain $(x, d)(y, a)=(s, a).$
Therefore, we cannot have that $\bigl((s, a), (s, b)\bigr)$ is a consequence of $X\!\setminus\!\bigl\{\bigl((s, a), (s, b)\bigr)\bigr\}.$
\end{ex}

\begin{remark}
\label{regularremark}
If the semigroup $S$ in Example \ref{dpex} is a group, then $S\times T$ is a completely simple semigroup with two maximal subgroups that are both noetherian.
Hence, Theorem \ref{inverse} does not generalise to regular semigroups.
\end{remark}

The direct product of a right noetherian semigroup with a right noetherian monoid may not be right noetherian;
in fact, Example \ref{dpex} shows that there exist finite semigroups $S$ such that the direct product of $S$ with any infinite monoid $M$ is not right noetherian.
This leads us to make the following definition.

\begin{defn}
Let $S$ be a right noetherian semigroup.
We say that $S$ {\em preserves right noetherian in direct products} if is satisfies the following:
For every monoid $M,$ the direct product $S\times M$ is right noetherian if and only if $M$ is right noetherian.
\end{defn}

We will show in what follows that a large class of right noetherian semigroups have the above property.
We have seen that it easy to find right noetherian semigroups that do not have the property, 
but it turns out that this is not the case for monoids, and so we raise the following question.

\begin{prob}
Do there exist right noetherian monoids $M$ and $N$ such that the direct product $M\times N$ is not right noetherian?
\end{prob}

We now show how, given a right congruence on the direct product $S\times M$ of a semigroup $S$ and a monoid $M,$ 
one can naturally construct a family of right congruences on $S.$
So, let $\rho$ be a right congruence on $S\times M.$
For each $m\in M,$ we define a right congruence $\rho_m^S$ on $S$ by
$$s\,\rho_m^S\,t\iff(s, m)\,\rho\,(t, m).$$
To see that $\rho_m^S$ is a right congruence, let $s\,\rho_m^S\,t$ and $u\in S.$
Since $\rho$ is a right congruence on $S,$ we have that
$$(su, m)=(s, m)(u, 1)\,\rho\,(t, m)(u, 1)=(tu, m),$$
so $su\,\rho_m^S\,tu.$\par
When there is no danger of confusion, we will usually just write $\rho_m$ for $\rho_m^S.$\par
The following result shows that the property of preserving right noetherian in direct products is a finiteness condition for monoids.

\begin{thm}
\label{dpfinitemonoid}
Let $M$ be a finite monoid.  
Then, for any semigroup $S,$ the direct product $M\times S$ is right noetherian if and only if $S$ is right noetherian.
\end{thm}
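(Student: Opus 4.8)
The plan is to prove the two implications separately, the forward one being immediate and the converse being the substance. If $M\times S$ is right noetherian, then its direct factor $S$ is right noetherian by Lemma \ref{directfactors}. For the converse I would assume $S$ is right noetherian and aim to show $M\times S$ is. My first move is to reduce to the case of a monoid: by Corollary \ref{adjoinidentity}, $S^1$ is a right noetherian monoid, and it suffices to prove that $M\times S^1$ is right noetherian. Indeed, $M\times S$ is a subsemigroup of $M\times S^1$ whose complement $M\times(S^1\setminus S)$ is finite (since $M$ is finite and $S^1\setminus S$ is empty or a single point), so Theorem \ref{large} transfers right noetherianity between the two.

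The conceptual heart of the argument is to convert a statement about semigroup right congruences into one about congruences on a finitely generated monoid act, where Proposition \ref{acts} can be applied. The key observation, and the only place the finiteness of $M$ is used essentially, is that $M\times S^1$ is a finitely generated right $S^1$-act: letting $S^1$ act on the right by $(m,s)\cdot u=(m,su)$ (that is, by right multiplication by $(1,u)$), it is generated by the finite set $\{(m,1):m\in M\}$. Moreover, every right congruence $\rho$ of the semigroup $M\times S^1$ is in particular a congruence of this $S^1$-act, since closure under right multiplication by all of $M\times S^1$ in particular gives closure under multiplication by the elements $(1,u)$. Because $S^1$ is a right noetherian monoid, Proposition \ref{acts} then tells us that the finitely generated $S^1$-act $(M\times S^1)/\rho$ is finitely presented, whence $\rho$ is finitely generated \emph{as a congruence of the $S^1$-act $M\times S^1$} by \cite[Proposition 3.9]{Miller2}.

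It then remains to transfer this back to semigroup right congruences, and this passage between the two notions of ``generated'' is the step I expect to require the most care. Suppose a finite set $Y$ generates $\rho$ as an $S^1$-act congruence. Since every $S^1$-act operation $a\mapsto a\cdot u=a(1,u)$ is a right multiplication in $M\times S^1$, the semigroup right congruence generated by $Y$ is itself an $S^1$-act congruence containing $Y$, and hence contains the smallest such, namely $\rho$; as it is also contained in $\rho$, the two coincide. Thus $\rho$ is a finitely generated semigroup right congruence, so $M\times S^1$ is right noetherian, and the reduction above yields that $M\times S$ is right noetherian. The only subtlety worth checking carefully is precisely this comparison: finite generation as an act congruence (using only the $(1,u)$-multiplications) does imply finite generation as a semigroup right congruence, exactly because the act operations form a subcollection of the semigroup's right multiplications, so a generating set for the weaker closure operation remains a generating set for the stronger one.
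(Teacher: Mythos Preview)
Your argument is correct and takes a genuinely different route from the paper. The paper proves the converse by a direct, hands-on construction: given a right congruence $\rho$ on $M\times S$, it forms the right congruences $\rho_m$ on $S$ (one for each $m\in M$), finds finite generating sets for these, then for each pair $m\neq n$ introduces the right ideal $I(m,n)=\{s\in S:(m,s)\,\rho\,(n,t)\text{ for some }t\}$, uses weak right noetherianity to extract finite generating sets, and assembles all of this into an explicit finite generating set for $\rho$. No reduction to monoids and no act-theoretic machinery is invoked.

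Your approach is more conceptual: reduce to the monoid case via Theorem~\ref{large}, observe that $M\times S^1$ is a finitely generated $S^1$-act, and then invoke Proposition~\ref{acts} to conclude that every $S^1$-act congruence on it is finitely generated, which suffices because semigroup right congruences are in particular $S^1$-act congruences and the closure under right multiplication by $(1,u)$ is enough. One point worth making explicit for the citation of \cite[Proposition~3.9]{Miller2} to bite is that $M\times S^1$ is in fact the \emph{free} $S^1$-act on the finite basis $\{(m,1):m\in M\}$, since as an $S^1$-act it decomposes as the disjoint union $\bigsqcup_{m\in M}\{m\}\times S^1$ of copies of the regular act; this is what guarantees that finite presentability of the quotient forces finite generation of the kernel congruence. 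With that observation your proof goes through cleanly. The trade-off is that the paper's argument is self-contained and yields an explicit generating set, while yours is shorter and shows the result is really a formal consequence of the act-theoretic characterisation already established in Proposition~\ref{acts}.
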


\begin{proof}
Let $S$ be a right noetherian semigroup, and let $\rho$ be a right congruence on $M\times S$.
Since $S$ is right noetherian, each $\rho_m$ is generated by a finite set $X_m$.
Let $$Y_m=\bigl\{\bigl((m, x), (m, y)\bigr) : (x, y)\in X_m\bigr\}.$$
Now let $Q$ be the set 
$$\{(m, n)\in M\times M : m\neq n, (m, s)\,\rho\,(n, t)\text{ for some }s, t\in S\}.$$
For each pair $(m, n)\in Q$, we define a set 
$$I(m, n)=\{s\in S : (m, s)\,\rho\,(n, t)\text{ for some } t\in S\}.$$
We claim that $I(m, n)$ is a right ideal of $S.$
Indeed, let $s\in I(m, n)$ and $u\in S.$
We then have that $(m, s)\,\rho\,(n, t)$ for some $t\in S,$ and therefore
$$(m, su)=(m, s)(1, u)\,\rho\,(n, t)(1, u)=(n, tu),$$
so $su\in I(m, n).$\par
Since $S$ is weakly right noetherian, we have that $I(m, n)$ is generated (as a right ideal) by some finite set $P(m ,n).$ 
For each $p\in P(m, n),$ choose $\alpha_{m, n}(p)\in S$ such that
$$(m, p)\,\rho\,(n, \alpha_{m, n}(p)).$$
Let $H$ be the finite set 
$$\bigl\{\bigl((m, p),(n, \alpha_{m, n}(p))\bigr) : (m, n)\in Q, p\in P(m, n)\bigr\}.$$
We claim that $\rho$ is generated by the finite set 
$$Y=\biggl(\bigcup_{m\in M}Y_m\biggr)\cup H.$$
Indeed, let $(m, s)\,\rho\,(n, t)$ with $(m, s)\neq(n, t).$
Suppose first that $m=n.$ 
Since $s\,\rho_m\,t$ with $s\neq t,$ there exists an $X_m$-sequence
$$s=x_1s_1, y_1s_1=x_2s_2, \dots, x_ks_k=t.$$
Therefore, we have a $Y_m$-sequence
$$(m, s)=(m, x_1)(1, s_1), (m, y_1)(1, s_1)=(m, x_2)(1, s_2), \dots, (m, y_k)(1, s_k)=(m, t).$$
Now suppose that $m\neq n.$
We claim that there exists $s^{\prime}\in S$ such that the pair $\bigl((m, s), (n, s^{\prime})\bigr)$ is a consequence of $H.$
Indeed, since $s\in I(m, n)$, we have that $s=pu$ for some $p\in P(m, n)$ and $u\in S^1.$
Let $s^{\prime}=\alpha_{m, n}(p)u.$
Either $u=1,$ in which case $s=p$ and $\bigl((m, s),(n, s^{\prime})\bigr)\in H,$
or we have an $H$-sequence 
$$(m, s)=(m, p)(1, u), (n, \alpha_{m, n}(p))(1, u)=(n, s^{\prime}).$$
It now follows that $(n, s^{\prime})\,\rho\,(n, t)$, so $s^{\prime}\,\rho_n\,t$.
Therefore, either $s^{\prime}=t$ or there exists a $Y_n$-sequence connecting $(n, s^{\prime})$ and $(n, t).$
Hence, $\bigl((m, s), (n, t)\bigr)$ is a consequence of $Y.$
\end{proof}

We now exhibit an example of a right noetherian semigroup with a finitely generated subsemigroup that is not right noetherian.

\begin{ex}
Let $S$ be any infinite finitely generated right noetherian semigroup,
and let $T$ be the right zero semigroup on $\{a, b\}$.
By Theorem \ref{dpfinitemonoid}, we have that $S\times T^1$ is right noetherian.
Now, we showed in Example \ref{dpex} that the subsemigroup $S\times T$ of $S\times T^1$ is not right noetherian. 
However, since $T^2=T$, the direct product $S\times T$ is finitely generated by Theorem \ref{dpfg}.
\end{ex}

We now prove some lemmas that will be useful in the remainder of the section.

\begin{lemma}
\label{dpquotient}
Let $S$ be a semigroup and let $T$ be a homomorphic image of $S.$
If $S$ preserves right noetherian in direct products, then so does $T.$
\end{lemma}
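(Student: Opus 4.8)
The plan is to exploit the two compatibility results already established for homomorphic images, namely Lemma~\ref{quotient} (right noetherianity passes to homomorphic images) and Lemma~\ref{directfactors} (right noetherianity of a direct product passes to its factors), together with the hypothesis on $S$.

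First I would record that the statement is even meaningful. By definition, a semigroup that preserves right noetherian in direct products is itself right noetherian, so $S$ is right noetherian; hence its homomorphic image $T$ is right noetherian by Lemma~\ref{quotient}, and it makes sense to ask whether $T$ preserves right noetherian in direct products. Fix a surjective homomorphism $\theta\colon S\to T$.

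Next I would fix an arbitrary monoid $M$ and verify both directions of the biconditional in the definition applied to $T$. The implication that $T\times M$ right noetherian forces $M$ right noetherian is immediate from Lemma~\ref{directfactors}. For the converse, suppose $M$ is right noetherian. Since $S$ preserves right noetherian in direct products, $S\times M$ is right noetherian. The key observation is that the map
$$\theta\times\mathrm{id}_M\colon S\times M\to T\times M,\qquad (s, m)\mapsto(s\theta, m),$$
is a surjective homomorphism, so $T\times M$ is a homomorphic image of $S\times M$ and is therefore right noetherian by Lemma~\ref{quotient}. Combining the two directions shows that $T\times M$ is right noetherian if and only if $M$ is right noetherian, i.e.\ $T$ preserves right noetherian in direct products.

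The whole argument is essentially a short chase through the two lemmas, so I do not anticipate a serious obstacle; the only point requiring (routine) care is checking that $\theta\times\mathrm{id}_M$ is a well-defined surjective homomorphism, which follows componentwise from the corresponding properties of $\theta$ and the identity on $M$.
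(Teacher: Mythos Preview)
Your proof is correct and follows the same approach as the paper: the key step in both is that $T\times M$ is a homomorphic image of $S\times M$ via $\theta\times\mathrm{id}_M$, so Lemma~\ref{quotient} transfers right noetherianity. The paper's proof is terser, recording only this main direction, whereas you also spell out why $T$ is right noetherian and why the reverse implication holds via Lemma~\ref{directfactors}; these additions are sound but routine.
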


\begin{proof}
Let $M$ be a right noetherian monoid.
Since $S\times M$ is right noetherian and $T\times M$ is a homomorphic image of $S\times M,$
we have that $T\times M$ is right noetherian by Lemma \ref{quotient}.
\end{proof}

\begin{lemma}
\label{dpdp}
Let $S$ be a semigroup and let $M$ be a monoid.
If both $S$ and $M$ preserve right noetherian in direct products, then so does the direct product $S\times M.$
\end{lemma}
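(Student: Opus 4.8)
The plan is to exploit the associativity of the direct product together with the fact that a product of monoids is again a monoid. First I would note that the definition of ``preserves right noetherian in direct products'' is only stated for right noetherian semigroups, so I must check that $S\times M$ is itself right noetherian before the property can even be asserted of it. Since $M$ preserves right noetherian in direct products, $M$ is in particular right noetherian; and since $S$ preserves right noetherian in direct products and $M$ is a right noetherian monoid, the product $S\times M$ is right noetherian. Thus $S\times M$ is a right noetherian semigroup, and it makes sense to ask whether it has the preserving property.

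Now fix an arbitrary monoid $N$; I must show that $(S\times M)\times N$ is right noetherian if and only if $N$ is. The forward implication is immediate and uses none of the hypotheses: if $(S\times M)\times N$ is right noetherian, then $N$ is right noetherian by Lemma \ref{directfactors}. For the converse, suppose $N$ is right noetherian. I would invoke the natural isomorphism $(S\times M)\times N\cong S\times(M\times N)$ and argue on the right-hand side. Since $M$ and $N$ are both monoids, $M\times N$ is a monoid; and as $M$ preserves right noetherian in direct products with $N$ a right noetherian monoid, $M\times N$ is right noetherian. Applying the preserving property of $S$ to the right noetherian monoid $M\times N$ then gives that $S\times(M\times N)$ is right noetherian, whence so is $(S\times M)\times N$.

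The computations are entirely routine, so the one point requiring care --- and the step I would flag as the potential pitfall --- is the monoid bookkeeping demanded by the definition. The preserving property of $S$ may only be applied to a second factor that is a \emph{monoid}, so it is essential that $M\times N$ be a monoid rather than merely a semigroup; this is precisely why the hypotheses place $M$ and $N$ among the monoids (while $S$ is allowed to be only a semigroup), guaranteeing that $M\times N$ is a monoid. Provided one keeps track of where a monoid structure is actually needed, no further difficulty arises.
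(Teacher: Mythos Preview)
Your proof is correct and follows essentially the same route as the paper: use associativity $(S\times M)\times N\cong S\times(M\times N)$, apply the preserving property of $M$ to get $M\times N$ right noetherian, then apply the preserving property of $S$. You are simply more thorough than the paper in explicitly verifying the prerequisite that $S\times M$ is itself right noetherian and in treating the (trivial) forward direction separately; the paper leaves these implicit.
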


\begin{proof}
Let $N$ be a right noetherian monoid.
Since $M$ preserves right noetherian in direct products, we have that $M\times N$ is right noetherian.
Now, by the assumption that $S$ preserves right noetherian in direct products, 
we have that $(S\times M)\times N\cong S\times(M\times N)$ is right noetherian.
\end{proof}

\begin{lemma}
\label{dpidealextension}
Let $S$ be a semigroup and let $I$ be an ideal of $S.$
If both $I$ and the Rees quotient $S/I$ preserve right noetherian in direct products, then so does $S.$
\end{lemma}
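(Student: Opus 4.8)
The plan is to verify the two halves of the definition of preserving right noetherian in direct products for $S$, reducing everything to the ideal extension result (Proposition \ref{idealextension}) applied to a suitable ideal of $S \times M$.

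First I would record that $S$ is itself right noetherian, since the definition only speaks of right noetherian semigroups. As $I$ and $S/I$ preserve right noetherian in direct products, they are in particular right noetherian, so Proposition \ref{idealextension} gives at once that $S$ is right noetherian.

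Now fix an arbitrary monoid $M$; I must show that $S \times M$ is right noetherian if and only if $M$ is. The forward implication is immediate: if $S \times M$ is right noetherian, then $M$ is right noetherian by Lemma \ref{directfactors}. So the substance lies in the converse. Assume $M$ is right noetherian. The key observation is that $I \times M$ is an ideal of $S \times M$, because $I$ is an ideal of $S$ and hence any product $(i, m)(s, n) = (is, mn)$ or $(s, n)(i, m) = (si, nm)$ stays in $I \times M$ as $is, si \in I$. I would then apply Proposition \ref{idealextension} to this ideal, so that it remains only to show that both $I \times M$ and the Rees quotient $(S \times M)/(I \times M)$ are right noetherian.

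That $I \times M$ is right noetherian is immediate from the hypothesis that $I$ preserves right noetherian in direct products together with $M$ being right noetherian. For the Rees quotient, the main point — and the step I expect to require the most care — is to show that $(S \times M)/(I \times M)$ is a homomorphic image of $(S/I) \times M$. I would define a map $(S/I) \times M \to (S \times M)/(I \times M)$ sending $(\bar{s}, m)$ to the class of $(s, m)$ when $s \in S \setminus I$, and sending $(0, m)$ to the zero class, and then check case by case that this is a well-defined surjective homomorphism. The only subtlety is that a product $(\bar{s}, m)(\bar{t}, n)$ collapses to the zero of $S/I$ exactly when $st \in I$, which is precisely when $(st, mn) \in I \times M$; everything else follows routinely. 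Granting this, since $S/I$ preserves right noetherian in direct products and $M$ is right noetherian, the product $(S/I) \times M$ is right noetherian, and hence so is its homomorphic image $(S \times M)/(I \times M)$ by Lemma \ref{quotient}. Proposition \ref{idealextension} then yields that $S \times M$ is right noetherian, completing the converse and the proof.
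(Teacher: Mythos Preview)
Your proof is correct and follows essentially the same route as the paper: take $J = I \times M$ as an ideal of $S \times M$, observe that $(S \times M)/J$ is a homomorphic image of $(S/I) \times M$, and apply Proposition~\ref{idealextension}. You are simply more explicit than the paper in verifying that $S$ itself is right noetherian and in handling the (trivial) forward direction via Lemma~\ref{directfactors}, both of which the paper leaves implicit.
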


\begin{proof}
Let $M$ be a right noetherian monoid, and let $J=I\times M.$  
We have that $J$ is an ideal of $S\times M,$ and $J$ is right noetherian since $I$ preserves right noetherian in direct products. 
We have a surjective homomorphism 
$$(S/I)\times M\to(S\times M)/J, ([s]_I, m)\mapsto [(s, m)]_J.$$
Since $(S/I)\times M$ is right noetherian, we have that $(S\times M)/J$ is right noetherian by Lemma \ref{quotient}.
It now follows from Proposition \ref{idealextension} that $S\times M$ is right noetherian.
\end{proof}

\begin{lemma}
\label{dpzero}
Let $S$ be a semigroup.  Then $S$ preserves right noetherian in direct products if and only if $S^0$ does.
\end{lemma}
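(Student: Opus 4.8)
The plan is to prove both directions of the biconditional, taking advantage of the close structural relationship between $S$ and $S^0 = S \cup \{0\}$. The key observation is that $S^0$ is the Rees quotient-like object obtained by adjoining a zero, and that $S$ sits inside $S^0$ as a subsemigroup whose complement $\{0\}$ is a (trivially right noetherian) ideal. I would first establish the easier direction and then the harder one.

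**For the direction** that if $S^0$ preserves right noetherian in direct products then so does $S$: let $M$ be a right noetherian monoid and suppose we must show $S \times M$ is right noetherian. The natural move is to relate $(S \times M)$ to $(S^0 \times M)$. I would observe that $S^0 \times M$ is right noetherian by hypothesis (since $M$ is right noetherian and $S^0$ preserves the property), and then exhibit $S \times M$ as a subsemigroup of $S^0 \times M$ whose complement is controllable. In fact $\{0\} \times M$ is an ideal of $S^0 \times M$, and its complement is exactly $S \times M$. The cleanest route is to apply Corollary \ref{rnideal}: if $\{0\}\times M$ is a right noetherian ideal with complement $S \times M$, then $S^0\times M$ right noetherian forces $S\times M$ right noetherian. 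So I would check that $\{0\}\times M \cong M$ (with zero multiplication from the $S^0$ side wait — here $\{0\}\times M$ has multiplication $(0,m)(0,n)=(0,mn)$, so it is isomorphic to $M$, hence right noetherian), apply the corollary, and conclude. The reverse half of preservation (deducing $M$ right noetherian from $S\times M$ right noetherian) follows from Lemma \ref{directfactors}.

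**For the direction** that if $S$ preserves the property then $S^0$ does: here I must show that for every right noetherian monoid $M$, the product $S^0 \times M$ is right noetherian. I would use the ideal-extension machinery of Lemma \ref{dpidealextension} or Proposition \ref{idealextension}. Note that $S$ is an ideal of $S^0$ and the Rees quotient $S^0/S$ is the two-element semigroup $\{[S], 0\}$ which is finite. I expect the most natural argument is: $S^0 \times M$ has ideal $S \times M$, which is right noetherian because $S$ preserves the property and $M$ is right noetherian; and the Rees quotient $(S^0 \times M)/(S \times M)$ is a homomorphic image of $(S^0/S) \times M$, a product of a finite semigroup with $M$. By Theorem \ref{dpfinitemonoid}, $(S^0/S)\times M$ is right noetherian, so its quotient is too by Lemma \ref{quotient}; then Proposition \ref{idealextension} gives that $S^0 \times M$ is right noetherian.

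**The main obstacle** I anticipate is verifying cleanly that $S \times M$ really is an ideal of $S^0 \times M$ and that the Rees quotient $(S^0\times M)/(S\times M)$ is the appropriate quotient of $(S^0/S)\times M$ — one must check the multiplication on the adjoined zero matches up and that no elements are inadvertently identified or separated. This is the kind of routine-but-delicate bookkeeping where the zero of $S^0$ interacts with $M$. Once that identification is pinned down, both directions reduce to citing Theorem \ref{dpfinitemonoid}, Lemma \ref{quotient}, Proposition \ref{idealextension}, and Corollary \ref{rnideal}, and no genuinely new combinatorial argument with $X$-sequences should be required.
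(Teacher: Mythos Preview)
Your first direction is correct and is exactly the paper's argument: $\{0\}\times M$ is a right noetherian ideal of $S^0\times M$ (isomorphic to $M$), so Corollary~\ref{rnideal} applies.

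Your second direction, however, contains a genuine error. You assert that $S$ is an ideal of $S^0$ and hence that $S\times M$ is an ideal of $S^0\times M$. This is false: for any $s\in S$ we have $s\cdot 0=0\notin S$, so $S$ is \emph{not} an ideal of $S^0$; likewise $(s,m)(0,n)=(0,mn)\notin S\times M$. The Rees-quotient machinery you invoke (Lemma~\ref{dpidealextension} or Proposition~\ref{idealextension}) therefore does not apply with this choice of ideal.

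The fix is that no separate argument is needed at all. Corollary~\ref{rnideal} is a \emph{biconditional}: once you know $\{0\}\times M$ is a right noetherian ideal of $S^0\times M$ with complement $S\times M$, you get that $S^0\times M$ is right noetherian if and only if $S\times M$ is right noetherian. Both directions of the lemma follow immediately. This is precisely the paper's one-line proof; you had the right ideal in hand for direction one and simply failed to notice it already did the work for direction two.
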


\begin{proof}
Let $M$ be a right noetherian monoid. 
Since $\{0\}\times M$ is a right noetherian ideal of $S^0\times M,$
it follows from Corollary \ref{rnideal} that $S\times M$ is right noetherian if and only $S^0\times M$ is right noetherian.
\end{proof}

\begin{remark}
The analogue of Lemma \ref{dpzero} for $S^1$ does not hold.
Indeed, the two-element right zero semigroup $T$ does not preserve right noetherian in direct products but $T^1$ does.
\end{remark}

\begin{lemma}
\label{dpf}
Let $M$ and $N$ be two right noetherian monoids, and let $\rho$ be a right congruence on $M\times N.$
Then there exists a finite set $X\subseteq M$ with the following property:
for each $m\in M,$ there exist $x\in X,$ $m^{\prime}\in M$ such that $m=xm^{\prime}$ and $\rho_m=\rho_x.$
\end{lemma}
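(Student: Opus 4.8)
The plan is to first reduce the statement to a finiteness assertion about the family of right congruences $\{\rho_m : m \in M\}$ on $N$, and then to establish that finiteness by transporting the family to a family of right ideals of $M$.

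For $m \in M$ recall that $\rho_m$ is the right congruence on $N$ given by $n\,\rho_m\,n' \iff (m,n)\,\rho\,(m,n')$. The first observation I would record is a monotonicity property: if $m = xm'$ with $m' \in M$, then $\rho_x \subseteq \rho_m$. Indeed, if $(n,n') \in \rho_x$, then $(x,n)\,\rho\,(x,n')$, and right-multiplying by $(m',1) \in M \times N$ gives $(m,n)\,\rho\,(m,n')$. For a right congruence $\tau$ on $N$ set $I_\tau = \{m \in M : \tau \subseteq \rho_m\}$; monotonicity shows each $I_\tau$ is a right ideal of $M$ (if $\tau \subseteq \rho_m$ and $u \in M$ then $\tau \subseteq \rho_m \subseteq \rho_{mu}$). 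Granting that $\mathcal{S} := \{\rho_m : m \in M\}$ is finite, the lemma follows quickly: for each value $\tau \in \mathcal{S}$ the right ideal $I_\tau$ is finitely generated, say $I_\tau = G_\tau M$ with $G_\tau$ finite, because $M$ is weakly right noetherian (Lemma \ref{weakly}); I would then take $X = \bigcup_{\tau \in \mathcal{S}} G_\tau$. For arbitrary $m$, writing $\tau = \rho_m$ we have $m \in I_\tau = G_\tau M$, so $m = xm'$ with $x \in G_\tau \subseteq X$; then $\rho_x \subseteq \rho_m$ by monotonicity, while $x \in I_\tau$ gives $\tau = \rho_m \subseteq \rho_x$, whence $\rho_x = \rho_m$, as required.

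It therefore remains to prove that $\mathcal{S}$ is finite, and this is the heart of the argument. The key step is the map $\Psi \colon \mathcal{S} \to \{\text{right ideals of } M\}$, $\rho_m \mapsto I_{\rho_m}$. I would show $\Psi$ is an order-reversing bijection onto its image $\mathcal{I}$: that $\tau \subseteq \tau'$ implies $I_{\tau'} \subseteq I_\tau$ is immediate, and conversely, if $\tau = \rho_{m_1}$ and $\tau' = \rho_{m_2}$ lie in $\mathcal{S}$ and $I_{\tau'} \subseteq I_\tau$, then $m_2 \in I_{\tau'} \subseteq I_\tau$ forces $\tau \subseteq \rho_{m_2} = \tau'$; in particular $\Psi$ is injective and reflects the order. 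Thus $(\mathcal{S},\subseteq)$ is anti-isomorphic to the family $\mathcal{I}$ of right ideals of $M$.

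Finally I would feed in the two noetherian hypotheses through this anti-isomorphism. As a set of right congruences on the right noetherian semigroup $N$, the family $\mathcal{S}$ satisfies the ascending chain condition (Proposition \ref{acc}); and since $M$ is weakly right noetherian, $\mathcal{I}$ satisfies the ascending chain condition on right ideals, so $\mathcal{S}$ satisfies the descending chain condition. Moreover $\mathcal{I}$ contains no infinite antichain by Corollary \ref{weaklycorollary}, hence neither does $\mathcal{S}$. A poset with both chain conditions has no infinite chain, so $\mathcal{S}$ has neither an infinite chain nor an infinite antichain, and an elementary Ramsey-type argument shows that such a poset is finite. The main obstacle is precisely this finiteness of $\mathcal{S}$: the two monoids contribute opposite chain conditions (ascending from $N$, descending from $M$), and it is the combination of these with the antichain condition that forces finiteness; the monotonicity set-up and the extraction of $X$ from generators of the ideals $I_\tau$ are then routine.
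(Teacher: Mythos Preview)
Your proof is correct, and both the reduction step and the finiteness argument for $\mathcal{S}=\{\rho_m:m\in M\}$ go through as written. The route you take to finiteness of $\mathcal{S}$, however, is genuinely different from the paper's.

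The paper argues finiteness of $\mathcal{S}$ by a direct iterative construction: assuming infinitely many distinct $\rho_m$, it uses the monotonicity $\rho_m\subseteq\rho_{mu}$ together with weak right noetherianity of $M$ to produce, by repeated pigeonhole over finite generating sets of right ideals of the form $\{m\in\langle y_k\rangle:\rho_m\neq\rho_{y_k}\}$, a strictly ascending chain $\rho_{y_1}\subsetneq\rho_{y_2}\subsetneq\cdots$ of right congruences on $N$, contradicting that $N$ is right noetherian. Your argument instead packages the same ingredients order-theoretically: the anti-isomorphism $\rho_m\mapsto I_{\rho_m}$ converts ACC on right ideals of $M$ (and Corollary~\ref{weaklycorollary}) into DCC and the no-infinite-antichain condition on $\mathcal{S}$, while $N$ supplies ACC on $\mathcal{S}$; a Ramsey-type step then forces $\mathcal{S}$ finite. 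The paper's approach is more elementary---it needs only pigeonhole and avoids any appeal to Ramsey/K\H{o}nig---whereas yours is more structural and makes transparent exactly how each hypothesis contributes (ascending from $N$, descending and antichain from $M$). For the extraction of $X$ the two proofs are essentially the same; your choice of $I_\tau=\{m:\tau\subseteq\rho_m\}$ in place of the right ideal generated by $\{m:\rho_m=\tau\}$ is a minor variant that makes the verification $\rho_x=\rho_m$ slightly cleaner.
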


\begin{proof}
First note that for any $m, n\in M$, we have $\rho_m\subseteq\rho_{mn}$.
Suppose that there are infinitely many pairwise disjoint right congruences of the form $\rho_m$.
Let $I_1$ be the right ideal
$$\{m\in M : \rho_m\text{ is not the identity congruence on }N\}.$$
Since $M$ is weakly right noetherian, $I_1$ is generated by some finite set $Y_1$.
Now, there exists $y_1\in Y_1$ such that there are infinitely many pairwise disjoint right congruences of the form $\rho_m$ where $m$ is in the right ideal $\langle y_1\rangle.$
Let $I_2$ be the right ideal
$$\{m\in\langle y_1\rangle : \rho_m\neq\rho_{y_1}\}.$$
We have that $I_2$ is generated by some finite set $Y_2$, 
and there exists $y_2\in Y_2$ such that there are infinitely many pairwise disjoint right congruences of the form $\rho_m, m\in\langle y_2\rangle$.
Continuing in this way, we have an infinite chain of right congruences
$$\rho_{y_1}\subset\rho_{y_2}\subset\dots.$$
But $N$ is right noetherian, so we have a contradiction.
Hence, there exists a finite set $U$ such that for each $m\in M$, there exists $u\in U$ with $\rho_m=\rho_u$.\par
For each $u\in U$, let $I_u$ be the right ideal generated by the set $\{m\in M : \rho_m=\rho_u\}$.
Since $M$ is weakly right noetherian, we have that $I_u$ is generated by some finite set $Y_u$.
Now set $X=\bigcup_{u\in U}Y_u$.  It is clear that $X$ satisfies the condition in the statement of the lemma.
\end{proof}

We now show that finitely generated commutative monoids preserve right noetherian in direct products.

\begin{thm}
\label{dpfgcommutative}
Let $M$ be a finitely generated commutative monoid.  
Then, for any monoid $N,$ the direct product $M\times N$ is right noetherian if and only if $N$ is right noetherian.
\end{thm}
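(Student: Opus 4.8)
The plan is to prove the non-trivial implication, that $N$ being right noetherian forces $M\times N$ to be right noetherian; the converse is immediate from Lemma \ref{directfactors}. First I would note that $M$, being finitely generated and commutative, is itself right noetherian by Theorem \ref{commutative}, so both factors are right noetherian monoids and Lemma \ref{dpf} is available. The key structural move is to reduce the general finitely generated commutative $M$ to the single free monogenic monoid $\mathbb{N}_0=\{0,1,2,\dots\}$: if $M=\langle c_1,\dots,c_k\rangle$ then $M$ is a homomorphic image of the free commutative monoid $\mathbb{N}_0^k=\mathbb{N}_0\times\cdots\times\mathbb{N}_0$, so by Lemma \ref{dpquotient} it suffices to show that $\mathbb{N}_0^k$ preserves right noetherian in direct products; and since $\mathbb{N}_0^k\cong\mathbb{N}_0\times\mathbb{N}_0^{k-1}$, an induction on $k$ using Lemma \ref{dpdp} reduces everything to the single assertion that $\mathbb{N}_0$ preserves right noetherian in direct products.

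The theorem thus comes down to the core lemma: for every right noetherian monoid $N$ the product $\mathbb{N}_0\times N$ is right noetherian. Here I would fix a right congruence $\rho$ on $\mathbb{N}_0\times N$ and analyse it through two invariants. The first is the family of vertical congruences $\rho_i$ on $N$, where $s\,\rho_i\,t\iff(i,s)\,\rho\,(i,t)$; these form an ascending chain $\rho_0\subseteq\rho_1\subseteq\cdots$ which stabilises at some index $K$ since $N$ is right noetherian, and each $\rho_i$ is finitely generated. Lifting finite generating sets of $\rho_0,\dots,\rho_K$ to pairs of the form $\bigl((i,a),(i,b)\bigr)$, and using right multiplication to pass from level $K$ to any higher level, yields finitely many vertical generators accounting for every pair of $\rho$ with equal first coordinate. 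The second invariant is the projection $\bar\rho=\{(i,j):(i,s)\,\rho\,(j,t)\text{ for some }s,t\in N\}$, a congruence on $\mathbb{N}_0$; being a congruence on the free monogenic monoid it is either trivial, in which case $\rho$ has no mixed pairs and we are done, or it has a threshold $c$ and period $p$. In the latter case no $\rho$-relation crosses residue classes modulo $p$ or descends below level $c$, so $\mathbb{N}_0\times N$ splits into finitely many low levels together with finitely many \emph{columns} (arithmetic progressions of levels), with $\rho$ acting independently on each block; after relabelling, each column reduces to the case where $\bar\rho$ is universal.

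It then remains to produce finitely many \emph{horizontal} generators for the mixed pairs. The mechanism I would use is that the upward ideals $J_i=\{s\in N:(i,s)\,\rho\,(i',t)\text{ for some }i'>i,\ t\in N\}$ satisfy $J_i\subseteq J_{i+1}$, by shifting relations up via $(1,1)$, and hence stabilise since $N$ is weakly right noetherian by Lemma \ref{weakly}; choosing finite ideal generating sets with one witnessing relation per generator, for levels up to the common stabilisation bound, gives finitely many candidate horizontal generators that shift-invariance propagates to all higher levels. \textbf{The main obstacle} is verifying that these, together with the vertical generators, derive \emph{every} mixed pair: a relation $(i,s)\,\rho\,(j,t)$ may jump between far-apart levels without passing through any intermediate relation, and since a right congruence only lets one move relations \emph{up} (there is no way to descend a relation to a lower level, cancellativity of $\mathbb{N}_0$ notwithstanding), one cannot naively factor a large jump into elementary steps. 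Overcoming this is the technical heart, and I expect it to rest on using the two ascending chain conditions simultaneously—stabilisation of the $\rho_i$ and of the $J_i$—so that above the stabilisation bound every relation is a shift of one recorded at a bounded level, allowing an induction on the gap $j-i$ to reduce each mixed pair to a vertical pair handled by the first invariant. An alternative would be to work with general $M$ directly through Lemma \ref{dpf}, with commutativity of $M$ entering the mixed-pair analysis; I expect the same large-jump obstacle to reappear there.
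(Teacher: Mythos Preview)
Your reduction to $\mathbb{N}_0$ via Lemmas \ref{dpquotient} and \ref{dpdp} is exactly what the paper does. Where you diverge is in the core analysis of a right congruence $\rho$ on $\mathbb{N}_0\times N$: you organise everything around the projection $\bar\rho$ onto $\mathbb{N}_0$ and the ascending chains $\rho_i$ and $J_i$ in $N$, whereas the paper organises everything around the projection onto $N$. The obstacle you single out---that a mixed pair $(i,s)\,\rho\,(j,t)$ with a large gap $j-i$ need not factor through intermediate levels, because right multiplication only shifts relations \emph{upward}---is genuine, and your sketch of an induction on $j-i$ does not overcome it: even after the chains $\rho_i$ and $J_i$ stabilise, the witnessing relations you record at bounded levels have their own fixed step sizes, and there is no reason those step sizes should combine to produce an arbitrary gap while staying in the recorded data. (A secondary issue: your $\bar\rho$ is reflexive, symmetric and translation-compatible, but transitivity is not automatic, so the threshold--period decomposition needs more care.)

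The paper sidesteps the large-jump problem by reversing the roles of the two factors. It forms the right congruence $\rho'$ on $N$ generated by $\{(u,v):\exists\,a,b\ (a,u)\,\rho\,(b,v)\}$, takes a finite generating set $X\subseteq\rho'$, and for each $(x,y)\in\overline{X}$ records one witnessing pair $\bigl((\alpha(x,y),x),(\beta(x,y),y)\bigr)\in\rho$. Given $(a,u)\,\rho\,(b,v)$ with $u\neq v$, one first writes down an $X$-sequence in $N$ from $u$ to $v$ and then tries to lift it to $\mathbb{N}_0\times N$, tracking a running $\mathbb{N}_0$-coordinate $a_i$. If every $a_i\geq 0$ the lift succeeds directly. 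If some $a_i$ goes negative, then the first such failure (and, symmetrically from the $b$-side, the last) occurs with first coordinate bounded by $N=\max\{\alpha(x,y),\beta(x,y)\}$, so the problematic middle segment lives entirely in $\{0,\dots,N-1\}\times N$, where the argument of Theorem~\ref{dpfinitemonoid} (finite first factor) applies. The case $u=v$ is handled separately via Lemma~\ref{dpf} applied with the roles of the factors swapped. The point is that projecting onto $N$ turns the ``large jump in $\mathbb{N}_0$'' into a bounded excursion, which is exactly the manoeuvre your outline is missing.
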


\begin{proof}
Since $M$ is a homomorphic image of the free commutative monoid $\mathbb{N}_0^n$ on $n$ generators for some $n$,
it follows from Lemma \ref{dpquotient} and Lemma \ref{dpdp} that it is sufficient to prove that $\mathbb{N}_0$ preserves right noetherian in direct products.\par
So, let $U$ be a right noetherian monoid, and let $\rho$ be a right congruence on $\mathbb{N}_0\times U.$
Let $\rho^{\prime}$ be the right congruence on $U$ generated by the set 
$$R=\{(u, v)\in U\times U : (a, u)\,\rho\,(b, v)\text{ for some }a, b\in\mathbb{N}_0\}.$$
Since $U$ is right noetherian, $\rho^{\prime}$ is generated by a finite set $X\subseteq R.$
For each pair $(x, y)\in\overline{X},$ choose $\alpha(x, y), \beta(x, y)\in\mathbb{N}_0$ such that
$$(\alpha(x, y), x)\,\rho\,(\beta(x, y), y),$$
and let $$Y=\bigl\{\bigl((\alpha(x, y), x), (\beta(x, y), y)\bigr) : (x, y)\in\overline{X}\bigr\}.$$
Let $N$ be the maximum natural number in the set 
$$\{\alpha(x, y), \beta(x, y) : (x, y)\in\overline{X}\}.$$
Now let $Q$ be the set 
$$\{(a, b)\in\mathbb{N}_0\times\mathbb{N}_0: a<b<N, (a, u)\,\rho\,(b, v)\text{ for some }u, v\in U\}.$$
For each pair $(a, b)\in Q$, we define a right ideal 
$$I(a, b)=\{u\in N : (a, u)\,\rho\,(b, v)\text{ for some }v\in U\}.$$
Since $U$ is weakly right noetherian, we have that $I(a, b)$ is generated by some finite set $P(a ,b)$. 
For each $p\in P(a, b)$, choose $\alpha_{a, b}(p)\in U$ such that
$$(a, p)\,\rho\,(b, \alpha_{a, b}(p)).$$
Let $H$ be the finite set 
$$\bigl\{\bigl((a, p),(b, \alpha_{a, b}(p))\bigr) : (a, b)\in Q, p\in P(a, b)\bigr\}.$$
Now, for each $a<N$, we have that $\rho_a^U$ is generated by some finite set $X_a^U$.
Let $$Y_a^U=\bigl\{\bigl((a, x), (a, y)\bigr) : (x, y)\in X_a^U\bigr\}.$$
By Lemma \ref{dpf}, there exists a finite set $S$ such that, for each $u\in U,$ 
there exist $s\in S$ and $u^{\prime}\in U$ such that $u=su^{\prime}$ and $\rho_u^{\mathbb{N}_0}=\rho_s^{\mathbb{N}_0}.$
For each $s\in S$, we have that $\rho_s^{\mathbb{N}_0}$ is generated by some finite set $X_s^{\mathbb{N}_0}$.
Let $$Y_s^{\mathbb{N}_0}=\bigl\{\bigl((x, s), (y, s)\bigr) : (x, y)\in X_s^{\mathbb{N}_0}\bigr\}.$$
We claim that $\rho$ is generated by the finite set 
$$Z=Y\cup H\cup\biggl(\bigcup_{a<N}Y_a^U\biggr)\cup\biggl(\bigcup_{s\in S}Y_s^{\mathbb{N}_0}\biggr).$$
Let $(a, u)\,\rho\,(b, v)$ with $(a, u)\neq(b, v).$
Suppose first that $u=v.$  Then $a\,\rho_u^{\mathbb{N}_0}\,b.$
Now, there exist $s\in S$ and $u^{\prime}\in U$ such that $u=su^{\prime}$ and $\rho_u^{\mathbb{N}_0}=\rho_s^{\mathbb{N}_0}.$
Therefore, we have that $(a, b)$ is a consequence of $X_s^{\mathbb{N}_0},$ 
and hence we have a $Y_s^{\mathbb{N}_0}$ connecting $(a, u)=(a, s)(0, u^{\prime})$ and $(b, v)=(b, u)=(b, s)(0, u^{\prime}).$\par
Now suppose that $u\neq v.$
We have that $u\,\rho^{\prime}\,v,$ so there exists an $X$-sequence
$$u=x_1u_1, y_1u_1=x_2u_2, \dots, y_ku_k=v.$$
Let $\alpha_i=\alpha(x_i, y_i)$ and $\beta_i=\beta(x_i, y_i)$ for $i\in\{1, \dots, k\}.$
Now let $a_1=a-\alpha_1$ and $a_i=\beta_{i-1}+a_{i-1}-\alpha_i$ for $j\in\{2, \dots, k\}.$
Suppose first that $a_i\geq 0$ for each $i\in\{1, \dots, k\}.$
We then have a $Y$-sequence
\begin{align*}
(a, u)=(\alpha_1, x_1)(a_1, u_1), (\beta_1, y_1)(a_1, u_1)=&(\alpha_2, x_2)(a_2, u_2),\\
\dots, &(\beta_k, y_k)(a_k, u_k)=(a^{\prime}, v),
\end{align*}
where $a^{\prime}=\beta_k+a_k.$  
Now, either $a^{\prime}=b$ or, by exactly the same argument as above, $\bigl((a^{\prime}, v),(b, v)\bigr)$ is a consequence of $Y_s^{\mathbb{N}_0}.$\par
Now suppose that there exists $i\in\{1, \dots, k\}$ such that $a_i<0,$
and assume that $i$ is minimal such that $a_i<0.$ 
Let $b_k=b-\beta_k$ and $b_i=\alpha_{i+1}+b_{i+1}-\beta_i$ for $i\in\{1, \dots, k-1\}.$
Using a similar argument as above, we have that either $\bigl((b, v), (a, u)\bigr)$ is a consequence of $Z,$
or there exists $j$ maximal such that $b_j<0.$
In the latter case, let 
$$u^{\prime}=x_iu_i,\, a^{\prime}=\beta_{i-1}+a_{i-1},\, v^{\prime}=y_ju_j,\, b^{\prime}=\alpha_{j+1}+b_{j+1}.$$
Note that $a^{\prime}, b^{\prime}<N.$  By a similar argument to the one in Theorem \ref{dpfinitemonoid}, 
we have that $\bigl((a^{\prime}, u^{\prime}),(b^{\prime}, v^{\prime})\bigr)$ is a consequence of 
$$H\cup\biggl(\bigcup_{a<N}Y_a^U\biggr).$$
Hence, $\bigl((a, u),(b, v)\bigr)$ is a consequence of $Z.$
\end{proof}

\begin{remark}
Theorem \ref{dpfgcommutative} does not generalise to finitely generated commutative semigroups.
Indeed, we have already observed that $\mathbb{N}\times M$ is not right noetherian for any infinite right noetherian monoid $M.$
\end{remark}

Our next result states that right noetherian inverse semigroups preserve right noetherian in direct products.

\begin{thm}
\label{dpinverse}
Let $S$ be a right noetherian inverse semigroup.
Then, for any monoid $M,$ the direct product $S\times M$ is right noetherian if and only if $M$ is right noetherian.
\end{thm}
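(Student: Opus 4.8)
The forward implication is immediate: if $S\times M$ is right noetherian then so is its homomorphic image $M$, by Lemma \ref{directfactors}. The substance is the converse, namely that a right noetherian inverse semigroup $S$ preserves right noetherian in direct products. By Theorem \ref{inverse}, $S$ has only finitely many idempotents; since in an inverse semigroup each $\mathcal{R}$-class and each $\mathcal{L}$-class contains a unique idempotent, $S$ has only finitely many $\mathcal{R}$-classes and hence only finitely many $\mathcal{D}$-classes. I would therefore take a chain of ideals $\emptyset=I_0\subset I_1\subset\dots\subset I_r=S$ whose successive Rees quotients $I_k/I_{k-1}$ are the principal factors of $S$. As $S$ is regular, no factor is null, so each factor is completely simple or completely $0$-simple; being inverse, this means each factor is either a group or a Brandt semigroup $B(G,m)$ over a maximal subgroup $G$ of $S$, where $G$ is noetherian by Corollary \ref{subgroup} and $m$ is finite. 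Using Lemma \ref{dpidealextension} and induction along this chain, it then suffices to show that each principal factor preserves right noetherian in direct products.

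This leaves two claims. Claim (a): every noetherian group $G$ preserves right noetherian in direct products. Given a right noetherian monoid $M$ and a right congruence $\rho$ on $G\times M$, I would apply Lemma \ref{dpf} to the copy of $\rho$ on $M\times G$ to obtain a finite $X\subseteq M$ controlling the vertical congruences $\rho_m^G$ (defined by $g\,\rho_m^G\,g'\iff(g,m)\,\rho\,(g',m)$): for each $m$ there is $x\in X$ with $m\in xM$ and $\rho_m^G=\rho_x^G$. Each $\rho_x^G$ is a right congruence on the noetherian group $G$, hence finitely generated (its associated subgroup being finitely generated). Taking in addition a finite generating set for the right congruence on $M$ generated by $\{(m,m'):(g,m)\,\rho\,(g',m')\text{ for some }g,g'\in G\}$, together with finitely many witnessing pairs in $G\times M$, I would assemble a finite generating set for $\rho$: a general pair $(g,m)\,\rho\,(g',m')$ is first transported \emph{horizontally} to a pair $(wg,m')\,\rho\,(g',m')$ with equal second coordinate via the witnessing pairs, where the invertibility of $G$ is exactly what allows the first coordinates to be matched at each step, and is then resolved \emph{vertically} using the lifted generators of $\rho_{m'}^G$.

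Claim (b): every Brandt semigroup $B(G,m)$ with $G$ noetherian and $m$ finite preserves. Here I would use that $\{0\}\times M$ is an ideal of $B(G,m)\times M$ isomorphic to $M$, and that the Rees quotient $\bigl(B(G,m)\times M\bigr)/\bigl(\{0\}\times M\bigr)$ is isomorphic to the Rees matrix semigroup $\mathcal{M}^0(G\times M;\{1,\dots,m\},\{1,\dots,m\};\Delta)$ over the monoid $G\times M$. Since $G$ preserves by (a), $G\times M$ is right noetherian, so by Proposition \ref{idealextension} everything reduces to the key lemma (*): for any right noetherian monoid $K$, the Rees matrix semigroup $R=\mathcal{M}^0(K;\{1,\dots,m\},\{1,\dots,m\};\Delta)$ is right noetherian. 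The group factors are the instance $m=1$, where $B(G,1)=G^0$ and one concludes via Corollary \ref{adjoinzero} and Lemma \ref{dpzero} using that $G\times M$ is right noetherian.

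I expect (*) to be the main obstacle and the most computational step. The idea is that in $R$ the left index of a nonzero element is preserved under right multiplication, whereas the right index can be changed freely by multiplying on the right by $(j,1_K,j')$, so that the middle coordinates behave like $K$ acting on itself. Concretely, for each left index $i$ one gets a right congruence $k\,\tau_i\,k'\iff(i,k,1)\,\rho\,(i,k',1)$ on $K$, finitely generated since $K$ is right noetherian; the sets of middle coordinates $k$ with $(i,k,1)\,\rho\,0$, and with $(i,k,1)$ related to some element of left index $i'\neq i$, are right ideals of $K$, finitely generated since $K$ is weakly right noetherian by Lemma \ref{weakly}. Choosing witnesses for these finitely many right-ideal generators and normalising all pairs to reference right index $1$, I would verify by a short case analysis — according to whether two related nonzero elements share their right index (forcing a columnwise resolution through $\tau_i$) or not (forcing both to collapse to $0$) — that the resulting finite set of pairs generates $\rho$. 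Combining (a), (b) and the induction along the ideal chain then completes the proof.
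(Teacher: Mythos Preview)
Your argument is correct and follows the same overall architecture as the paper: reduce via a principal series and Lemma~\ref{dpidealextension} to the group and Brandt factors, then treat groups via Lemma~\ref{dpf} exactly as in the paper's Lemma~\ref{dpgroup}. The one genuine difference is your handling of the Brandt case. The paper (Lemma~\ref{dpBrandt}) works directly inside $B(G,I)\times M$, restricting $\rho$ to each $G_i\times M$ and to $\{0\}\times M$ and using right ideals of $M$ to control cross-column and collapse-to-zero relations. You instead mod out the ideal $\{0\}\times M$, identify the quotient with $\mathcal{M}^0(G\times M;\,m,m;\,\Delta)$, and prove the cleaner standalone lemma~(*) that $\mathcal{M}^0(K;\,m,m;\,\Delta)$ is right noetherian for any right noetherian monoid $K$. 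Your proof of~(*) is essentially the paper's argument for Lemma~\ref{dpBrandt} with $K$ playing the role of $G\times M$ and the columns $\{i\}\times K\times\{1\}$ playing the role of the $G_i\times M$; the underlying mechanics (finitely generated right ideals for cross-index and collapse-to-zero elements, plus the column congruences $\tau_i$) are the same. What your packaging buys is a reusable statement about Rees matrix semigroups over monoids, at the modest cost of an extra invocation of Proposition~\ref{idealextension}; what the paper's direct approach buys is that it avoids introducing Rees matrix semigroups over non-group monoids.
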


In order to prove Theorem \ref{dpinverse}, we first prove a couple of lemmas.

\begin{lemma}
\label{dpgroup}
Any noetherian group $G$ preserves right noetherian in direct products.
\end{lemma}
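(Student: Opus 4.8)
The plan is to dispose of the forward implication at once and to concentrate all effort on the converse. The ``only if'' direction is immediate: $M$ is a direct factor, hence a homomorphic image, of $G\times M$, so if $G\times M$ is right noetherian then $M$ is right noetherian by Lemma~\ref{directfactors}. For the ``if'' direction I would assume $M$ is right noetherian and let $\rho$ be an arbitrary right congruence on $G\times M$, with the goal of producing a finite generating set.

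The engine of the argument is that, because $G$ is a \emph{group}, right multiplication lets us slide the first coordinate freely inside a fixed $M$-coordinate: for all $g,h\in G$ and $m\in M$ one has $(g,m)(g^{-1}h,1)=(h,m)$. Two phenomena result. Within a single fibre $\{(g,m):g\in G\}$, the relation $\rho$ is exactly the right congruence $\rho_m$ on $G$ of the preamble, and since $G$ is right noetherian each $\rho_m$ is finitely generated; this part in fact uses only that $G$ is right noetherian. The genuinely group-theoretic point is for \emph{cross} relations: from a single instance $(a,m_1)\,\rho\,(b,m_2)$ one obtains, by right multiplication by $(a^{-1}g,s)$, the relation $(g,m_1)\,\rho\,(ba^{-1}g,m_2)$ for every $g\in G$ and every $s\in M$ with $m_2=m_1s$ suitably matched. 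The availability of the inverse $a^{-1}$ is what makes cross relations transportable to any prescribed first coordinate, and this is precisely where a general monoid would fail.

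I would then assemble two finite families. For the fibre part, apply Lemma~\ref{dpf} (to the canonically isomorphic product $M\times G$, both factors being right noetherian monoids) to obtain a finite set $X\subseteq M$ such that every $m\in M$ factors as $m=xm^{\prime}$ with $x\in X$ and $\rho_m=\rho_x$. For each $x\in X$ pick a finite generating set $\{(c_i,d_i)\}$ of the right congruence $\rho_x$ on $G$ and include the lifted pairs $\bigl((c_i,x),(d_i,x)\bigr)$. For the cross part, let $\tau$ be the right congruence on $M$ generated by $R=\{(m_1,m_2):(g_1,m_1)\,\rho\,(g_2,m_2)\text{ for some }g_1,g_2\in G\}$; since $M$ is right noetherian, $\tau$ is finitely generated, so by Lemma~\ref{finite subset} some finite $R^{\prime}\subseteq R$ already generates $\tau$. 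For each $(p,q)\in R^{\prime}$ fix a witness $(a_{p,q},p)\,\rho\,(b_{p,q},q)$ and include this pair. Both families are finite.

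Finally I would verify that these pairs generate $\rho$. For a within-fibre relation $(g_1,m)\,\rho\,(g_2,m)$, factor $m=xm^{\prime}$ with $\rho_m=\rho_x$; right multiplying the fibre generators by $(t_j,m^{\prime})$ lifts any $\{(c_i,d_i)\}$-sequence connecting $g_1$ and $g_2$ in $G$ to a sequence in the fibre over $m$, so such relations are consequences of the fibre generators. For a cross relation $(g_1,m_1)\,\rho\,(g_2,m_2)$ with $m_1\neq m_2$, note $(m_1,m_2)\in R\subseteq\tau$, take an $R^{\prime}$-sequence in $M$ from $m_1$ to $m_2$, and apply the matching witness pairs together with the transport identity above along this sequence; this shows that $(g_1,m_1)$ is joined by the cross generators to some $(g_1^{\prime},m_2)$, after which $(g_1^{\prime},m_2)\,\rho\,(g_2,m_2)$ is a within-fibre relation already handled. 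The step I expect to demand the most care is this last reduction: one must check that transporting along the $\tau$-sequence composes correctly in the $G$-coordinate (where the inverses do the real work) and that the transported element genuinely lands in the fibre over $m_2$, so that the remaining gap is purely within-fibre and is closed by the fibre generators.
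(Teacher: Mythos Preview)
Your proposal is correct and follows essentially the same approach as the paper: both use Lemma~\ref{dpf} to reduce the within-fibre relations to finitely many $\rho_x$, and both handle cross-fibre relations via a projected right congruence on $M$ (your $\tau$, the paper's $\rho'$) together with finitely many witness pairs, verifying by transporting an $M$-sequence into $G\times M$ using inverses in $G$. The only cosmetic difference is that the paper observes the projection relation is already a right congruence, whereas you generate $\tau$ from $R$ and invoke Lemma~\ref{finite subset}; the resulting finite generating set and verification are the same.
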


\begin{proof}
Let $M$ be a right noetherian monoid, and let $\rho$ be a right congruence on $G\times M.$
Define a right congruence $\rho^{\prime}$ on $M$ by
$$m\,\rho^{\prime}\,n\iff(g, m)\,\rho\,(h, n)\text{ for some }g, h\in G.$$
Since $M$ is right noetherian, $\rho^{\prime}$ is generated by some finite set $X$.
For each element $(x, y)\in\overline{X}$, choose $\alpha(x, y), \beta(x, y)\in G$ such that 
$$(\alpha(x, y), x)\,\rho\,(\beta(x, y), y),$$
and let 
$$Y=\bigl\{\bigl((\alpha(x, y), x), (\beta(x, y), y)\bigr) : (x, y)\in\overline{X}\bigr\}.$$
By Lemma \ref{dpf}, there exists a finite set $S$ such that, for every $m\in M,$ 
we have $m=sm^{\prime}$ for some $m^{\prime}\in M$ and $\rho_m=\rho_s.$
Since $G$ is noetherian, each $\rho_s$ is generated by some finite set $U_s$.
For $s\in S$, let $$V_s=\bigl\{\bigl((u, s), (v, s)\bigr) : (u, v)\in U_s\bigr\}.$$
We shall show that $\rho$ is generated by the finite set 
$$Z=Y\cup\biggl(\bigcup_{s\in S}V_s\biggr).$$
Let $(g, m)\,\rho\,(h, n)$ with $(g, m)\neq(h, n)$.
Assume that $m\neq n$.
Since $m\,\rho^{\prime}\,n$, there exists an $X$-sequence
$$m=x_1m_1, y_1m_1=x_2m_2, \dots, y_km_k=n,$$
where $(x_i, y_i)\in\overline{X}$ and $m_i\in M$ for $1\leq i\leq k.$
Let $\alpha_i=\alpha(x_i, y_i)$ and $\beta_i=\beta(x_i, y_i)$ for $i\in\{1, \dots, k\}$, and let
$$g^{\prime}=\beta_k\alpha_k^{-1}\dots\beta_1\alpha_1^{-1}g.$$
We then have a $Y$-sequence
\begin{equation*}
\begin{split}
(g, m)=(\alpha_1, x_1)(\alpha_1^{-1}g, m_1),\; &(\beta_1, y_1)(\alpha_1^{-1}g, m_1)=(\alpha_2, x_2)(\alpha_2^{-1}\beta_1\alpha_1^{-1}g, m_2),\\
&\hspace{1.8em}\dots, (\beta_k, y_k)(\alpha_k^{-1}\dots\beta_1\alpha_1^{-1}g, m_k)=(g^{\prime}, n).
\end{split}
\end{equation*}
If $g^{\prime}=h$, then we are done, so assume that $g^{\prime}\neq h$.
(If $m=n$, simply let $g^{\prime}=g$).\par
Now, there exist $s\in S$ and $m^{\prime}\in M$ such that $n=sm^{\prime}$ and $\rho_n=\rho_s$.
It follows that $g^{\prime}\,\rho_s\,h$, and hence there exists a $U_s$-sequence
$$g^{\prime}=u_1g_1, v_1g_1=u_2g_2, \dots, v_lg_l=h.$$
Therefore, we have a $V_s$-sequence
$$(g^{\prime}, n)=(u_1, s)(g_1, m^{\prime}), (v_1, s)(g_1, m^{\prime})=(u_2, s)(g_2, m^{\prime}), \dots, (v_l, s)(g_l, m^{\prime})=(h, n).$$
Hence, $\bigl((g, m), (h, n)\bigr)$ is a consequence of $Z.$
\end{proof}

The next lemma is concerned with {\em Brandt semigroups}, which are completely $0$-simple inverse semigroups.
They can also be characterised as follows.\par
Let $G$ be a group and let $I$ be a non-empty set.
We define the following multiplication on $(I\times G\times I)\cup\{0\}:$
$$(i, g, j)(k, h, l)=
\begin{cases} 
   (i, gh, l) & \text{if }j=k\\
   0 & \text{otherwise.}
\end{cases}$$
The set $(I\times G\times I)\cup\{0\}$ is a semigroup under this multiplication, and we denote it by $B(G, I).$
Brandt semigroups are precisely the semigroups isomorphic to some $B(G, I)$ \cite[Theorem II.3.5]{Petrich}.

\begin{lemma}
\label{dpBrandt}
Let $G$ be a noetherian group and let $I$ be a finite set.   
Then the Brandt semigroup $S=B(G, I)$ preserves right noetherian in direct products.
\end{lemma}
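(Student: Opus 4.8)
The plan is to prove that a Brandt semigroup $S=B(G,I)$ with $G$ noetherian and $I$ finite preserves right noetherian in direct products; that is, for any monoid $M$, the direct product $S\times M$ is right noetherian whenever $M$ is. The natural approach is to exploit the rich ideal structure of $S$ and reduce the problem to facts already established for groups and finite structures. First I would recall that $S=B(G,I)$ is completely $0$-simple with a single nonzero $\mathcal{D}$-class; its nonzero elements are the triples $(i,g,j)$, and $\{0\}$ together with $0$ forms an ideal. The key structural observation is that $B(G,I)$ is built from the group $G$ (which preserves right noetherian in direct products by Lemma~\ref{dpgroup}) using only finitely many coordinates from the finite set $I$, together with a zero element.

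**Reduction via ideals and the zero.** The cleanest route is to use the ideal-extension machinery. I would like to decompose $S\times M$ (or at least relate its congruences) so that Lemma~\ref{dpidealextension}, Lemma~\ref{dpzero}, and Lemma~\ref{dpdp} can be applied. Since $S$ has a zero, write $S=S'\,{}^{0}$ where $S'=I\times G\times I$ carries the partial multiplication; by Lemma~\ref{dpzero} it suffices to show that the zero-free part preserves right noetherian in direct products, so the adjoined $0$ can be handled separately. The remaining object $I\times G\times I$ is, as a $\mathcal{D}$-class, close to a direct product of $G$ with the finite ``matrix'' structure on $I\times I$. Concretely, for each fixed $i\in I$ the set $R_i=\{(i,g,j):g\in G,\,j\in I\}$ is a right ideal, and these right ideals partition the nonzero elements. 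Because $I$ is finite there are only finitely many such $R_i$, and each is isomorphic (as a right-multiplication structure) to $G\times(\text{finite set of columns})$. I would make this precise and then invoke that $G$ preserves right noetherian in direct products, combined with the finiteness of $I$ via Theorem~\ref{dpfinitemonoid}.

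**Handling an arbitrary right congruence on $S\times M$.** The technical heart of the argument will mirror the proofs of Theorem~\ref{dpfinitemonoid} and Lemma~\ref{dpgroup}. Given a right congruence $\rho$ on $S\times M$, I would extract, for each pair of indices and each $m\in M$, the induced right congruences $\rho_m^S$ on $S$ and suitable ``linking'' relations on $M$; using that $M$ is right noetherian and $G$ is noetherian, each of these is finitely generated. I would then build a finite generating set $Z$ for $\rho$ as a union: pairs that stay within a fixed index $i$ and fixed $M$-coordinate (controlled by the group $G$ acting on the $G$-coordinate, finitely generated since $G$ is noetherian); pairs that change the index $i$ or $j$ in $I$ (finitely many, since $I$ is finite); and the linking pairs that change the $M$-coordinate (finitely many by the argument of Lemma~\ref{dpf} and right noetherianity of $M$). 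The decomposition-type lemmas established in Section~4 (especially Lemma~\ref{dpidealextension} and Lemma~\ref{dpdp}) should let me assemble these pieces rather than verifying a single monolithic $Z$-sequence by hand.

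**Main obstacle.** The hardest part will be the bookkeeping for pairs whose two sides lie in different $\mathcal{H}$-classes of $S$, i.e.\ pairs $\bigl((i,g,j),m\bigr)\,\rho\,\bigl((k,h,l),n\bigr)$ with $(i,j)\neq(k,l)$, combined with a nontrivial change in the $M$-coordinate; one must show that such a pair can always be realized by a bounded-length sequence over the chosen finite generating set, splicing together an index-changing step, a group step inside a fixed $\mathcal{H}$-class governed by the noetherian group $G$, and an $M$-linking step governed by Lemma~\ref{dpf}. This is exactly the kind of three-way interleaving that appears in the $a_i<0$ case of Theorem~\ref{dpfgcommutative} and in Lemma~\ref{dpgroup}, so I expect the argument to go through by an analogous case analysis; the delicate point is ensuring the finitely many index-transition pairs, the finitely many $M$-linking pairs, and the finitely generated group congruences interact so that every $\rho$-related pair is a consequence of their union. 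Provided the ideal decomposition reduces the problem to one fixed index block at a time, the finiteness of $I$ keeps the number of transition cases finite, and Lemma~\ref{dpgroup} supplies the group-theoretic input, so the obstacle is bookkeeping rather than a genuine new difficulty.
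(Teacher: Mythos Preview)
Your ``clean reduction'' strategy has two genuine gaps. First, you cannot invoke Lemma~\ref{dpzero} to strip off the zero: when $|I|\geq 2$ the set $S'=I\times G\times I$ is \emph{not} a subsemigroup of $B(G,I)$, because $(i,g,j)(k,h,l)=0$ whenever $j\neq k$. Lemma~\ref{dpzero} passes between a semigroup $T$ and $T^0$, but a Brandt semigroup with $|I|\geq 2$ is not of the form $T^0$ for any semigroup $T$. Second, the row sets $R_i=\{(i,g,j):g\in G,\,j\in I\}\cup\{0\}$ are only \emph{right} ideals of $S$, not two-sided ideals (left multiplication changes the first index), so Lemma~\ref{dpidealextension} does not apply to them. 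There is no ideal chain in $B(G,I)$ between $\{0\}$ and $S$, since $S$ is $0$-simple; hence the ideal-extension machinery cannot decompose the problem further than the single step ``$\{0\}$ inside $S$'', which gives nothing new.

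What the paper actually does is essentially your ``technical heart'' paragraph, carried out directly rather than as a fallback. One fixes the maximal subgroups $G_i=\{i\}\times G\times\{i\}$ and applies Lemma~\ref{dpgroup} to the restrictions $\rho_i=\rho|_{G_i\times M}$ (and to $\rho_0=\rho|_{\{0\}\times M}$), obtaining finite generating sets $X_i$. For transitions between distinct row indices $i\neq j$ and for transitions to $0$, one uses weak right noetherianity of $M$ to pick finitely many witnesses, exactly as in Theorem~\ref{dpfinitemonoid}, yielding finite sets $H$ and $K$. A short case analysis (same column / different column) then shows that $\rho$ is generated by $\bigcup_i X_i\cup H\cup K$. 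So the bookkeeping you anticipate is the whole proof; the ideal-reduction shortcut you hoped for is unavailable.
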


\begin{proof}
Given Lemma \ref{dpgroup}, we may now assume that $|I|\geq 2.$
Let $M$ be a right noetherian monoid, and let $\rho$ be a right congruence on $S\times M.$
For each $i\in I,$ let $G_i$ denote the subgroup $\{i\}\times G\times\{i\}\cong G,$ and let $\rho_i$ be the restriction of $\rho$ to $G_i\times M.$
Also, let $\rho_0$ denote the restriction of $\rho$ to $\{0\}\times M.$
By Lemma \ref{dpgroup}, we have that each $\rho_i, i\in I\cup\{0\},$ is generated by a finite set $X_i.$\par
Fix $i_0\in I.$  Consider the set
$$P=\{(i, j)\in I\times I : i\neq j, \bigl((i, 1, i_0), m\bigr)\,\rho\,\bigl((j, g, i_0), n\bigr)\text{ for some }g\in G, m, n\in M\}.$$
For $(i, j)\in P$, let $J(i, j)$ denote the set
$$\{m\in M : \bigl((i, 1, i_0), m\bigr)\,\rho\,\bigl((j, g, i_0), n\bigr)\text{ for some }g\in G, n\in M\}.$$
It is clear that $J(i, j)$ is a right ideal of $M$.
Since $M$ is weakly right noetherian, $J(i, j)$ is generated (as a right ideal) by some finite set $A(i, j)$.
For each $a\in A(i, j)$, choose $\alpha_{i, j}(a)\in G$ and $\beta_{i, j}(a)\in M$ such that 
$$\bigl((i, 1, i_0), a\bigr)\,\rho\,\bigl((j, \alpha_{i, j}(a), i_0), \beta_{i, j}(a)\bigr).$$
Let $H$ be the finite set 
$$\Bigl\{\Bigl(\bigl((i, 1, i_0), a\bigr), \bigl((j, \alpha_{i, j}(a), i_0), \beta_{i, j}(a)\bigr)\Bigr) : (i, j)\in P, a\in A(i, j)\Bigr\}.$$
Now consider the set $$R=\{i\in I : \bigl((i, 1, i_0), m\bigr)\,\rho\,(0, n)\text{ for some }m, n\in M\}.$$
For $i\in R,$ let $J_i$ denote the set 
$$\{m\in M : \bigl((i, 1, i_0), m\bigr)\,\rho\,(0, n)\text{ for some }n\in M\}.$$
Now $J_i$ is a right ideal of $M,$ so it is generated by some finite set $A_i$.
For each $a\in A_i$, choose $\alpha_i\in M$ such that $\bigl((i, 1, i_0), a\bigr)\,\rho\,(0, \alpha_i)$,
and let $K$ be the finite set 
$$\Bigl\{\Bigl(\bigl((i, 1, i_0), a\bigr), (0, \alpha_i)\Bigr) : i\in R, a\in A_i\Bigr\}.$$
We claim that $\rho$ is generated by the finite set 
$$X=\Bigl(\bigcup_{i\in I\cup\{0\}}X_i\Bigr)\cup H\cup K.$$
Let $u=(s, m)$and $v=(t, n)$, where $s=(i, g, k)$ and $t=(j, h, l),$ be such that $u\,\rho\,v$ and $u\neq v.$\par
Suppose first that $k=l$.  
Multiplying on the right by $\bigl((k, g^{-1}, j), 1)\bigr)$, we have that
$$u^{\prime}=\bigl((i, 1, j), m)\bigr)\,\rho\,\bigl((j, hg^{-1}, j), n\bigr)=v^{\prime}.$$
We show that $(u^{\prime}, v^{\prime})$ is a consequence of $X.$
Then, multiplying on the right by $\bigl((j, g, k), 1)\bigr),$
we have that $(u, v)$ is a consequence of $X.$\par
If $i=j,$ then $(u^{\prime}, v^{\prime})$ is a consequence of $X_i.$\par
If $i\neq j,$ we have that $m\in J(i, j),$ so $m=am^{\prime}$ for some $a\in A(i, j)$ and $m^{\prime}\in M.$
Now, applying an element of $H$ to
$$u^{\prime}=\bigl((i, 1, i_0), a\bigr)\bigl((i_0, 1, i), m^{\prime}\bigr),$$
we obtain $$u^{\prime\prime}=\bigl((j, \alpha_{i, j}(a), i), \beta_{i, j}(a)\bigr),$$
and we have that $(u^{\prime\prime}, v^{\prime})$ is a consequence of $X_j.$\par
Now suppose that $k\neq l.$
Multiplying on the right by $\bigl((k, g^{-1}, i_0), 1)\bigr)$, we have that
$$\bigl((i, 1, i_0), m)\bigr)\,\rho\,(0, n).$$
We have that $m\in A_i,$ so $m=ap$ for some $a\in A_i$ and $p\in M.$
Now, applying an element of $K$ to 
$$u=\bigl((i, 1, i_0), a\bigr)\bigl((i_0, g, k), p\bigr),$$ we obtain $u^{\prime}=(0, a_ip).$
Similarly, there exists $n^{\prime}\in M$ such that $v^{\prime}=(0, n^{\prime})$ is obtained from $v$ by an element of $K.$
Since $u^{\prime}\,\rho_0\,v^{\prime},$ we have that $(u^{\prime}, v^{\prime})$ is a consequence of $X_0.$
Hence, $(u, v)$ is a consequence of $X.$
\end{proof}

A {\em principal series} of a semigroup $S$ is a finite chain of ideals
$$K(S)=I_1\subset I_2\subset\dots I_n=S,$$
where $K(S)$ is the unique minimal ideal of $S,$ called the {\em kernel} of $S,$
and, for each $k\in\{1, \dots, n-1\},$ the ideal $I_k$ is maximal in $I_{k+1}.$\par
We are now in a position to prove Theorem \ref{dpinverse}.

\begin{proof}[Proof of Theorem \ref{dpinverse}.]
It follows from Theorem \ref{inverse} that every right noetherian inverse semigroup has a principal series.
We shall prove Theorem \ref{dpinverse} by induction on the length of the principal series.
We may assume that $S$ has a zero $0$ by Corollary \ref{dpzero},
so let $S$ have a principal series $$\{0\}\subset I_1\subset I_2\subset\dots\subset I_n=S.$$
Now $I_1$ is completely $0$-simple by \cite[Lemma 1.3]{Kozhukhov1}, so it is a Brandt semigroup $B(G, I).$
It follows from Theorem \ref{inverse} that $G$ is noetherian and $I$ is finite, 
so $I_1$ preserves right noetherian in direct products by Lemma \ref{dpBrandt}.\par
Suppose now that $n>1,$ and assume that any right noetherian inverse semigroup with a shorter principal series preserves right noetherian in direct products.
The Rees quotient $S/I_1$ is right noetherian by Lemma \ref{quotient},
and it preserves right noetherian in direct products by the inductive hypothesis.
It now follows from Lemma \ref{dpidealextension} that $S$ preserves right noetherian in direct products.
\end{proof}

\begin{remark}
Example \ref{dpex} shows that Theorem \ref{dpinverse} does not generalise to regular semigroups.
\end{remark}

\section{Semidirect and wreath products\nopunct\\}

In this section we consider semidirect products of semigroups and wreath products of monoids.\par
Let $S$ and $T$ be two semigroups.
A {\em (left) action} of $T$ on $S$ is a map 
$$T\times S\to S, (t, s)\mapsto t\cdot s$$
such that, for all $s, s_1, s_2\in S$ and $t, t_1, t_2\in T$,
\begin{enumerate}
 \item $t_1\cdot(t_2\cdot s)=(t_1t_2)\cdot s$,
 \item $t\cdot(s_1s_2)=(t\cdot s_1)(t\cdot s_2)$.
\end{enumerate}
If $T$ is a monoid with identity $1,$ we also require that $1\cdot s=s$ for all $s\in S.$\par
The {\em semidirect product} of $S$ and $T$ (with respect to the given action) is the Cartesian product $S\times T$ with multiplication given by
$$(s_1, t_1)(s_2, t_2)=(s_1(t_1\cdot s_2), t_1t_2)$$
for all $s_1, s_2\in S$ and $t_1, t_2\in T$.\par
As observed in Section 4, a direct product being right noetherian implies that the factors are right noetherian.
We have the following generalisation of this fact.

\begin{prop}
\label{sdpfactors}
Let $S$ and $T$ be two semigroups, and let $U$ be a semidirect product of $S$ and $T$.
If $U$ is right noetherian, then both $S$ and $T$ are right noetherian.
\end{prop}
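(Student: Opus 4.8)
The statement for $T$ is the easy half. The projection $\pi\colon U\to T$, $(s,t)\mapsto t$, is a homomorphism: against the multiplication $(s_1,t_1)(s_2,t_2)=(s_1(t_1\cdot s_2),t_1t_2)$ it sends the product to $t_1t_2$, and it is clearly surjective. Thus $T$ is a homomorphic image of $U$, so $T$ is right noetherian by Lemma \ref{quotient}. No further work is needed here.

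The substance lies in $S$, where the projection $U\to S$ fails to be a homomorphism because of the twisting, so the argument above is unavailable. The plan is instead to lift an arbitrary right congruence $\rho$ on $S$ to a right congruence on $U$ and then push finite generation back down. Given $\rho$, define a relation $\widehat\rho$ on $U$ by
$$(s_1,t_1)\,\widehat\rho\,(s_2,t_2)\iff t_1=t_2\text{ and }s_1\,\rho\,s_2.$$
The first step is to check that $\widehat\rho$ is a right congruence on $U$. It is plainly an equivalence relation, and for a right multiplier $(s_3,t_3)$ one computes that $(s_1,t)$ and $(s_2,t)$ are sent to $(s_1(t\cdot s_3),tt_3)$ and $(s_2(t\cdot s_3),tt_3)$. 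These agree in the $T$-component, and since $t\cdot s_3\in S$ and $\rho$ is a right congruence on $S$, from $s_1\,\rho\,s_2$ we get $s_1(t\cdot s_3)\,\rho\,s_2(t\cdot s_3)$. This is the one place the action axioms are used.

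Now, since $U$ is right noetherian, $\widehat\rho$ is generated by a finite set $V$. As $V\subseteq\widehat\rho$, every element of $V$ has the form $\bigl((a_i,t_i),(b_i,t_i)\bigr)$ with $a_i\,\rho\,b_i$; put $W=\{(a_i,b_i):i\}$, a finite subset of $\rho$. To finish I would show $\rho=\langle W\rangle$. Take $a\,\rho\,b$ with $a\neq b$ and fix any $t\in T$; then $(a,t)\,\widehat\rho\,(b,t)$, so there is a $V$-sequence connecting them, with multipliers in $U^1$. Reading off the $S$-components of each equation in this sequence, a generator with $T$-component $u$ followed by a multiplier $(p,q)$ contributes the $S$-multiplier $u\cdot p\in S$ (and the adjoined identity of $U^1$ contributes $1_S$), so in every case the multiplier lies in $S^1$. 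This yields an $\overline{W}$-sequence connecting $a$ and $b$ in $S$, so $(a,b)$ is a consequence of $W$ and $\rho$ is finitely generated.

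I expect the main obstacle to be the projection step rather than the verification that $\widehat\rho$ is a right congruence, which is routine. The delicate point is confirming that a $V$-sequence in $U$ descends to a genuine $\overline{W}$-sequence in $S$: one must track how the semidirect multiplication turns the $T$-component $u$ of each generator together with the $S$-part $p$ of each multiplier into a single right multiplier $u\cdot p\in S^1$, and check that the chain of $S$-component equalities lines up exactly as required by the definition of a $W$-sequence. Once this bookkeeping is carried out carefully, the result follows.
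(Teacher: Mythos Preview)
Your proposal is correct and follows essentially the same approach as the paper: the projection onto $T$ handles that factor via Lemma~\ref{quotient}, and for $S$ you lift a right congruence $\rho$ to the right congruence $\widehat\rho$ on $U$ (the paper's $\rho'$), extract a finite generating set, project its $S$-components to obtain $W$, and then read off a $W$-sequence from a $V$-sequence by replacing each multiplier $(p,q)$ by $u\cdot p\in S^1$, exactly as the paper does with $t_i\cdot s_i$. Your additional explicit verification that $\widehat\rho$ is a right congruence is a detail the paper simply asserts.
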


\begin{proof}
Since $T$ is a homomorphic image of $U$, it follows from Lemma \ref{quotient} that $T$ is right noetherian.\par
Now let $\rho$ be a right congruence on $S.$
We define a right congruence $\rho^{\prime}$ on $U$ by
$$(s_1, t_1)\,\rho^{\prime}\,(s_2, t_2)\iff s_1\,\rho\,s_2\text{ and }t_1=t_2.$$
Since $U$ is right noetherian, $\rho^{\prime}$ is generated by a finite set $X$.
We claim that $\rho$ is generated by the finite set 
$$Y=\{(x, y) : \bigl((x, t), (y, t)\bigr)\in X\text{ for some }t\in T\}.$$
Let $a\,\rho\,b$ with $a\neq b$.
Choose $t\in T$.  Then $(a, t)\,\rho^{\prime}(b, t)$, so there exists an $X$-sequence
$$(a, t)=(x_1, t_1)(s_1, u_1), (y_1, t_1)(s_1, u_1)=(x_2, t_2)(s_2, u_2), \dots, (y_k, t_k)(s_k, u_k)=(b, t).$$
Hence, we have a $Y$-sequence
$$a=x_1(t_1\cdot s_1), y_1(t_1\cdot s_1)=x_2(t_2\cdot s_2), \dots, y_k(t_k\cdot s_k)=b,$$
so $(a, b)$ is a consequence of $Y.$
\end{proof}

We now define the wreath product of two monoids; see \cite{Thomson} for more details.\par
Let $M$ and $N$ be two monoids.
Let $M^N$ denote the set of all functions from $N$ to $M.$
For $m\in M,$ we let $c_m$ denote the map in $M^N$ that maps every element of $N$ to $m.$
Under componentwise multiplication, $M^N$ is a monoid with identity $c_1.$\par
For a function $f\in M^N,$ the {\em support} of $f$ is the set $$\{n\in N : nf\neq 1_M\}.$$
The set $M^{(N)}$ of all $f\in M^N$ with finite support is a submonoid of $M^N.$
Note that $M^{(N)}$ and $M^N$ are identical if $N$ is finite.\par
We define an action of $N$ on $M^N$ as follows:
Given a function $f\in M^N$ and an element $n\in N,$ 
we define a function $^n\!f\in M^N$ by $^n\!f=(xn)f$ for all $x\in N.$
This action yields a semidirect product of $M^N$ and $N,$ called the {\em unrestricted wreath product} of $M$ and $N,$ which we denote by $M\,wr\,N.$\par 
The {\em (restricted) wreath product} of $M$ and $N,$ denoted by $M\wr N,$ is the subsemigroup of $M\,wr\,N$ generated by $M^{(N)}\times N.$
It is well known that if $N$ is a group, then $M\wr N=M^{(N)}\times N.$
Also, if $N$ is finite, then $M\wr N=M\,wr\,N.$\par
The next result provides some necessary conditions for the wreath product of two monoids to be right noetherian.

\begin{prop}
\label{wrfactors}
Let $M$ and $N$ be two monoids.  
If the wreath product $M\wr N$ is right noetherian, then one of the following holds:
\begin{enumerate}
 \item $M$ is trivial and $N$ is right noetherian;
 \item $M$ is right noetherian and $N$ is finite.
\end{enumerate}
\end{prop}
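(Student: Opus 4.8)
The plan is to dispose of the easy direction first and then split according to whether $M$ is trivial. The projection $(f, n)\mapsto n$ is a surjective homomorphism $M\wr N\to N$, so by Lemma \ref{quotient} the monoid $N$ is right noetherian in all cases. If $M$ is trivial then $M^{(N)}=\{c_1\}$ and $M\wr N\cong N$, which gives conclusion (1). So I assume from now on that $M$ is nontrivial, and must show that $N$ is finite and that $M$ is right noetherian; for $n\in N$ and $a\in M$ I write $f_{n,a}\in M^{(N)}$ for the function taking value $a$ at $n$ and $1_M$ elsewhere.

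To prove $N$ finite I argue by contradiction, supposing $N$ infinite, and treat two cases. First suppose $M$ is not a group. A monoid in which every element is right invertible is a group, so some $a\neq 1_M$ has no right inverse, whence $aM\neq M$. I would then compute the principal right ideal $R_n$ generated by $(f_{n,a},1)$ and check that $R_n=\{(h,m): h(n)\in aM\}$. For $n\neq n'$, choosing $h$ with $h(n)=a\in aM$ and $h(n')\in M\setminus aM$ exhibits an element of $R_n$ not in $R_{n'}$, so $\{R_n : n\in N\}$ is an infinite set of pairwise incomparable right ideals, contradicting Corollary \ref{weaklycorollary}. Second, suppose $M$ is a nontrivial group. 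Then $D=M^{(N)}\times\{1\}$ is a subgroup of $M\wr N$ isomorphic to $M^{(N)}$, and since $N$ is infinite and $M$ is nontrivial a support argument shows $M^{(N)}$ is not finitely generated, hence (recalling that noetherian groups are finitely generated) not noetherian; this contradicts Corollary \ref{subgroup}. Either way $N$ must be finite.

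Finally, with $N$ finite we have $M^{(N)}=M^N$ and $M\wr N=M\,wr\,N$ is genuinely the semidirect product of $M^N$ and $N$. By Proposition \ref{sdpfactors} the factor $M^N$ is right noetherian, and since evaluation at $1_N$ is a surjective homomorphism $M^N\to M$, Lemma \ref{quotient} yields that $M$ is right noetherian, giving conclusion (2).

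The main obstacle is the proof that $N$ is finite. The natural right-ideal antichain $\{R_n\}$ collapses exactly when $M$ is a group, since then every element of $M$ is invertible and $R_n=M\wr N$; this forces a genuinely separate treatment of the group case via the non-finitely-generated subgroup $M^{(N)}\times\{1\}$. Pinning down the precise description of $R_n$ and verifying incomparability, and verifying that $D\cong M^{(N)}$ fails to be finitely generated, is where the real work lies, while the reductions establishing the right-noetherianity of $N$ and $M$ are routine applications of the earlier results.
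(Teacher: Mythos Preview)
Your argument is correct, but it takes a genuinely different route from the paper in the central step of showing that $N$ is finite when $M$ is nontrivial. The paper gives a single unified argument: it picks any $m\neq 1_M$ and an infinite sequence $n_1,n_2,\dots$ in $N$, defines finitely supported maps $f_i$ with $f_i(n_j)=m$ for $j\le i$, and shows directly that the right congruence generated by all pairs $\bigl((f_i,e),(c_1,e)\bigr)$ cannot be finitely generated, by tracking the value at a coordinate $n_i$ beyond the reach of any proposed finite generating set. This avoids any case distinction on $M$.

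Your approach instead splits on whether $M$ is a group, invoking Corollary~\ref{weaklycorollary} (pairwise incomparable right ideals) in the non-group case and Corollary~\ref{subgroup} (subgroups are noetherian) in the group case. Both halves are sound, and the argument is arguably more conceptual in that it reduces to already-proved structural facts rather than building a bespoke congruence. One small caution: the claimed \emph{equality} $R_n=\{(h,m):h(n)\in aM\}$ is not obviously true, because elements of $M\wr N$ are products in $M^{(N)}\times N$ and need not have the form you can freely prescribe; however, you only need the easy containment $R_n\subseteq\{(h,m):h(n)\in aM\}$ together with the observation that the generator $(f_{n,a},1)$ itself lies in $R_n$ and has value $1_M\notin aM$ at $n'$, which already gives incomparability. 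The remaining reductions (that $N$ is a quotient of $M\wr N$, and that $M$ is a quotient of $M^N$ once $N$ is finite) match the paper exactly.
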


\begin{proof}
If $M$ is trivial, then $N\cong M\wr N$ is right noetherian.\par
Suppose that $M$ is non-trivial.
To prove that $N$ is finite, we assume for a contradiction that it is infinite.
Let $1$ and $e$ be the identities of $M$ and $N$ respectively.  
Choose $m\in M\!\setminus\!\{1\}$ and a countably infinite set $\{n_1, n_2, \dots\}\subseteq N.$
For each $i\in\mathbb{N},$ we define a map
$$f_i : N\to M, n\mapsto
\begin{cases} 
   m & \text{if } n\in\{n_1, \dots, n_i\}\\
   1 & \text{otherwise.}
  \end{cases}$$
Let $\rho$ be the right congruence on $M\wr N$ generated by the set 
$$\bigl\{\bigl((f_i, e), (c_1, e)\bigr) : i\in\mathbb{N}\bigr\}.$$
Since $M\wr N$ is right noetherian, $\rho$ can be generated by some finite set 
$$X=\bigl\{\bigl((f_i, e), (c_1, e)\bigr) : 1\leq i\leq l\bigr\}.$$
Choose $i>l,$ and let $f=f_i$ and $n=n_i.$
We have an $X$-sequence
$$(f, e)=(x_1, e)(g_1, e), (y_1, e)(g_1, e)=(x_2, e)(g_2, e), \dots, (y_k, e)(g_k, e)=(c_1, e).$$
Therefore, we have a sequence
$$f=x_1g_1, y_1g_1=x_2g_2, \dots, y_kg_k=c_1.$$
Since $nf=m$ and $nx_1=1,$ we have $ng_1=m.$
Therefore, we have $n(y_1g_1)=m,$ which in turn implies that $ng_2=m.$
Continuing in this way, we have that $ng_i=m$ for all $i\in\{1, \dots, k\}.$
But then we have $$nc_1=(ny_k)(ng_k)=m\neq 1,$$ which is a contradiction.\par 
Since $N$ is finite, $M\wr N$ coincides with $M\,wr\,N.$
It follows from Proposition \ref{sdpfactors} that the monoid $M^N$ is right noetherian,
and hence $M,$ being a homomorphic image of $M^N,$ is also right noetherian.
\end{proof}

The following example shows that the converse of Proposition \ref{wrfactors} does not hold,
and the property of being right noetherian is not preserved under semidirect products of monoids.

\begin{ex}
Let $M$ be an infinite right noetherian monoid such that $M\!\setminus\!\{1_M\}$ is a subsemigroup,
and let $N$ be any finite monoid with a zero $0.$
Note that if $M$ preserves right noetherian in direct products (for example, $M=\mathbb{N}_0$), then $M^N$ is right noetherian.
Consider the ideal $$I=\{(f, 0) : f\in M^N\}$$ of $M\wr N.$
Let $X\times\{0\}$ be any set that generates $I$ as a right ideal,
and let $S$ denote the infinite set $$\{f\in M^N : 1_Nf=1_M\}.$$
We claim that $S$ is contained in $X,$ so $I$ is not finitely generated as a right ideal and $M\wr N$ is not weakly right noetherian.
Indeed, if $f\in S,$ then $(f, 0)=(g, 0)(h, n)$ for some $g\in X,$ $h\in M^N$ and $n\in N,$ so $f=g\,^0\!h=gc_{0h}.$ 
Therefore, we have 
$$1_M=1_Nf=(1_Ng)(1_Nc_{0h})=(1_Ng)(0h),$$
which implies that $0h=1_M$ (since $M\!\setminus\!\{1_M\}$ is an ideal of $M$) and hence $f=g.$
\end{ex}

For groups, the property of being noetherian \textit{is} preserved under semidirect products:

\begin{thm}
\label{sdpgroups}
Let $G$ and $H$ be two groups, and let $U$ be a semidirect product of $G$ and $H$.
Then $U$ is noetherian if and only if both $G$ and $H$ are noetherian.
\end{thm}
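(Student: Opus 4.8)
The forward implication is immediate. Since $U$ is a semidirect product of $G$ and $H$, Proposition \ref{sdpfactors} tells us that $G$ and $H$ are right noetherian, and for groups this is the same as being noetherian by Proposition \ref{group}.

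For the converse, the plan is to establish (and then apply to $U$) the classical fact that the class of noetherian groups is closed under extensions. First I would note that $U$ is itself a group, being a semidirect product of two groups, that the copy $G_0=\{(g, 1_H) : g\in G\}$ of $G$ is a normal subgroup of $U$, and that $U/G_0\cong H$. By Proposition \ref{group} it suffices to prove that every subgroup $K$ of $U$ is finitely generated, so I would fix such a $K$.

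The key step is to split $K$ using the normal subgroup $G_0$. The intersection $K\cap G_0$ is a subgroup of $G_0\cong G$, hence finitely generated since $G$ is noetherian; say $K\cap G_0=\langle a_1, \dots, a_r\rangle$. On the other hand, the image $KG_0/G_0\cong K/(K\cap G_0)$ is a subgroup of $U/G_0\cong H$, hence finitely generated since $H$ is noetherian; so I would choose $b_1, \dots, b_s\in K$ whose cosets generate $K/(K\cap G_0)$. I then claim that $K=\langle a_1, \dots, a_r, b_1, \dots, b_s\rangle$: given $k\in K$, its image in $K/(K\cap G_0)$ is a product of the cosets of the $b_j$ and their inverses, so after multiplying $k$ by a suitable word in the $b_j$ one lands in $K\cap G_0$, which is generated by the $a_i$. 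Hence $K$ is finitely generated, and $U$ is noetherian by Proposition \ref{group}.

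The argument is routine once the right characterisation is in place; the only point requiring care is the final combination step, where one must verify that coset representatives of a generating set for $K/(K\cap G_0)$ together with a generating set for $K\cap G_0$ actually generate $K$. This is where the normality of $G_0$ (so that $K/(K\cap G_0)$ is genuinely a quotient group) and the translation via Proposition \ref{group} from being right noetherian to having all subgroups finitely generated do the essential work.
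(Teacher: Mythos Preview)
Your proof is correct and follows essentially the same approach as the paper: for the converse, both arguments show that an arbitrary subgroup $K\leq U$ is finitely generated by combining a finite generating set for $K\cap G_0$ (your $a_i$, the paper's $\{(x,1):x\in X\}$) with lifts of a finite generating set for the image of $K$ in $H$ (your $b_j$, the paper's $\{(g_y,y):y\in Y\}$). The only cosmetic difference is that you phrase this as the general closure of noetherian groups under extensions via the second isomorphism theorem, whereas the paper carries out the same computation concretely in semidirect-product coordinates.
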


\begin{proof}
The direct implication follows from Proposition \ref{sdpfactors}.
For the converse, we need to show that every subgroup of $U$ is finitely generated, so let $S$ be a subgroup of $U.$
Define a set $$G_S=\{g\in G : (g, 1)\in S\}.$$
Since $G_S$ is a subgroup of $G$ and $G$ is noetherian, we have that $G_S$ is generated by some finite set $X.$\par 
Let $H_S$ be the projection of $S$ onto $H$.
Since $H_S$ is a subgroup of $H$ and $H$ is noetherian, we have that $H_S$ is generated by some finite set $Y.$
For each $y\in Y,$ choose $g_y\in G$ such that $(g_y, y)\in S.$
We claim that $S$ is generated by the finite set 
$$Z=\{(x, 1) : x\in X\}\cup\{(g_y, y) : y\in Y\}\subseteq S.$$
Let $(g, h)\in S$.  Now $h=y_1\dots y_k$ for some $y_i\in Y\cup Y^{-1}.$
For $i\in\{1, \dots, k\},$ let $u_i=g_{y_i}$ if $y_i\in Y$ or $u_i=y_i\cdot g_{{y_i}^{-1}}^{-1}$ if $y_i\in Y^{-1}.$
We have that
$$(g, h)(u_k, y_k)^{-1}\dots(u_1, y_1)^{-1}=(g^{\prime}, 1)\in S,$$
where $$g^{\prime}=g(y_1\dots y_{k-1}\cdot u_k^{-1})\dots(y_1\cdot u_2^{-1})u_1^{-1}.$$
Since $g^{\prime}\in G_S,$ we have $g^{\prime}=x_1\dots x_n$ for some $x_i\in X\cup X^{-1}.$
It now follows that
$$(g, h)=(x_1, 1)\dots(x_n, 1)(u_1, y_1)\dots(u_k, y_k)\in\langle Z\rangle,$$
as required.
\end{proof}

We now provide necessary and sufficient conditions for the wreath product of two groups to be noetherian.

\begin{thm}
\label{wrgroups}
Let $G$ and $H$ be two groups.  Then the wreath product $G\wr H$ is noetherian if and only if one of the following holds:
\begin{enumerate}
 \item $G$ is trivial and $H$ is noetherian;
 \item $G$ is noetherian and $H$ is finite.
\end{enumerate}
\end{thm}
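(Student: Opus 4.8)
The plan is to prove both directions, with the converse being the substantial part. The forward implication is immediate: by Proposition \ref{wrfactors}, if $G\wr H$ is noetherian (hence right noetherian) then either $G$ is trivial and $H$ is right noetherian, or $G$ is right noetherian and $H$ is finite. Since subgroups of right noetherian semigroups are noetherian (Corollary \ref{subgroup}), ``right noetherian'' for the groups $G$ and $H$ coincides with ``noetherian'', so we recover exactly conditions (1) and (2).

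For the converse I would treat the two cases separately. Case (1) is trivial: if $G$ is trivial then $G\wr H\cong H$, which is noetherian by hypothesis. Case (2) is where the work lies: I must show that if $G$ is noetherian and $H$ is finite, then $G\wr H$ is noetherian. Since $H$ is finite, the wreath product is restricted $=$ unrestricted, so $G\wr H=G^{H}\rtimes H$, a semidirect product of the two groups $G^{H}$ and $H$. The strategy is to invoke Theorem \ref{sdpgroups}: a semidirect product of two groups is noetherian if and only if both factors are. Here $H$ is finite and hence noetherian, so it remains to show that the base group $G^{H}$ is noetherian. Because $H$ is finite, $G^{H}$ is just the finite direct power $G^{|H|}=G\times\dots\times G$, and a finite direct product of noetherian groups is noetherian.

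The key remaining step, therefore, is establishing that a finite direct product of noetherian groups is noetherian. The cleanest route is to observe that it suffices to handle a product of two factors and then induct on the number of factors. For two noetherian groups $G_{1}, G_{2}$, the group $G_{1}\times G_{2}$ is again a (trivial) semidirect product, so this too falls directly under Theorem \ref{sdpgroups}; alternatively one can cite the standard fact that a direct product of finitely many noetherian groups is noetherian. Either way, chaining these facts gives that $G^{|H|}$ is noetherian, and then Theorem \ref{sdpgroups} applied to $G^{H}\rtimes H$ completes the proof.

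The main obstacle, such as it is, is simply recognising that the finiteness of $H$ collapses the wreath product into the semidirect product $G^{H}\rtimes H$ of honest groups, so that the group-theoretic machinery of Theorem \ref{sdpgroups} applies verbatim; once that reduction is made, every remaining step is a routine appeal to closure of noetherianity under finite direct products and semidirect products of groups. I would be careful to note explicitly that for finite $H$ one has $G\wr H=G\,wr\,H=G^{H}\rtimes H$, since that identification is the linchpin of the argument.
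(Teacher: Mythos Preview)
Your proposal is correct and follows essentially the same approach as the paper: the forward implication is dispatched via Proposition~\ref{wrfactors}, case~(1) is the trivial identification $G\wr H\cong H$, and case~(2) is handled by recognising $G\wr H$ as the semidirect product $G^{H}\rtimes H$ and invoking Theorem~\ref{sdpgroups}, with $G^{H}\cong G^{|H|}$ noetherian as a finite direct product of noetherian groups. Your write-up is slightly more explicit than the paper's (in particular, spelling out that the noetherianity of $G^{|H|}$ itself follows from Theorem~\ref{sdpgroups} by induction), but the underlying argument is identical.
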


\begin{proof}
The direct implication follows from Proposition \ref{wrfactors}, so we just need to prove the converse.\par
If $G$ is trivial and $H$ is noetherian, then $G\wr H\cong H$ is noetherian.
Suppose that $G$ is noetherian and $H$ is finite.
The wreath product of $G$ and $H$ is a semidirect product of $G^H$ and $H,$ 
and $G^H$ is isomorphic to the direct product of $|H|$ copies of $G,$
so it follows from Theorem \ref{sdpgroups} that $G\wr H$ is noetherian.
\end{proof}

\section{Unions of semigroups\nopunct}

In this section we study $0$-direct unions and (strong) semilattices of semigroups.
We begin with $0$-direct unions.

\begin{defn}
Let $U$ be a semigroup $S\cup T\cup 0$, where $S$ and $T$ are disjoint subsemigroups of $U$, $0$ is a zero element disjoint from $S\cup T,$
and $st=ts=0$ for all $s\in S$ and $t\in T.$
Then $U$ is the {\em $0$-direct union} of $S$ and $T.$
\end{defn}

The property of being right noetherian is preserved under $0$-direct unions:

\begin{prop}
Let $U$ be the $0$-direct union of two semigroups $S$ and $T$.
Then $U$ is right noetherian if and only if both $S$ and $T$ are right noetherian.
\end{prop}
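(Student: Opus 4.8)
The plan is to reduce everything to the ideal-based machinery already developed in Sections~2 and~3, since the $0$-direct union has a transparent ideal structure. First I would record the two basic ideals: setting $S^0=S\cup\{0\}$ and $T^0=T\cup\{0\}$, each is an ideal of $U$. Indeed, for $s\in S$ and $u\in U$ the product $su$ lies in $S$ when $u\in S$ and equals $0$ otherwise, so $SU\cup US\subseteq S^0$; the argument for $T^0$ is symmetric. In particular $S^0$ and $T^0$ are left ideals of $U$, and one has $U\setminus S=T^0$ and $U\setminus T=S^0$. These identifications are the only structural facts the whole argument needs.

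For the forward direction, suppose $U$ is right noetherian. Then $S$ is a subsemigroup of $U$ whose complement $U\setminus S=T^0$ is a left ideal, so Proposition~\ref{leftideal} applies directly and gives that $S$ is right noetherian. Running the identical argument with the subsemigroup $T$, whose complement $U\setminus T=S^0$ is likewise a left ideal, shows that $T$ is right noetherian. This half is essentially immediate once the ideal observation is in place.

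For the converse, suppose both $S$ and $T$ are right noetherian. By Corollary~\ref{adjoinzero} the semigroup $T^0$ is right noetherian, and it is an ideal of $U$. I would then identify the Rees quotient $U/T^0$: collapsing the ideal $T^0$ to a single zero, and using that $S$ is a subsemigroup disjoint from $T^0$ (so products of elements of $S$ stay in $S$ and are unaffected by the collapse), gives $U/T^0\cong S^0$, which is right noetherian by Corollary~\ref{adjoinzero} again. Applying Proposition~\ref{idealextension} to the ideal $T^0$ of $U$ then yields that $U$ is right noetherian. (Equivalently, one may observe that $T^0$ is a right noetherian ideal whose complement is the subsemigroup $S$, and invoke Corollary~\ref{rnideal} in a single step.)

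I do not expect a genuine obstacle here: the theorem is a straightforward consequence of the inheritance results for subsemigroups with left-ideal complement and for ideal extensions. The only point requiring a moment's care is the verification that the Rees quotient $U/T^0$ is isomorphic to $S^0$ rather than to $S$ itself, which matters precisely because Proposition~\ref{idealextension} is stated for the quotient $S/I$; once this identification is made, both directions close immediately.
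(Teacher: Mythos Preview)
Your proof is correct and follows essentially the same route as the paper: the forward direction is exactly the paper's application of Proposition~\ref{leftideal} to the complements $U\setminus S=T^0$ and $U\setminus T=S^0$, and for the converse the paper invokes Corollary~\ref{rnideal} directly (which you also note as an option), while your unpacked version via $U/T^0\cong S^0$ and Proposition~\ref{idealextension} is precisely how Corollary~\ref{rnideal} itself is proved.
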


\begin{proof}
($\Rightarrow$) Since $U$ is right noetherian and the sets $U\!\setminus\!S$ and $U\!\setminus\!T$ are ideals of $U,$
we have that $S$ and $T$ are right noetherian by Proposition \ref{leftideal}.\par
($\Leftarrow$) Since $T$ is right noetherian, the ideal $U\!\setminus\!S=T\cup\{0\}$ is right noetherian by Corollary \ref{adjoinzero}.
Therefore, since $S$ is right noetherian, we have that $U$ is right noetherian by Corollary \ref{rnideal}.
\end{proof}

We now turn our attention to semilattices of semigroups.\par 
Let $Y$ be a semilattice and let $(S_{\alpha})_{\alpha\in Y}$ be a family of disjoint semigroups (resp. monoids), indexed by $Y,$
such that $S=\bigcup_{\alpha\in Y}S_{\alpha}$ is a semigroup.
If $S_{\alpha}S_{\beta}\subseteq S_{\alpha\beta}$ for all $\alpha, \beta\in Y,$
then $S$ is called a {\em semilattice of semigroups (resp. monoids)}, and we denote it by $S=\mathcal{S}(Y, S_{\alpha}).$\par 
Now suppose that for each $\alpha, \beta\in Y$ with $\alpha\geq\beta,$ 
there exists a homomorphism $\phi_{\alpha, \beta} : S_{\alpha}\to S_{\beta}$ satisfying the following:
\begin{itemize}
 \item for each $\alpha\in Y,$ the homomorphism $\phi_{\alpha, \alpha}$ is the identity map on $S_{\alpha}$;
 \item for each $\alpha, \beta, \gamma\in Y$ with $\alpha\geq\beta\geq\gamma$, we have $\phi_{\alpha, \beta}\,\phi_{\beta, \gamma}=\phi_{\alpha, \gamma}.$
\end{itemize}
For $a\in S_{\alpha}$ and $b\in S_{\beta},$ we define
$$ab=(a\phi_{\alpha, \alpha\beta})(b\phi_{\beta, \alpha\beta}).$$
With this multiplication, $S$ is a semilattice of semigroups (resp. monoids).
In this case we call $S$ a {\em strong semilattice of semigroups} (resp. {\em monoids)} and denote it by $S=\mathcal{S}(Y, S_{\alpha}, \phi_{\alpha, \beta}).$\par 
In the remainder of this section, we investigate under what conditions a (strong) semilattice of semigroups or monoids is right noetherian.\par
Since the semilattice $Y$ is a homomorphic image of $S=\mathcal{S}(Y, S_{\alpha}),$ 
by Lemma \ref{quotient} and Corollary \ref{semilattice} we have:

\begin{lemma}
\label{finitesemilattice}
Let $S=\mathcal{S}(Y, S_{\alpha})$ be a semilattice of semigroups.
If $S$ is right noetherian, then $Y$ is finite.
\end{lemma}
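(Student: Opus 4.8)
The plan is to exploit the observation, already signposted in the sentence preceding the statement, that $Y$ occurs as a homomorphic image of $S$, and then to feed this into Corollary \ref{semilattice}. First I would verify that the projection $\pi\colon S\to Y$ given by $s\pi=\alpha$ whenever $s\in S_{\alpha}$ is a well-defined surjective homomorphism. Well-definedness and surjectivity are immediate, since $S=\bigcup_{\alpha\in Y}S_{\alpha}$ is a disjoint union indexed by $Y$. The homomorphism property follows directly from the defining inclusion $S_{\alpha}S_{\beta}\subseteq S_{\alpha\beta}$: for $s\in S_{\alpha}$ and $t\in S_{\beta}$ we have $st\in S_{\alpha\beta}$, whence $(st)\pi=\alpha\beta=(s\pi)(t\pi)$.

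Second, I would apply Lemma \ref{quotient}. Since $S$ is right noetherian and $Y$ is a homomorphic image of $S$, the semilattice $Y$ is itself right noetherian. Finally, I would invoke Corollary \ref{semilattice}, which states that a right noetherian semigroup contains no infinite semilattice. As $Y$ is a right noetherian semigroup that is itself a semilattice, it cannot contain an infinite semilattice; in particular $Y$ cannot be infinite, since otherwise $Y$ would be an infinite subsemilattice of itself. Hence $Y$ is finite.

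I do not anticipate any genuine obstacle: the result is an immediate consequence of the two cited facts, and the only point requiring a moment's care is the routine check that $\pi$ respects multiplication, which rests entirely on the condition $S_{\alpha}S_{\beta}\subseteq S_{\alpha\beta}$ built into the definition of a semilattice of semigroups.
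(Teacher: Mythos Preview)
Your proposal is correct and follows precisely the approach of the paper: the paper's proof is the single sentence preceding the lemma, which notes that $Y$ is a homomorphic image of $S$ and invokes Lemma~\ref{quotient} together with Corollary~\ref{semilattice}. Your only addition is the explicit verification that the projection $\pi$ is a homomorphism, which is routine and implicit in the paper.
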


For a semilattice of semigroups $\mathcal{S}(Y, S_{\alpha})$ to be right noetherian,
it is not required that all the $S_{\alpha}$ be right noetherian.
In order to show this, we consider a construction involving semigroup acts.

\begin{con}
Let $S$ be a semigroup, and let $A$ be an $S$-act disjoint from $S$ with action 
$$A\times S\to A, (a, s)\mapsto a{\cdot}s.$$  
We define the following multiplication on the set $S\cup A$:
$$x\circ y=
  \begin{cases} 
   xy & \text{if }x, y\in S\\
   x{\cdot}y & \text{if }x\in A, y\in S\\
   y & \text{if }x\in S\cup A, y\in A.
  \end{cases}$$
With this operation, the set $S\cup A$ is a semilattice of two semigroups $S_{\alpha}=S$ and $S_{\beta}=A$ where $\alpha>\beta$, 
and we denote it by $\mathcal{U}(S, A).$
Note that $A$ is an ideal of $\mathcal{U}(S, A)$.
\end{con}

\begin{ex}
Let $A$ be a free cyclic $\mathbb{Z}$-act disjoint from $\mathbb{Z}.$  
Then $\mathcal{U}(\mathbb{Z}, A)$ is the monoid defined by the presentation 
$$\langle a, b, c\,|\,ab=ba=1, ac=c^2=c\rangle.$$
\end{ex}

Notice that $A$ is a right zero subsemigroup of $\mathcal{U}(S, A).$
It is easy to see that infinite right zero semigroups are not right noetherian, 
so the following result yields the desired counterexample.

\begin{prop}
Let $S$ be a semigroup, let $A$ be an $S$-act disjoint from $S,$ and let $U=\mathcal{U}(S, A).$
Then $U$ is right noetherian if and only if $S$ is right noetherian and $A$ is a finitely generated $S$-act.
\end{prop}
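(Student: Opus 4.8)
The plan is to prove the two implications separately, the guiding principle being that $A=U\setminus S$ is an ideal of $U$ whose induced semigroup structure is merely that of a right zero band, while all of its nontrivial interaction with $S$ is carried by the $S$-action. For the forward implication, suppose $U$ is right noetherian. Since $A$ is an ideal of $U$, in particular a left ideal, Proposition \ref{leftideal} immediately yields that $S$ is right noetherian. To see that $A$ is finitely generated as an $S$-act, I argue by contradiction. If it is not, then one builds greedily a strictly ascending chain of subacts $B_1\subset B_2\subset\cdots$, adjoining at each stage a generator lying outside the subact already constructed. For each $k$ let $\theta_k$ be the equivalence relation on $U$ that collapses $B_k$ to a single class and fixes every other point. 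A direct check—using $x\circ z=z$ for $z\in A$ and $x\cdot z\in B_k$ for $z\in S$, since $B_k$ is a subact—shows that each $\theta_k$ is a right congruence on $U$, and $\theta_1\subset\theta_2\subset\cdots$ is strictly ascending, contradicting Proposition \ref{acc}.

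For the converse, assume $S$ is right noetherian and $A$ is a finitely generated $S$-act, and let $\rho$ be an arbitrary right congruence on $U$; I will exhibit a finite generating set. I split $\rho$ into three pieces: the restriction $\rho_S=\rho\cap(S\times S)$, which is a right congruence on $S$ and so is generated by a finite set $X_S$; the restriction $\rho_A=\rho\cap(A\times A)$; and the \emph{cross pairs} relating an element of $S$ to an element of $A$. The cross pairs are controlled by the set $J=\{s\in S: (s,a)\in\rho\text{ for some }a\in A\}$, which is readily seen to be a right ideal of $S$. As $S$ is weakly right noetherian by Lemma \ref{weakly}, $J$ is finitely generated as a right ideal, say by $j_1,\dots,j_m$; choosing $b_k\in A$ with $(j_k,b_k)\in\rho$, one checks that any cross pair $(s,a)$, writing $s=j_ku$ with $u\in S^1$, is a consequence of the finitely many pairs $(j_k,b_k)$ together with $\rho_A$ (the pair $(b_k\cdot u,a)$ lying in $\rho_A$).

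The crux is therefore to show that $\rho_A$ is finitely generated, and this is where the construction pays off. The point is that $\rho_A$ is \emph{not} an arbitrary equivalence relation on the right zero band $A$—which as a standalone semigroup fails to be right noetherian once $A$ is infinite—but is in fact an $S$-act congruence, since $a\,\rho_A\,b$ forces $a\cdot s\,\rho_A\,b\cdot s$. Passing to $S^1$ (legitimate by Corollary \ref{adjoinidentity}) so that the monoid hypothesis of Proposition \ref{acts} applies, both the finitely generated act $A$ and its quotient $A/\rho_A$ are finitely presented. Fixing a surjective homomorphism $F\to A$ from a finitely generated free act $F$ with kernel congruence $\kappa$, finite presentability makes $\kappa$ and the kernel $\kappa^{\prime}$ of the composite $F\to A/\rho_A$ both finitely generated by \cite[Proposition 3.9]{Miller2}, whence $\rho_A$, being the image of $\kappa^{\prime}$ on $A=F/\kappa$, is generated by the finitely many images of the generators of $\kappa^{\prime}$. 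With finite generating sets for $\rho_S$, for the cross pairs, and for $\rho_A$ all in hand, a routine case analysis—according as a related pair lies in $S\times S$, in $A\times A$, or is a cross pair—assembles them, via $U$-sequences with multipliers in $S^1$, into a finite generating set for $\rho$.

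I expect the main obstacle to be precisely this handling of $\rho_A$. Because the ideal $A$ is not right noetherian as a semigroup, Proposition \ref{idealextension} (with $A$ as the ideal) is unavailable, and one must instead exploit the additional rigidity imposed by the $S$-action, translating finite presentability of finitely generated acts over right noetherian monoids into finite generation of the congruence $\rho_A$. The remaining bookkeeping—verifying that $\theta_k$ is a right congruence, that $J$ is a right ideal, and that the three families of generators combine correctly—is straightforward.
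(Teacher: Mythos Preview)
Your proof is correct, but both directions take a different route from the paper's.

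For the forward direction, the paper handles finite generation of $A$ by considering the single right congruence on $U$ generated by $A\times A$; a finite generating set for that congruence immediately yields a finite generating set for the act $A$. Your ascending-chain-of-subacts argument via the Rees congruences $\theta_k$ is equally valid and arguably more transparent, though it requires the small check (which you note) that each $\theta_k$ really is a right congruence on $U$.

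The more substantial divergence is in the converse. The paper proceeds entirely by hand: fixing a finite act-generating set $X$ for $A$, it builds, for each $x\in X$, a right congruence $\rho_x$ on $S$ defined by $s\,\rho_x\,t\iff x{\cdot}s\,\rho\,x{\cdot}t$, and for each ordered pair $(x,y)\in X\times X$ a right ideal $I(x,y)=\{s\in S: x{\cdot}s\,\rho\,y{\cdot}t\text{ for some }t\}$; finite generating sets for the $\rho_x$ and the $I(x,y)$, together with the analogue of your cross-pair ideal $J$, are then assembled into an explicit finite generating set for $\rho$. Your argument instead observes that $\rho_A$ is an $S^1$-act congruence and invokes Proposition~\ref{acts} (every finitely generated act over a right noetherian monoid is finitely presented) to conclude that $\rho_A$ is finitely generated as an act congruence, hence also as a right congruence on $U$ since multipliers from $A$ act trivially. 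This is shorter and more conceptual, and it makes explicit the link between the construction $\mathcal{U}(S,A)$ and the act-theoretic characterisation of right noetherian monoids; the paper's version, by contrast, is self-contained and avoids the external input from \cite{Miller2}.
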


\begin{proof}
($\Rightarrow$) Since $U\!\setminus\!S=A$ is an ideal of $U,$ we have that $S$ is right noetherian by Proposition \ref{leftideal}.\par 
Let $\rho$ be the right congruence on $U$ generated by $A\times A.$
Since $U$ is right noetherian, $\rho$ is generated by some finite set $Y\subseteq A\times A.$
We claim that $A$ is generated by the finite set 
$$X=\{x\in A : (x, y)\in\overline{Y}\text{ for some }y\in A\}.$$
Let $a\in A,$ and choose $b\in A\!\setminus\!\{a\}.$  Since $a\,\rho\,b,$ there exists a $Y$-sequence
$$a=x_1u_1, y_1u_1, \dots, y_ku_k=b$$
of shortest possible length connecting $a$ and $b.$
If $u_1\in A,$ then $a=u_1=y_1u_1,$ which is a contradiction.
Hence, we have $u_1\in S^1$ and $a=x_1u_1\in XS^1.$\par
($\Leftarrow$) Let $\rho$ be a right congruence on $U,$ and let $X$ be a finite generating set for $A.$
Let $Q(X)$ denote the set of $x\in X$ such that $x{\cdot}s\,\rho\,t$ for some $s, t\in S.$
For each $x\in Q(X),$ we define a right ideal 
$$I_x=\{s\in S : x{\cdot}s\,\rho\,t\text{ for some }t\in S\}.$$
We have that $S$ is weakly right noetherian by Lemma \ref{weakly}, so $I_x$ is generated as a right ideal by some finite set $P_x.$
For each $p\in P_x,$ choose $\alpha_x(p)\in S$ such that $x{\cdot}s\,\rho\,\alpha_x(p),$ and let 
$$H=\{(x{\cdot}p, \alpha_x(p))\in A\times S : x\in Q(X), p\in P_x\}.$$ 
Now, let $R(X)$ denote the set of pairs $(x, y)\in X\times X$ with $x\neq y$ such that $x{\cdot}s\,\rho\,y{\cdot}t$ for some $s, t\in S.$
For each pair $(x, y)\in R(X),$ we define a right ideal
$$I(x, y)=\{s\in S : x{\cdot}s\,\rho\,y{\cdot}t\text{ for some }t\in S\}.$$
We have that $I(x, y)$ is generated by some finite set $P(x, y).$
For each $p\in P(x, y),$ choose $\alpha_{x, y}(p)\in S$ such that $x{\cdot}s\,\rho\,y{\cdot}\alpha_{x, y}(p),$
and let 
$$K=\{(x{\cdot}p, y{\cdot}\alpha_{x, y}(p))\in A\times A : (x, y)\in R(X), p\in P(x, y)\}.$$
Now, let $\rho_S$ be the restriction of $\rho$ to $S$.
Since $S$ is right noetherian, we have that $\rho_S$ is generated by some finite set $Y.$\par
For each $x\in X,$ we define a right congruence $\rho_x$ on $S$ by
$$s\,\rho_x\,t\iff x{\cdot}s\,\rho\,x{\cdot}t.$$
We have that $\rho_x$ is generated by some finite set $Y_x.$
Let $$Z_x=\{(x{\cdot}y, x{\cdot}y^{\prime})\in A\times A : (y, y^{\prime})\in Y_x\}.$$
We claim that $\rho$ is generated by the finite set 
$$Z=H\cup K\cup Y\cup\biggl(\bigcup_{x\in X}Z_x\biggr).$$
Let $u\,\rho\,v$ with $u\neq v.$  There are three cases.\par
\textit{Case 1}: $u, v\in S.$  Since $u\,\rho_S\, v,$ we have that $(u, v)$ is a consequence of $Y.$\par
\textit{Case 2}: $u\in A,$ $v\in S.$
We have that $u=x{\cdot}s$ for some $x\in X$ and $s\in S.$
Since $s\in I_x,$ we have that $s=ps^{\prime}$ for some $p\in P_x$ and $s^{\prime}\in S.$
We obtain $t=\alpha_x(p)s^{\prime}\in S$ from $u$ using $(x{\cdot}p, \alpha_x(p))\in H,$
and we have that $(t, v)$ is a consequence of $Y.$\par
\textit{Case 3}: $u, v\in A.$
We have that $u=x{\cdot}s$ and $v=y{\cdot}t$ for some $x, y\in X$ and $s, t\in S.$
If $x=y,$ then $s\,\rho_x\,t,$ so there exists a $Y_x$-sequence connecting $s$ and $t.$
Therefore, there clearly exists a $Z_x$-sequence connecting $u$ and $v.$\par 
Now suppose that $x\neq y.$
Since $s\in I(x, y),$ we have that $s=pr$ for some $p\in P(x, y)$ and $r\in S.$
Letting $s^{\prime}=\alpha_{x, y}(p)r\in S,$ we obtain $y{\cdot}s^{\prime}$ from $u$ using $(x{\cdot}p, y{\cdot}\alpha_{x, y}(p))\in K,$
and we have that $(y{\cdot}s^{\prime}, v)$ is a consequence of $Z_y.$
\end{proof}

\begin{remark}
The semigroup $\mathcal{U}(S, A)$ can also be viewed as the disjoint union of $S$ and infinitely many copies of the trivial semigroup.
Therefore, the finiteness of $I$ is not necessary for a disjoint union of semigroups $S=\bigcup_{i\in I}S_i$ to be right noetherian.
\end{remark}

\begin{remark}
If $S$ is regular, then $\mathcal{U}(S, A)$ is regular, 
so right noetherian regular semigroups can have infinitely many idempotents,
and Proposition \ref{inversesubsemigroup} does not generalise to regular subsemigroups.
\end{remark}

In the case that a semigroup $S_{\beta},$ $\beta\in Y,$ is actually a monoid, 
a necessary condition for $S=\mathcal{S}(Y, S_{\alpha})$ to be right noetherian is that $S_{\beta}$ is right noetherian.

\begin{prop}
\label{sofsmonoid}
Let $S=\mathcal{S}(Y, S_{\alpha})$ be a semilattice of semigroups, let $\beta\in Y,$ and suppose that $S_{\beta}$ is a monoid. 
If $S$ is right noetherian, then $S_{\beta}$ is right noetherian.
\end{prop}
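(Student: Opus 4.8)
The plan is to adapt the lifting technique used repeatedly in this paper. Let $\rho$ be a right congruence on $S_\beta$, and let $\overline{\rho}$ denote the right congruence on $S$ that it generates. Since $S$ is right noetherian, $\overline{\rho}$ is finitely generated, so by Lemma \ref{finite subset} there is a finite set $X\subseteq\rho$ generating $\overline{\rho}$. I would then aim to show that $\rho$ is already generated, as a right congruence on $S_\beta$, by this same set $X$; this immediately gives that $S_\beta$ is right noetherian, since $\rho$ was arbitrary.

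So take $a, b\in S_\beta$ with $a\,\rho\,b$ and $a\neq b$. Since $a\,\overline{\rho}\,b$, there is an $X$-sequence
$$a=x_1s_1,\; y_1s_1=x_2s_2,\; \dots,\; y_ks_k=b$$
in $S$, with $(x_i, y_i)\in\overline{X}$ and $s_i\in S^1$. The difficulty is that the multipliers $s_i$ range over all of $S^1$, not over $S_\beta^1$, so a priori this is not an $X$-sequence inside $S_\beta$. The key device is to multiply the entire sequence on the right by the identity $e$ of the monoid $S_\beta$. Because $a$, $b$ and every $x_i, y_i$ lie in $S_\beta$, they are fixed by right multiplication by $e$, and the chain of equalities is preserved, yielding
$$a=x_1(s_1e),\; y_1(s_1e)=x_2(s_2e),\; \dots,\; y_k(s_ke)=b.$$

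The crux is to verify that each new multiplier $s_ie$ lies in $S_\beta$, so that this really is an $X$-sequence in $S_\beta$. I would establish by induction that every intermediate term $x_is_i$ lies in $S_\beta$: the base case is $x_1s_1=a\in S_\beta$, and if $x_is_i\in S_\beta$, then since $x_i\in S_\beta$ and $s_i$ lies in some component $S_{\delta_i}$, disjointness of the components forces $\beta\delta_i=\beta$, that is, $\delta_i\geq\beta$; consequently $y_is_i\in S_{\beta\delta_i}=S_\beta$, which is the next intermediate term $x_{i+1}s_{i+1}$. From $\delta_i\geq\beta$ it follows that $s_ie\in S_{\delta_i}S_\beta\subseteq S_{\delta_i\beta}=S_\beta$ (and $s_ie=e\in S_\beta$ in the degenerate case $s_i=1$). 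Hence $s_ie\in S_\beta^1$ for every $i$, and the displayed chain is a genuine $X$-sequence in $S_\beta$ connecting $a$ and $b$. Thus $(a, b)$ is a consequence of $X$ in $S_\beta$, and $\rho$ is generated by the finite set $X$.

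I expect this inductive step---showing that the multipliers are pushed back into $S_\beta$ after multiplying by $e$---to be the main point of the argument; everything else is routine. It is worth stressing that this works for an arbitrary (not necessarily strong) semilattice of semigroups, since we never invoke the structure homomorphisms $\phi_{\alpha, \beta}$: the identity $e$ of $S_\beta$ does all the work, and the containment $S_\alpha S_\gamma\subseteq S_{\alpha\gamma}$ together with disjointness of the components suffices to control where the multipliers land.
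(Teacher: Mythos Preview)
Your proposal is correct and follows essentially the same approach as the paper's proof. The only cosmetic difference is that the paper inserts the identity on the left of each multiplier, writing $x_i(1_\beta s_i)$, whereas you append it on the right as $x_i(s_i e)$; both versions rest on the same inductive argument that each $s_i$ lies in a component $S_{\delta_i}$ with $\delta_i\geq\beta$, so that the adjusted multiplier lands in $S_\beta$.
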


\begin{proof}
Let $1_{\beta}$ be the identity of $S_{\beta}.$
Now let $\rho$ be a right congruence on $S_{\beta}.$
Denote by $\overline{\rho}$ the right congruence on $S$ generated by $\rho.$
Since $S$ is right noetherian, $\overline{\rho}$ is generated by some finite set $X\subseteq\rho.$
We claim that $\rho$ is generated by $X.$\par
Indeed, let $a\,\rho\,b$ with $a\neq b.$  Then $a\,\overline{\rho}\,b,$ so there exists an $X$-sequence
$$a=x_1s_1, y_1s_1=x_2s_2, \dots, y_ks_k=b,$$
where $(x_i, y_i)\in\overline{X}$ and $s_i\in S^1$ for $1\leq i\leq k.$
Therefore, we have a sequence
$$a=x_1(1_{\beta}s_1), y_1(1_{\beta}s_1)=x_2(1_{\beta}s_2), \dots, y_k(1_{\beta}s_k)=b.$$
We show that $1_{\beta}s_i\in S_{\beta}$ for all $i\in\{1, \dots, k\},$
so that $(a, b)$ is a consequence of $X,$ as required.
So, for each $i\in\{1, \dots, k\},$ let $s_i\in S_{\alpha_i}^1$ where $\alpha_i=\beta$ if $s_i=1.$
Since $a\in S_{\beta}$ and $a=x_1s_1\in S_{\beta\alpha_1}$, we have that $\beta\alpha_1=\beta.$
Since $x_2s_2\in S_{\beta\alpha_2}$ and $$x_2s_2=y_1s_1\in S_{\beta\alpha_1}=S_{\beta},$$ 
it follows that $\beta\alpha_2=\beta.$
Continuing in this way, we have that $\beta\alpha_i=\beta,$ 
and hence $1_{\beta}s_i\in S_{\beta},$ for all $i\in\{1, \dots, k\}.$
\end{proof}

We have shown that a semilattice of semigroups $\mathcal{S}(Y, S_{\alpha})$ being right noetherian does not imply that every $S_{\alpha}$ is right noetherian.
In the opposite direction, however, we have the following result.

\begin{prop}

\label{sofs}
Let $S=\mathcal{S}(Y, S_{\alpha})$ be a semilattice of semigroups.
If $Y$ is finite and each $S_{\alpha}$ is right noetherian, then $S$ is also right noetherian.
\end{prop}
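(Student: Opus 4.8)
The plan is to induct on the cardinality $n=|Y|$, which is finite by hypothesis, using the ideal-extension result Proposition \ref{idealextension} to pass from smaller to larger semilattices. The base case $n=1$ is immediate: then $S=S_\alpha$ for the unique $\alpha\in Y$, which is right noetherian by assumption. For the inductive step I would single out a \emph{maximal} element $\tau$ of $Y$ and work with $J=S\setminus S_\tau=\bigcup_{\alpha\in Y\setminus\{\tau\}}S_\alpha$.

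The key structural observations are twofold. First, $Y\setminus\{\tau\}$ is a subsemilattice of $Y$: for $\alpha,\beta\neq\tau$ we have $\alpha\beta\leq\alpha$, and $\alpha\beta=\tau$ would force $\tau\leq\alpha$, hence $\alpha=\tau$ by maximality of $\tau$, a contradiction. Consequently $J=\mathcal{S}(Y\setminus\{\tau\}, S_\alpha)$ is itself a semilattice of semigroups, all of whose components are right noetherian, and the same computation shows that $J$ is an ideal of $S$ (any product involving an element of $J$ lands in some $S_{\alpha\beta}$ with $\alpha\beta\neq\tau$). Since $|Y\setminus\{\tau\}|=n-1$, the inductive hypothesis gives that $J$ is right noetherian. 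Second, because $\tau\tau=\tau$ the set $S_\tau$ is a subsemigroup, so the Rees quotient $S/J$ has underlying set $S_\tau\cup\{0\}$ with the product of two elements of $S_\tau$ unchanged and every product meeting $J$ collapsing to $0$; that is, $S/J\cong S_\tau^{\,0}$. As $S_\tau$ is right noetherian, $S_\tau^{\,0}$ is right noetherian by Corollary \ref{adjoinzero}.

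With $J$ and $S/J$ both right noetherian, Proposition \ref{idealextension} yields that $S$ is right noetherian, completing the induction.

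I expect the only genuine content to lie in setting up the reduction correctly, namely in the choice to delete a maximal rather than a minimal element. Deleting $\tau$ maximal is precisely what simultaneously keeps the remaining index set $Y\setminus\{\tau\}$ a subsemilattice (so that $J$ is again a semilattice of semigroups on which to apply the hypothesis) and exhibits $S_\tau$ as the Rees factor $S_\tau^{\,0}$. A minimal element $\mu$ would not serve: although $S_\mu$ is an ideal, the quotient $S/S_\mu$ is still a semilattice of semigroups indexed by all of $Y$ (with a trivial $\mu$-component), so the induction would fail to reduce $|Y|$. Once the maximal-element reduction is in place, the argument is routine verification feeding into Proposition \ref{idealextension} and Corollary \ref{adjoinzero}.
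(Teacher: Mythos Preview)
Your proof is correct and takes a genuinely different route from the paper's. The paper proves the result directly: given a right congruence $\rho$ on $S$, it builds a finite generating set by combining finite generators $X_\alpha$ for each restriction $\rho|_{S_\alpha}$ with a finite set $H$ of ``transition'' pairs, one for each generator of certain right ideals $I_{\alpha,\beta}\subseteq S_\alpha$ (using that each $S_\alpha$ is weakly right noetherian). Your argument instead peels off a maximal $\tau\in Y$, observes that $J=\bigcup_{\alpha\neq\tau}S_\alpha$ is an ideal with $S/J$ equal to $S_\tau$ with a zero adjoined, and invokes Proposition~\ref{idealextension} together with the inductive hypothesis and Corollary~\ref{adjoinzero}.

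Your approach is shorter and more structural, reusing machinery already established in the paper rather than re-running a direct generating-set construction; it also makes transparent why finiteness of $Y$ is the only global hypothesis needed. The paper's direct proof, on the other hand, yields an explicit finite generating set for $\rho$ and avoids induction, which can be useful if one wants quantitative control. One small remark: the identification $S/J\cong S_\tau^{\,0}$ is exact only under the convention that $S^0$ always adjoins a \emph{new} zero; if $S_\tau$ already has a zero and one uses the convention $S_\tau^{\,0}=S_\tau$, then strictly $S/J$ is $S_\tau$ with an extra element. Either way $S_\tau$ sits inside $S/J$ with finite complement, so Theorem~\ref{large} (of which Corollary~\ref{adjoinzero} is the special case) gives what you need.
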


\begin{proof}
Let $\rho$ be a right congruence on $S.$
For each $\alpha\in Y,$ let $\rho_{\alpha}$ denote the restriction of $\rho$ to $S_{\alpha}.$
Since $S_{\alpha}$ is right noetherian, we have that $\rho_{\alpha}$ is generated by a finite set $X_{\alpha}.$
Let $Q$ denote the set 
$$\{(\alpha, \beta)\in Y\times Y : \alpha\neq\beta, a\,\rho\,b\text{ for some }a\in S_{\alpha}, b\in S_{\beta}\}.$$
For each pair $(\alpha, \beta)\in Q,$ let $I_{\alpha, \beta}$ denote the right ideal of $S_{\alpha}$ generated by the set
$$U_{\alpha, \beta}=\{a\in S_{\alpha} : a\,\rho\,b\text{ for some }b\in S_{\beta}\}.$$
Since $S_{\alpha}$ is weakly right noetherian, we have that $I_{\alpha, \beta}$ is generated by some finite subset $U_{\alpha, \beta}^{\prime}\subseteq U_{\alpha, \beta}.$
For each $u\in U_{\alpha, \beta}^{\prime},$ choose $\lambda_{\alpha, \beta}(u)\in S_{\beta}$ such that $u\,\rho\,\lambda_{\alpha, \beta}(u).$
We now define a set 
$$H=\{(u, \lambda_{\alpha, \beta}(u)) : (\alpha, \beta)\in Q, u\in U_{\alpha, \beta}^{\prime}\}.$$
We claim that $\rho$ is generated by the finite set 
$$Y=\biggl(\bigcup_{\alpha\in Y}X_{\alpha}\biggr)\cup H.$$
Indeed, let $a\,\rho\,b$ with $a\in S_{\alpha},$ $b\in S_{\beta}$ and $a\neq b.$  Set $\gamma=\alpha\beta=\beta\alpha.$\par
We claim that there exist $c\in S_{\beta}\cup S_{\gamma}$ and $d\in S_{\alpha}\cup S_{\gamma}$ such that $(a, c)$ and $(b, d)$ are consequences of $H.$
Indeed, assuming that $\alpha\neq\gamma$ (if $\alpha=\gamma,$ simply let $a=c$),
we have that $a\in I_{\alpha, \beta},$ so there exist $u\in U_{\alpha, \beta}^{\prime}$ and $s\in S_{\alpha}^1$ such that $a=us.$
Now, applying the generating pair $(u, \lambda_{\alpha, \beta}(u))\in H,$ we obtain $c=\lambda_{\alpha, \beta}(u)s\in S_{\beta}\cup S_{\gamma}.$
A similar argument proves the existence of $d.$\par
There are now two cases to consider.\par
\textit{Case 1}: $c\in S_{\beta}$ or $d\in S_{\alpha}.$
If $s\in S_{\beta},$ then $b\,\rho_{\beta}\,c,$ so $(b, c)$ is a consequence of $X_{\beta},$ and hence $(a, b)$ is a consequence of $H\cup X_{\beta}.$
Likewise, if $d\in S_{\alpha},$ then $(a, b)$ is a consequence of $H\cup X_{\alpha}.$\par
\textit{Case 2}: $c, d\in S_{\gamma}.$  
We have that $c\,\rho_{\gamma}\,d,$ so that $(c, d)$ is a consequence of $X_{\gamma},$ and hence $(a, b)$ is a consequence of $H\cup X_{\gamma}.$
\end{proof}

From Propositions \ref{sofsmonoid} and \ref{sofs} we obtain:

\begin{corollary}
\label{sofm}
A semilattice of monoids $S=\mathcal{S}(Y, S_{\alpha})$ is right noetherian if and only if $Y$ is finite and each $S_{\alpha}$ is right noetherian.
\end{corollary}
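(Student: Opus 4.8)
The plan is to assemble this corollary directly from the two preceding propositions, since each direction of the equivalence has essentially already been established; the task is merely to combine them correctly.

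For the forward direction, I would assume that $S$ is right noetherian and establish both conclusions separately. The finiteness of $Y$ follows immediately from Lemma \ref{finitesemilattice}, which already records that a right noetherian semilattice of semigroups must have a finite underlying semilattice (this in turn rests on $Y$ being a homomorphic image of $S$ together with the fact that right noetherian semigroups contain no infinite semilattices). For the second conclusion, the key observation is that because we are working with a semilattice of \emph{monoids}, every component $S_\alpha$ is a monoid, so the hypothesis of Proposition \ref{sofsmonoid} is met for every index simultaneously. I would therefore apply Proposition \ref{sofsmonoid} once for each $\alpha \in Y$ (with $\alpha$ in the role of the distinguished index $\beta$) to deduce that each $S_\alpha$ is right noetherian.

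For the converse direction, the two hypotheses---that $Y$ is finite and that each $S_\alpha$ is right noetherian---are precisely the assumptions of Proposition \ref{sofs}, which then yields directly that $S$ is right noetherian.

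Since both implications reduce to citing results proved just above, there is no substantive obstacle to overcome here; the only point requiring a moment's care is recognising that the monoid hypothesis needed for Proposition \ref{sofsmonoid} is available for \emph{every} component of a semilattice of monoids, so that the proposition may be invoked uniformly over all $\alpha \in Y$ rather than for a single distinguished factor.
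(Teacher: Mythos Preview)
Your proposal is correct and matches the paper's approach: the paper simply states that the corollary follows from Propositions~\ref{sofsmonoid} and~\ref{sofs}, and your write-up spells this out in the obvious way. If anything, you are slightly more careful than the paper in explicitly invoking Lemma~\ref{finitesemilattice} for the finiteness of $Y$, which the paper's one-line justification leaves implicit.
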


The final result of this section provides necessary and sufficient conditions for a strong semilattice of semigroups to be right noetherian.
\begin{thm}
\label{ssofs}
A strong semilattice of semigroups $S=\mathcal{S}(Y, S_{\alpha}, \phi_{\alpha, \beta})$ is right noetherian if and only if $Y$ is finite and each $S_{\alpha}$ is right noetherian.
\end{thm}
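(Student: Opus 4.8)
The plan is to treat the two directions separately, with the backward implication essentially free and the forward implication the only place where the connecting homomorphisms $\phi_{\alpha,\beta}$ play a role. For the converse (``if'') direction I would simply observe that a strong semilattice of semigroups is in particular a semilattice of semigroups, so if $Y$ is finite and every $S_\alpha$ is right noetherian, then $S$ is right noetherian directly by Proposition \ref{sofs}; no use of the $\phi_{\alpha,\beta}$ is required here.

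For the forward (``only if'') direction, suppose $S$ is right noetherian. Finiteness of $Y$ is immediate from Lemma \ref{finitesemilattice}, so it remains to show that each $S_\alpha$ is right noetherian. The idea is to realise $S_\alpha$ as a homomorphic image of a subsemigroup of $S$ to which Proposition \ref{leftideal} applies. Fix $\alpha\in Y$ and set $U_\alpha=\bigcup_{\gamma\geq\alpha}S_\gamma$. First I would check that $U_\alpha$ is a subsemigroup: if $\gamma,\delta\geq\alpha$ then $S_\gamma S_\delta\subseteq S_{\gamma\delta}$, and $\gamma\delta\geq\alpha$ since $(\gamma\delta)\alpha=\gamma(\delta\alpha)=\gamma\alpha=\alpha$. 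Next I would verify that the complement $S\setminus U_\alpha=\bigcup_{\gamma\not\geq\alpha}S_\gamma$ is an ideal, hence in particular a left ideal: if $\gamma\not\geq\alpha$ and $\delta\in Y$, then $\gamma\delta\leq\gamma$ forces $\gamma\delta\not\geq\alpha$, so both $S_\gamma S_\delta$ and $S_\delta S_\gamma$ lie in $S\setminus U_\alpha$. Proposition \ref{leftideal} then gives that $U_\alpha$ is right noetherian.

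Finally I would introduce the retraction $\psi\colon U_\alpha\to S_\alpha$ defined by $s\psi=s\phi_{\gamma,\alpha}$ for $s\in S_\gamma$ with $\gamma\geq\alpha$, and check it is a surjective homomorphism. Surjectivity is clear because $\phi_{\alpha,\alpha}$ is the identity on $S_\alpha\subseteq U_\alpha$. For the homomorphism property, given $s\in S_\gamma$ and $t\in S_\delta$ with $\gamma,\delta\geq\alpha$, one has $st=(s\phi_{\gamma,\gamma\delta})(t\phi_{\delta,\gamma\delta})\in S_{\gamma\delta}$, and applying $\phi_{\gamma\delta,\alpha}$, using that it is a homomorphism together with the composition laws $\phi_{\gamma,\gamma\delta}\phi_{\gamma\delta,\alpha}=\phi_{\gamma,\alpha}$ and $\phi_{\delta,\gamma\delta}\phi_{\gamma\delta,\alpha}=\phi_{\delta,\alpha}$, yields $(st)\psi=(s\phi_{\gamma,\alpha})(t\phi_{\delta,\alpha})=(s\psi)(t\psi)$. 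Lemma \ref{quotient} then shows that $S_\alpha$, being a homomorphic image of the right noetherian semigroup $U_\alpha$, is right noetherian, completing the proof.

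The only genuinely new ingredient beyond the general semilattice case is this retraction $\psi$, and it is precisely the point at which the ``strong'' hypothesis enters: the existence of the $\phi_{\alpha,\beta}$ is exactly what lets us project the subsemigroup $U_\alpha$ onto a single component. I expect the main thing to get right is the bookkeeping around $U_\alpha$ — confirming it is a subsemigroup with ideal complement, and checking that $\psi$ respects the twisted strong-semilattice multiplication via the composition law — while every other step reduces to results already established.
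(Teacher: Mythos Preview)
Your proof is correct, but it follows a genuinely different route from the paper's. For the forward direction the paper does not introduce $U_\alpha$ or a retraction; instead it adjoins an identity $1_\alpha$ to each $S_\alpha$, extends the connecting maps by $1_\alpha\phi'_{\alpha,\beta}=1_\beta$, and thereby obtains a strong semilattice of \emph{monoids} $T=\mathcal{S}(Y,S_\alpha^{1_\alpha},\phi'_{\alpha,\beta})$. Since $Y$ is finite, $T\setminus S$ is finite, so $T$ is right noetherian by Theorem~\ref{large}; then Corollary~\ref{sofm} (the monoid case, resting on Proposition~\ref{sofsmonoid}) gives each $S_\alpha^{1_\alpha}$ right noetherian, and Corollary~\ref{adjoinidentity} finishes. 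Your argument is arguably more direct: it exploits the structure maps $\phi_{\gamma,\alpha}$ explicitly to build a surjection $U_\alpha\to S_\alpha$ and needs only Proposition~\ref{leftideal} and Lemma~\ref{quotient}, bypassing the monoid reduction entirely. The paper's approach, on the other hand, highlights that the strong case reduces cleanly to the already-settled monoid case by a finite extension, reusing the existing hierarchy of results. One small point worth making explicit in your write-up: when $\alpha$ is the minimum of $Y$ the complement $S\setminus U_\alpha$ is empty, so Proposition~\ref{leftideal} is not literally invoked; but then $U_\alpha=S$ and there is nothing to prove.
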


\begin{proof}
The reverse implication follows from Proposition \ref{sofs}.\par
For the direct implication, suppose that $S$ is right noetherian.
We have that $Y$ is finite by Lemma \ref{finitesemilattice}.
To each $S_{\alpha}$ we adjoin an identity $1_{\alpha}.$
For each $\alpha, \beta\in Y$ with $\alpha\geq\beta,$ we extend $\phi_{\alpha, \beta}$
to a homomorphism $\phi_{\alpha, \beta}^{\prime}: S_{\alpha}^{1_{\alpha}}\to S_{\beta}^{1_{\beta}}$ by setting $1_{\alpha}\phi_{\alpha, \beta}^{\prime}=1_{\beta}.$
We now have a strong semilattice of monoids $T=\mathcal{S}(Y, S_{\alpha}^{1_{\alpha}}, \phi_{\alpha, \beta}^{\prime}).$
Since $T\!\setminus\!S$ is finite, we have that $T$ is right noetherian by Therorem \ref{large}.
It now follows from Corollary \ref{sofm} that each $S_{\alpha}^{1_{\alpha}}$ is right noetherian,
and hence each $S_{\alpha}$ is right noetherian by Corollary \ref{adjoinidentity}.
\end{proof}

\section{Free products\nopunct\\}

Let $S$ and $T$ be two semigroups with presentations $\langle X\,|\,R\rangle$ and $\langle Y\,|\,S\rangle$ respectively.
The {\em semigroup free product} of $S$ and $T$ is the semigroup defined by the presentation $\langle X, Y\,|\,R, S\rangle.$
If $S$ and $T$ are monoids, then the {\em monoid free product} of $S$ and $T$ is the monoid defined by the presentation $\langle X, Y\,|\,R, S, 1_S=1_T\rangle.$\par
In the following we provide necessary and sufficient conditions for the semigroup (resp. monoid) free product of two semigroups (resp. monoids) to be right noetherian.

\begin{thm}
\label{sfp}
Let $S$ and $T$ be two semigroups.
Then the semigroup free product of $S$ and $T$ is right noetherian if and only if both $S$ and $T$ are trivial.
\end{thm}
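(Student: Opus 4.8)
The plan is to prove the two implications separately, proving the forward implication by contraposition; I expect the reverse implication (that triviality of both factors suffices) to be the genuine work.

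For the forward implication, suppose one factor, say $T$, is non-trivial; I will show the free product $U=S\ast T$ is not right noetherian, the case of non-trivial $S$ being symmetric. I work with the normal form in which every element of $U$ is a reduced word alternating between syllables from $S$ and from $T$. Fix distinct $t,t'\in T$, fix $s\in S$ and $t_0\in T$, and for $n\geq 1$ set $p_n=(st_0)^n s$, a reduced word of length $2n+1$ ending in an $S$-syllable, so that $p_n t$ and $p_n t'$ are reduced. Let $X=\{(p_n t,\,p_n t'):n\geq 1\}$ and $\rho=\langle X\rangle$. The crux is a localisation observation: if $(p_n t,p_n t')\in\overline{X}$ and $u\in U^1$, then whether or not a reduction occurs at the junction (two adjacent $T$-syllables merge into one), the words $(p_n t)u$ and $(p_n t')u$ are reduced words of equal length that coincide in every syllable except the one in position $|p_n|+1$. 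Hence if $X'\subseteq X$ is finite with all prefixes of length at most $L$, then every step $xu\mapsto yu$ of an $X'$-sequence alters only a syllable in a position $\leq L+1$, so any two words joined by an $X'$-sequence coincide in all positions beyond $L+1$. Choosing $n$ with $|p_n|>L$, the pair $(p_n t,p_n t')$ differs exactly in position $|p_n|+1>L+1$ and so is not a consequence of $X'$. Thus no finite subset of $X$ generates $\rho$, whence $\rho$ is not finitely generated by Lemma \ref{finite subset}, and $U$ is not right noetherian.

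For the reverse implication it remains to show that $U=\{a\}\ast\{b\}$, the free product of two trivial semigroups, is right noetherian. Every element of $U$ is a reduced word that strictly alternates the idempotents $a$ and $b$, and so is determined by its first letter together with its length. Since right multiplication preserves the first letter, the right Cayley graph of $U$ splits into two disjoint rays $A^{\ast}$ (words beginning with $a$) and $B^{\ast}$ (words beginning with $b$), each a chain $\alpha_1,\alpha_2,\dots$ on which right multiplication by $a$ and by $b$ acts, according to parity, either as ``advance by one'' or as ``fix''. I then classify the right congruences using this structure. Within a single ray a related pair $(\alpha_i,\alpha_j)$ propagates forwards under the advance maps: an even gap $j-i$ forces a periodic identification of all sufficiently large indices, while an odd gap forces the collapse of an entire tail. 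A cross-ray pair $(\alpha_i,\beta_j)$ behaves similarly, either matching the two tails with a fixed offset or forcing one ray to collapse. In each case the congruence is governed by finitely much data, namely an eventual period or collapse from some threshold $N$ together with finitely many relations among the finitely many elements below $N$, and is therefore generated by finitely many pairs. Hence every right congruence on $U$ is finitely generated and $U$ is right noetherian.

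I expect the congruence classification in the reverse implication to be the main obstacle. In the forward direction I am free to choose convenient generators, and the argument reduces to the clean fact that a short defining relation can edit only one syllable near the front of a word; this immediately defeats any finite generating set. In the reverse direction, by contrast, I must control \emph{arbitrary} right congruences, and the bookkeeping needed to confirm that the ``below threshold'' data is finite and that within-ray and cross-ray relations cannot accumulate is where the care is required.
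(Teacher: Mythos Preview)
Your plan is sound in both directions, and the execution of the forward implication is essentially complete. The approaches differ from the paper's in instructive ways.

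For the forward direction, the paper takes a shorter route through \emph{weak} right noetherianity: with $a\in S$ and distinct $b,c\in T$ it sets $u_i=(ab)^iaca$ and observes that $\{u_i:i\in\mathbb{N}\}$ is a minimal generating set for the right ideal it spans, so $U$ is not even weakly right noetherian. Your argument instead builds a right congruence whose generating set has no finite subset that suffices, via the syllable-localisation principle that each rewrite $(p_nt)u\mapsto(p_nt')u$ alters only position $|p_n|+1$. Both are correct; the paper's version yields a nominally stronger conclusion, while yours stays closer to the definition and avoids invoking Lemma~\ref{weakly}.

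For the reverse direction, your two-ray decomposition $A^{\ast}\cup B^{\ast}$ and the propagation dichotomy (even gap $\Rightarrow$ eventual periodicity, odd gap or mismatched-parity cross-ray pair $\Rightarrow$ tail collapse) are correct and will carry a complete proof once the bookkeeping is written out. The paper instead refines your rays by parity: it writes $U\setminus\{e,f\}$ as a union of four subsemigroups $U_i\cong\mathbb{N}$ generated by $ef,fe,efe,fef$, discards $\{e,f\}$ via Theorem~\ref{large}, and then gets the restrictions $\rho|_{U_i}$ finitely generated for free since $\mathbb{N}$ is right noetherian, leaving only a short claim to handle the six cross-terms $X_{ij}$. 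This buys modularity (the within-ray analysis is delegated to $\mathbb{N}$) at the cost of a less transparent decomposition; your approach trades that modularity for a direct classification. Either route works, but if you want to shorten your reverse direction, splitting each ray by parity so that the ``advance'' map becomes uniform is the move the paper exploits.
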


\begin{proof}
Let $U$ be the semigroup free product of $S$ and $T.$\par 
($\Rightarrow$) Suppose that $T$ is non-trivial.  Choose $a\in S$ and $b, c\in B.$
For $i\in\mathbb{N},$ let $u_i=(ab)^iaca.$ 
Let $I$ be the right ideal of $U$ generated by the set $X=\{u_i : i\in\mathbb{N}\}.$
For any $i\in\mathbb{N},$ the element $u_i$ cannot be written as $u_ju$ for any $j\neq i$ and $u\in U,$
so $X$ is a minimal generating set for $I$ and $I$ is not finitely generated. 
Hence, we have that $U$ is not (weakly) right noetherian.\par 
($\Leftarrow$)  The semigroup $U$ has the presentation
$$\langle e, f\,|\,e^2=e, f^2=f\rangle.$$
Let $a_1=ef$, $a_2=fe$, $a_3=efe$ and $a_4=fef$, and let $U_i=\{a_i^n : n\in\mathbb{N}\}$.
Note that each $U_i$ is isomorphic to $\mathbb{N}$.
Let $U^{\prime}=\bigcup_{i=1}^4U_i$.  We shall show that $U^{\prime}$ is right noetherian.
Since $U\!\setminus\!U^{\prime}=\{e, f\}$ is finite, it then follows from Theorem \ref{large} that $U$ is right noetherian.\par
Let $\rho$ be a right congruence on $U^{\prime}$.
For each $i\in\{1, 2, 3, 4\}$, let $\rho_i$ be the restriction of $\rho$ to $U_i$.
Since $U_i$ is right noetherian, $\rho_i$ is generated by some finite set $X_i$.
We now make the following claim.
\begin{claim*}
Let $i, j\in\{1, 2, 3, 4\}$ with $i\neq j$ such that $a_i^M\,\rho\,a_j^N$ for some $M, N\in\mathbb{N}$.
\begin{enumerate}
\item If $i\in\{1, 4\}$ and $j\in\{2, 3\}$, then $a_i^m\,\rho\,a_i^M$ for all $m\geq M$ and $a_j^n\,\rho\,a_j^N$ for all $n\geq N$.
\item If $i=1, j=4$, or $i=2, j=3$, then $a_i^{M+r}\,\rho\,a_j^{N+r}$ for any $r\geq 0$.
\end{enumerate}
\end{claim*}
\begin{proof}
To prove (1), we just consider the case where $i=1$ and $j=2$.
By induction we have that $a_1^m\,\rho\,a_1^M$ for all $m\geq M$:
$$a_1^{M+k}=a_1^Ma_1^k\,\rho\,a_2^Na_1^k=a_2^Nfa_1^{k-1}\,\rho\,a_1^Mfa_1^{k-1}=a_1^{M+k-1}.$$
Similarly, we have that $a_2^n\,\rho\,a_2^N$ for all $n\geq N.$\par 
To prove (2), we just consider the case where $i=1, j=4$.  For $r\geq 0,$ we have
$$a_1^{M+r}=a_1^Ma_1^r\,\rho\,a_4^Na_1^r=a_4^{N+r},$$
as required.
\end{proof}
Let $i, j\in\{1, 2, 3, 4\}$ with $i<j$.
If $a_i^m\,\rho\,a_j^n$ for some $m, n\in\mathbb{N}$, let
$$M_{ij}=\text{min}\{m\in\mathbb{N} : a_i^m\,\rho\,a_j^n\text{ for some }n\in\mathbb{N}\},\,N_{ij}=\text{min}\{n\in\mathbb{N} : a_i^{M_{ij}}\,\rho\,a_j^n\},$$
and let $X_{ij}=\bigl\{(a_i^{M_{ij}}, a_j^{N_{ij}})\bigr\}.$
If there do not exist any $m, n\in\mathbb{N}$ such that $a_i^m\,\rho\,a_j^n$, let $X_{ij}=\emptyset$.
We claim that $\rho$ is generated by the finite set 
$$X=\Biggl(\bigcup_{i=1}^4X_i\Biggr)\cup\Biggl(\bigcup_{1\leq i<j\leq 4}X_{ij}\Biggr).$$
Indeed, let $i, j\in\{1, 2, 3, 4\}$ with $a_i^m\,\rho\,a_j^n$ for some $m, n\in\mathbb{N}$ and $a_i^m\neq a_j^n.$\par
If $i=j$, then $(a_i^m, a_j^n)$ is a consequence of $X_i.$\par
Assume that $i<j$.
Suppose first that $i=1, j\in\{2, 3\}$ or $i=2, j=4$.
We have that $a_i^m\,\rho\,a_i^{M_{ij}}$ by the above claim, 
so $(a_i^m, a_i^{M_{ij}})$ is a consequence of $X_i$, and likewise $(a_j^n, a_j^{N_{ij}})$ is a consequence of $X_j$.
Therefore, $(a_i^m, a_j^n)$ is a consequence of $X_i, X_j$ and $X_{ij}$.\par 
Now suppose that $i=1, j=4$ or $i=2, j=3$.
We have that $m=M_{ij}+r$ for some $r\geq 0$.
From the proof of (2) in the above claim, we obtain $a_j^{N_{ij}+r}$ from $a_i^m$ using $X_{ij}$.
Since $a_j^{N_{ij}+r}\,\rho\,a_j^n$, we have that $(a_j^{N_{ij}+r}, a_j^n)$ is a consequence of $X_j$.
Therefore, $(a_i^m, a_j^n)$ is a consequence of $X_{ij}$ and $X_j$.
\end{proof}

\begin{thm}
\label{mfp}
Let $M$ and $N$ be two monoids.
Then the monoid free product of $M$ and $N$ is right noetherian if and only if one of the following holds:
\begin{enumerate}
\item $M$ is right noetherian and $N$ is trivial, or vice versa;
\item both $M$ and $N$ contain precisely two elements.
\end{enumerate}
\end{thm}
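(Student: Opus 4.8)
The plan is to prove the two directions separately, reducing each to results already established.

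For sufficiency ($\Leftarrow$), I would first dispose of case (1): if $N$ is trivial then the monoid free product is isomorphic to $M$, and conversely. For case (2) I would use that a two-element monoid is isomorphic either to $\mathbb{Z}_2$ or to the two-element semilattice $\mathbf{2}=\{1,e\}$ with $e^2=e$, and split into the three resulting (unordered) possibilities. If both factors are $\mathbf{2}$, then $U=M*N$ is exactly the semigroup $\langle e,f\mid e^2=e,\,f^2=f\rangle$ of Theorem \ref{sfp} with an identity adjoined, so $U$ is right noetherian by Theorem \ref{sfp} together with Corollary \ref{adjoinidentity}. If both factors are $\mathbb{Z}_2$, then $U\cong\mathbb{Z}_2*\mathbb{Z}_2$ is the infinite dihedral group, which is polycyclic and hence noetherian. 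In the mixed case, with $N=\mathbf{2}=\{1,e\}$, I would observe that the set $I$ of reduced words containing at least one letter $e$ is a subsemigroup (in fact an ideal) whose complement $U\setminus I=\{1,a\}$ consists of the finitely many reduced words over the other factor; by Theorem \ref{large} it then suffices to prove that $I$ is right noetherian, and writing out normal forms shows that $I$ is a finite union of monogenic subsemigroups, so the argument used in the sufficiency part of Theorem \ref{sfp} carries over.

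For necessity ($\Rightarrow$), suppose $U=M*N$ is right noetherian. Since the retraction $U\to M$ collapsing $N$ to its identity is a surjective homomorphism, Lemma \ref{quotient} shows that $M$, and likewise $N$, is right noetherian; if one factor is trivial we are in case (1). So I may assume both factors are nontrivial and right noetherian, and it remains to show that neither factor has more than two elements. Assuming for contradiction that $|M|\ge 3$, I would split into two cases. If both $M$ and $N$ are groups, then $U$ is the group free product of two nontrivial groups that are not both of order two, hence contains a non-abelian free subgroup $F_2$, which is not noetherian; this contradicts Corollary \ref{subgroup}. Otherwise some factor is not a group, so by Corollary \ref{unit} it contains an element $t\ne 1$ with no right inverse, and I claim $U$ is then not even weakly right noetherian. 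Taking two distinct nonidentity elements $a_1,a_2$ of $M$, the reduced words $u_i=(a_1t)^ia_2t$ (when $t$ lies in $N$) or $u_i=(a_1c)^ia_2ct$ (when $t$ lies in $M$, with $c$ any nonidentity element of $N$) form, for $i\ge 1$, a minimal generating set of the right ideal they generate: because $t$ has no right inverse, right multiplication can never cancel the trailing $t$, so the $(i{+}1)$-st letter from the relevant factor stays pinned, forcing $u_i\ne u_jv$ for all $j\ne i$ and all $v$. This infinite minimal generating set contradicts Lemma \ref{weakly}.

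The main obstacle I anticipate is the interplay between cancellation and the combinatorics of reduced words in the necessity argument. When both factors are groups every element is invertible, so the right-ideal construction collapses (each $u_i$ becomes a right multiple of the others) and one is forced into the separate group-theoretic argument via free subgroups; conversely the combinatorial construction succeeds only once a noninvertible ``blocker'' $t$ is available. This is exactly why Corollary \ref{unit}, guaranteeing that in a right noetherian monoid a noninvertible element has no right inverse, is the pivotal input, and why the two cases partition precisely as they do. A secondary, more routine point is verifying that the ideal $I$ in the mixed sufficiency case really is a finite union of monogenic pieces so that the Theorem \ref{sfp} machinery transfers.
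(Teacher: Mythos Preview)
Your proposal is correct and follows essentially the same architecture as the paper's proof: the same three-way split for sufficiency, and the same dichotomy (invertibility available versus a non-right-invertible ``blocker'') for necessity.

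A few minor differences worth noting. In the necessity direction, the paper fixes specific non-identity elements $a\in M$ and $b,c\in N$ (with $|N|\ge 3$) and splits on whether \emph{those particular elements} are units, exhibiting $\langle ab,ac\rangle$ as a free subgroup directly; your coarser split into ``both factors are groups'' versus ``some factor is not a group'' is equally valid and lets you invoke the standard fact that a nontrivial free product of groups (not both of order two) contains $F_2$, at the cost of importing that result. Your right-ideal constructions $u_i$ are different in form from the paper's $u_i=(ba)^i cabac$ but serve the identical purpose.

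For the mixed sufficiency case $\mathbb{Z}_2*\mathbf{2}$, you pass to the cofinite ideal $I$ and assert that the Theorem~\ref{sfp} machinery transfers; the paper instead works on $U$ itself, decomposing it as $U=\bigcup_{i=1}^4 U_i$ with $U_1,U_2,U_3$ monogenic and $U_4=\{u_1^ia:i\ge 1\}$ a coset, and then carries out the congruence analysis in full. The one genuinely new wrinkle compared with Theorem~\ref{sfp} is the interaction between $U_1$ and $U_4$ coming from $a^2=1$, which forces an extra pair of generators $H=\{(u_1^M,u_1^{N+1}a),(u_1^{M+1},u_1^Na)\}$; this is exactly the ``routine point'' you flag, and it does require a short explicit argument rather than a straight appeal to Theorem~\ref{sfp}.
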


\begin{proof}
Let $U$ be the monoid free product of $M$ and $N.$
If $N$ is trivial, then $U$ is isomorphic to $M,$ so we assume that both $M$ and $N$ are non-trivial.\par 
($\Rightarrow$) Suppose that $U$ is right noetherian, and yet $|N|\geq 3.$ 
Choose non-identity elements $a\in M$ and $b, c\in N.$
We consider two cases.\par
\textit{Case 1}: $a, b, c$ are units.
We have a free subgroup $G=\langle ab, ac\rangle$ of $U.$ 
Since $G$ has subgroups of infinite rank, it is not noetherian, contradicting Corollary \ref{subgroup}.\par
\textit{Case 2}: at least one of $a, b, c$ is not a unit,
and hence not a right unit by Corollary \ref{unit}.
For $i\in\mathbb{N},$ let $u_i=(ba)^icabac.$ 
Let $I$ be the right ideal of $U$ generated by the set $\{u_i : i\in\mathbb{N}\}.$
We claim that $I$ is not finitely generated, contradicting that $U$ is (weakly) right noetherian.
Indeed, if $I$ were finitely generated, then it could be generated by a finite set $$X=\{u_i : 1\leq i\leq k\}.$$
Let $i\in\{1, \dots, k\}.$  Since at least one of $a, b, c$ is not right invertible,
it is easy to see that, for every $u\in U,$ the first $2i+2$ free factors of $u_iu$ are $ b, a, \dots, b, a, c, a;$
in particular, the factor in position $2i+1$ is $c.$
On the other hand, the factor in position $2i+1$ of $u_{k+1}$ is $b.$
Therefore, there do not exist any $i\in\{1, \dots, k\}$ and $u\in U$ such that $u_{k+1}=u_iu,$ so $I$ is not generated by $X.$\par 
($\Rightarrow$) Now suppose that $|M|=|N|=2.$
There are two monoids with two elements, namely $\mathbb{Z}_2$ and $\{0, 1\}.$\par
If $M$ and $N$ are both isomorphic to $\mathbb{Z}_2,$
then it is well known that $U$ is isomorphic to a semidirect product of $\mathbb{Z}$ and $\mathbb{Z}_2,$
so $U$ is right noetherian by Theorem \ref{sdpgroups}.\par
If $M$ and $N$ are both isomorphic to $\{0, 1\}$, then $U$ is isomorphic to $V^{1}$ where $V$ is the semigroup free product of two trivial semigroups, 
so $U$ is right noetherian by Theorem \ref{sfp} and Corollary \ref{adjoinidentity}.\par 
Finally, suppose that $M=\mathbb{Z}_2$ and $N=\{0, 1\}$.
Then $U$ has the presentation
$$\langle a, b\,|\,a^2=1, b^2=b\rangle.$$
Let $u_1=ab$, $u_2=ba$ and $u_3=bab$, and let $U_i=\{u_i^n : n\geq 0\}$.
Also, let $U_4=\{u_1^ia : i\geq 1\}$.
We have that $U=\bigcup_{i=1}^4U_i$, and each $U_i$ is isomorphic to $\mathbb{N}_0$.\par
Let $\rho$ be a right congruence on $U$.
For each $i\in\{1, 2, 3\}$, let $\rho_i$ be the restriction of $\rho$ to $U_i$.
Since $U_i$ is right noetherian, $\rho_i$ is generated by some finite set $X_i$.
Let $i, j\in\{1, 2, 3\}$ with $i<j$.
If $u_i^m\,\rho\,u_j^n$ for some $m, n\in\mathbb{N}$, let
$$M_{ij}=\text{min}\{m\in\mathbb{N} : u_i^m\,\rho\,u_j^n\text{ for some }n\in\mathbb{N}\},\,N_{ij}=\text{min}\{n\in\mathbb{N} : u_i^{M_{ij}}\,\rho\,u_j^n\},$$
and let $X_{ij}=\bigl\{(u_i^{M_{ij}}, u_j^{N_{ij}})\bigr\}.$
If there do not exist $m, n\in\mathbb{N}$ such that $u_i^m\,\rho\,u_j^n$, let $X_{ij}=\emptyset.$\par 
Suppose that $u_1^m\,\rho\,u_1^na$ for some $m, n\geq 1.$
Let $M$ be minimal such that $u_1^M\,\rho\,u_1^na$ for some $n\geq 1,$
and let $N$ be minimal such that $u_1^M\,\rho\,u_1^Na.$
By multiplying on the right by $b$ and by $u_1$, we obtain $u_1^M\,\rho\,u_1^{N+1}$ and $u_1^{M+1}\,\rho\,u_1^N$ respectively.
In particular, we have that $u_1^M\,\rho\,u_1^{M+2k}$ for all $k\in\mathbb{N}_0.$  Now let
$$H=\bigl\{(u_1^M, u_1^{N+1}a), (u_1^{M+1}, u_1^Na)\bigr\}.$$
If there do not exist $m, n\in\mathbb{N}$ such that $u_1^m\,\rho\,u_1^na$, let $H=\emptyset$.\par
We claim that $\rho$ is generated by $X\cup H$ where
$$X=\Biggl(\bigcup_{i=1}^3X_i\Biggr)\cup\Biggl(\bigcup_{1\leq i<j\leq 3}X_{ij}\Biggr).$$
Let $u\,\rho\,v$ with $u\neq v$.
If $u, v\in\bigcup_{i=1}^3U_i$, then a similar argument to the one in Theorem \ref{sfp} proves that $(u, v)$ is a consequence of $X.$
Assume therefore that $v\in U_4$, so $v=u_1^na$ for some $n\in\mathbb{N}.$  
There are two cases to consider.\par
\textit{Case 1}: $u\in U_i, i\in\{2, 3, 4\}.$
We have that $ua\in\bigcup_{i=1}^3U_i,$ $va=u_1^n\in U_1$ and $ua\,\rho\,va.$
Therefore, $(ua, va)$ is a consequence of $X.$ 
Hence, multiplying on the right by $a,$ we have that $(u, v)$ is a consequence of $X.$\par
\textit{Case 2}: $u\in U_1.$
Since $u_1^M\,\rho\,u_1^{M+2k}$ for all $k\in\mathbb{N}_0,$ we have that $u\,\rho\,u_1^r$ where $r\in\{M, M+1\},$ 
so $(u, u_1^r)$ is a consequence of $X_1.$
Using $H,$ we obtain from $u_1^r$ the element $u_1^sa$ where $s\in\{N, N+1\}.$
Finally, by the same argument as in \textit{Case 1}, we have that $(u_1^sa, v)$ is a consequence of $X.$
\end{proof}

\begin{remark}
Since the group free product coincides with the monoid free product for groups, 
it follows from Theorem \ref{mfp} that the free product of two non-trivial groups is noetherian if and only if both groups are isomorphic to $\mathbb{Z}_2.$
\end{remark}

\noindent\textbf{Acknowledgement.}
The authors would like to thank two anonymous referees for their comments and suggestions, and, particularly, for a question that led to Proposition \ref{prop:Schutz}.

\vspace{1em}


\begin{thebibliography}{99}
\bibitem{Baer}
R. Baer.  Noethersche Gruppen.  \textit{Math. Z.}, 66:269-288, 1956.
\bibitem{Bergman}
G. Bergman.  Problem list from \textit{Algebras, Lattices and Varieties}: A conference in honour of Walter Taylor, University of Colorado, 15-18 August, 2005.  
\textit{Algbra Universalis}, 55:509-526, 2006.
\bibitem{Budach}
L. Budach.  Strukter Noetherescher kommutativer Halbgruppen.  \textit{Monatsberichte Deutschen Akad. Wiss. Berlin}, 6:85-88, 1964.
\bibitem{Burris}
S. Burris and H. Sankappanavar.  \textit{A Course in Universal Algebra}.  Springer-Verlag, 1981.
\bibitem{Campbell}
C. Campbell, E. Robertson, N. Ru{\v s}kuc and R. Thomas.  On subsemigroups of finitely presented semigroups.  \textit{J. Algebra}, 180:1-21, 1996.
\bibitem{Dehornoy}
P. Dehornoy.  A cancellativity criterion for presented monoids.  \textit{Semigroup Forum}, 2018.
\bibitem{Hirsch}
K. Hirsch.  On infinite soluble groups II.  \textit{Proc. London Math. Soc.}, 44:336-344, 1938.
\bibitem{Hotzel}
E. Hotzel.  On semigroups with maximal conditions.  \textit{Semigroup Forum}, 11:337-362, 1975.
\bibitem{Kilp}
M. Kilp, U. Knauer and A. Mikhalev.  \textit{Monoids, Acts and Categories}.  Walter de Gruyter, 2000.
\bibitem{Kozhukhov1}
I. Kozhukhov.  On semigroups with minimal or maximal condition on left congruences.  \textit{Semigroup Forum}, 21:337-350, 1980.
\bibitem{Kozhukhov2}
I. Kozhukhov.  Semigroups with certain conditions on congruences.  \textit{J. Math. Sciences}, 114:1119-1126, 2003.
\bibitem{Lallement}
G. Lallement.  \textit{Semigroups and Combinatorial Applications}.  Wiley, 1979.
\bibitem{Miller1}
C. Miller.  Generators and presentations for direct and wreath products of monoid acts.  \textit{Semigroup Forum}, 2018.
\bibitem{Miller2}
C. Miller and N. Ru{\v s}kuc.  An introduction to presentations of monoid acts: quotients and subacts.  \textit{Comm. Algebra}, 2019.
\bibitem{Olshanskii}
A. Olshanskii.  \textit{Geometry of Defining Relations in Groups}.  Springer, 1991.
\bibitem{Petrich}
M. Petrich.  \textit{Inverse Semigroups}.  Wiley, 1984.
\bibitem{Thomson}
E. Robertson, N. Ru{\v s}kuc and M. Thomson.  Finite generation and presentability of wreath products of monoids.  \text{J. Algebra}, 226:382-392, 2003.
\bibitem{Robertson}
E. Robertson, N. Ru{\v s}kuc and J. Wiegold.  Generators and relations of direct products of semigroups.  \textit{Trans. Amer. Math. Soc.}, 350:2665-2685, 1998.
\bibitem{Redei}
L. R{\'e}dei.  Theorie der Endlich Erzeugbaren Kommutativen Halbgruppen.  \textit{B. G. Teubner Verlagsgesellshaft, Leipzig}, 1963.
\bibitem{Ruskuc1}
N. Ru{\v s}kuc.  On large subsemigroups and finiteness conditions of semigroups.  \textit{Proc. London Math. Soc.}, 76:383-405, 1998.
\bibitem{Ruskuc2}
N. Ru{\v s}kuc.  On finite presentability of monoids and their Sch{\"u}tzenberger groups.  \textit{Pacific J. Math}, 195:487-509, 2000.
\bibitem{Schutzenberger}
M. Sch{\"u}tzenberger.  $\mathcal{D}$-repr{\'e}sentations des demi-groupes.  \textit{C. R. Acad. Sci. Paris}, 244:1994-1996, 1957.
\end{thebibliography}
\end{document}